\definecolor{navyblue}{rgb}{0.0, 0.0, 0.5}
\newcommand{\setm}[2]{\{ #1 : #2 \}}
\newcommand{\Fix}{\mathrm{Fix}}
\DeclareMathOperator{\rank}{rank}
\DeclareMathOperator{\GL}{GL}
\DeclareMathOperator{\SL}{SL}
\DeclareMathOperator{\SO}{SO}
\DeclareMathOperator{\Spin}{Spin}
\DeclareMathOperator{\PSO}{PSO}
\DeclareMathOperator{\PGL}{PGL}
\DeclareMathOperator{\Fr}{Fr}
\DeclareMathOperator{\Rad}{Rad}
\DeclareMathOperator{\spn}{span}
\newcommand\N{\mathbb{N}}
\newcommand\Z{\mathbb{Z}}
\newcommand\Q{\mathbb{Q}}
\newcommand\R{\mathbb{R}}
\renewcommand\S{\mathbb{S}}
\newenvironment{enumerate-(a)}{\begin{enumerate}[label={\upshape (\alph*)}, leftmargin=2pc]}{\end{enumerate}}
\newenvironment{enumerate-(a)-r}{\begin{enumerate}[label={\upshape (\alph*)}, leftmargin=2pc,resume]}{\end{enumerate}}
\newenvironment{enumerate-(a)-5}{\begin{enumerate}[label={\upshape (\alph*)}, leftmargin=2pc,start=5]}{\end{enumerate}}
\newenvironment{enumerate-(A)}{\begin{enumerate}[label={\upshape (\Alph*)}, leftmargin=2pc]}{\end{enumerate}}
\newenvironment{enumerate-(A)-r}{\begin{enumerate}[label={\upshape (\Alph*)}, leftmargin=2pc,resume]}{\end{enumerate}}
\newenvironment{enumerate-(i)}{\begin{enumerate}[label={\upshape (\roman*)}, leftmargin=2pc]}{\end{enumerate}}
\newenvironment{enumerate-(i)-r}{\begin{enumerate}[label={\upshape (\roman*)}, leftmargin=2pc,resume]}{\end{enumerate}}
\newenvironment{enumerate-(I)}{\begin{enumerate}[label={\upshape (\Roman*)}, leftmargin=2pc]}{\end{enumerate}}
\newenvironment{enumerate-(I)-r}{\begin{enumerate}[label={\upshape (\Roman*)}, leftmargin=2pc,resume]}{\end{enumerate}}
\newenvironment{enumerate-(1)}{\begin{enumerate}[label={\upshape (\arabic*)}, leftmargin=2pc]}{\end{enumerate}}
\newenvironment{enumerate-(1)-r}{\begin{enumerate}[label={\upshape (\arabic*)}, leftmargin=2pc,resume]}{\end{enumerate}}
\newenvironment{itemizenew}{\begin{itemize}[leftmargin=2pc]}{\end{itemize}}
\newtheorem{theorem}{Theorem}[section]
\newtheorem{lemma}[theorem]{Lemma}
\newtheorem{corollary}[theorem]{Corollary}
\newtheorem{proposition}[theorem]{Proposition}
\theoremstyle{definition}
\newtheorem{definition}[theorem]{Definition}
\newtheorem{example}[theorem]{Example}
\theoremstyle{remark}
\newtheorem{remark}[theorem]{Remark}
\begin{document}

\title[Rotation equivalence and cocycle superrigidity]{Rotation equivalence and cocycle superrigidity}

\date{}
\author[F.~Calderoni]{Filippo Calderoni}

\address{Department of Mathematics, Rutgers University, 
Hill Center for the Mathematical Sciences,
110 Frelinghuysen Rd.,
Piscataway, NJ 08854-8019}
\email{filippo.calderoni@rutgers.edu}

 \subjclass[2010]{Primary: 03E15, 37A20, 54H05}
 \keywords{countable Borel equivalence relations, Borel reducibility, cocycle superrigidity, $S$-arithmetic groups}
\thanks{We would like to thank Alekos Kechris for pointing out Theorem~\ref{thm : superrigidity} and its relevance to this project. Also, we would like to thank Dave Morris and Alex Furman for some insightful discussions about Margulis superrigidity theorem.  Last we thank the anonymous referee for their careful reading and valuable comments.
}

\maketitle

\begin{abstract}
We analyze Euclidean spheres in higher dimensions and the corresponding orbit equivalence relations induced by the group of rational rotations from the viewpoint of descriptive set theory. It turns out that such equivalence relations are not treeable in dimension greater than $2$. Then we
show that the rotation equivalence relation in dimension \(n \geq 5\) is not Borel reducible to the one in any lower dimension. Our methods combine a cocycle superrigidity result from the works of Furman and Ioana with the superrigidity theorem for \(S\)-arithmetic groups of Margulis.
We also apply our techniques to give a geometric proof of the existence of uncountably many pairwise incomparable equivalence relations up to Borel reducibility. 
\end{abstract}


\section{Introduction}
An equivalence relation \(E\) on the standard Borel space \(X\) is said to be \emph{Borel} if \(E \subseteq X\times X\) is a Borel subset of \(X\times X\). A Borel equivalence relation \(E\) is said to be \emph{countable} if every \(E\)-equivalence class is countable. If \(E\) and \(F\) are countable Borel equivalence relations on the standard Borel spaces \(X\) and \(Y\), respectively, then a Borel map \(f \colon X \to Y\) is said to be a \emph{homomorphism} from \(E\) to \(F\) if for all \(x, y \in X\),
\[
x\mathbin{E}y\implies f(x)\mathbin{F}f(y).
\]
A \emph{weak Borel reduction} from \(E\) to \(F\) is a countable-to-one Borel homomorphism.
Whenever \(f\) satisfies the stronger property that for all \(x, y \in X\),
\[
x\mathbin{E}y \iff f(x)\mathbin{F}f(y),
\]
then
we say that
\(E\) is \emph{Borel reducible} to \(F\), that
\(f\) is a \emph{Borel reduction}, and we write \(E\leq_{B}F\).

Most of the Borel equivalence relations that we will consider in this paper arise from group actions as follows. Let \(\Gamma\) be a countable discrete group. Then a \emph{standard Borel \(\Gamma\)-space} is a standard Borel space \(X\) equipped with a Borel action \(\Gamma\times X\to X, (g, x) \mapsto g \cdot x\) of \(\Gamma\) on \(X\). We denote by \(\mathcal{R}(\Gamma\curvearrowright X)\) the corresponding \emph{orbit equivalence relation} on \(X\), whose classes are the \(\Gamma\)-orbits.  That is,
\[
\mathcal{R}(\Gamma\curvearrowright X) \coloneqq \{(x,y)\in X^2
\mid \exists g\in \Gamma\, (g\cdot x = y)\}.
\]
\noindent
Whenever \(\Gamma\) is a countable group, it is clear that \(\mathcal{R}(\Gamma\curvearrowright X)\) is a countable Borel equivalence relation. A classical result of Feldman and Moore~\cite{FelMoo}
establishes that all countable Bore equivalence relations arise in this manner. Precisely, if \(E\) is an arbitrary countable Borel equivalence relation on some standard Borel space \(X\), then there exist a countable group \(\Gamma\) and a Borel action of \(\Gamma\curvearrowright X\) such that \(E = \mathcal{R}(\Gamma\curvearrowright X)\).

The structure of the class of countable Borel equivalence relations up to Borel reducibility has been studied intesively over the last few decades.
By a classical dichotomy by Silver~\cite{Sil80} the collection of all (countable) Borel equivalence relations with uncountably many classes has a minimum element, the identity relation on real numbers, denoted by \(=_\R\). A Borel equivalence relation is called \emph{concretely classifiable} (or \emph{smooth}) if it is Borel reducible to \(=_\R\).

An example of countable Borel equivalence relation that is not concretly classifiable is the relation of eventual equality \(E_0\) on \(2^\N\), the Cantor space of binary sequences equipped with the product topology.
In fact,  Harrington, Kechris and Louveau~\cite{HarKecLou} showed that if \(E\) is a (countable) Borel equivalence relation and \(E\) is not smooth, then \(E_0\leq_B E\). Therefore,  \(E_0\) is an immediate successor of \(=_\R\) up to Borel reducibility.

At the other extreme we have the phenomenon of universality.
A countable Borel equivlence  relation \(F\) is said to be \emph{universal} if and only if \(E\leq_B F\) for every countable Borel equivalence relation \(E\).
The \emph{universal countable Borel equivalence relation} \(E_\infty\) is clearly unique up to Borel bi-reducibility, and has many natural realizations throughout mathematics.  For instance,  the orbit relation induced by shift action of \(\mathbb{F}_2\) on \(2^{\mathbb{F}_2}\) is universal.  Other examples of universal coutable Borel equivalence relations were found in~\cite{CalCla,DouJacKec, Gao00,Gao01,ThoVel99,ThoVel01}. 

While \(=_\mathbb{R}\), \(E_0\),  and \(E_\infty\) are easily seen to be linearly ordered by \(\leq_B\),  the structure of the interval \([E_0, E_\infty]\) is far from settled. 
A technical obstacle to separate \(E_0\) from other nonsmooth Borel equivalence relations was noted first by Hjorth and Kechris~\cite{HjoKec}, who showed that every countable Borel equivalence relation is Borel reducible to \(E_0\) when restricted on a comeager subset. 
Therefore, 
showing that a countable Borel equivalence relation lies strictly between \(E_0\) and \(E_\infty\) is ``generically'' hard,  and  descriptive theoretical methods alone are not enough.
The subject fluorished after the groundbreaking work of Adams and Kechris~\cite{AdaKec} who showed that the structure of countable Borel equivalence relations under Borel reducibility is extremely rich.  In fact,  there are uncountable many pairwise incomparable countable Borel equivalence relations up to Borel reducibility. Since then more results of incomparability have been discovered in the work of Thomas~\cite{Tho03APAL,Tho11, Tho12},  Hjorth and Kechris~\cite{Hjo12, HjoKec05},  and Coskey~\cite{Cos10,Cos12}.

In this paper, we use the theory of countable Borel equivalence relation to analyze the equivalence relation induced by countable groups of rotations on the spheres in higher dimensions.  A main motivation is continuing Zimmer's long-standing program of describing Lie groups and lattice actions on manifolds from the different viewpoint of descriptive set theory. 
Moreover,  our results uncover new examples of set theoretic rigidity (explained below) and are poised to shed light on the structure of the interval \([E_0, E_\infty]\). 

For \(n>1\), let \(\mathbb{S}^{n-1}\) be the (hyper)sphere in the \(n\)-dimensional Euclidean space.
As usual, we define
\(\mathbb{S}^{n-1} =\{x\in \mathbb{R}^n : ||x||=1\} \).
Since the linear transormations of \(\SO_n(\mathbb{R})\) preserves the dot product of \(\mathbb{R}^n\),  we let \(\SO_n(\mathbb{R})\) act on \(\mathbb{S}^{n-1}\) for all \(n>1\).
More specifically,  we consider the action of the countable subgroup of rational rotations \(\SO_n(\mathbb{Q})\) on \(\mathbb{S}^{n-1}\) and the induced orbit equivalence relation.  To simplify our notation,  we let 
\(\mathcal{R}_n\coloneqq \mathcal{R}(\SO_n(\mathbb{Q})\curvearrowright \mathbb{S}^{n-1})\). Since \(\SO_2(\mathbb{Q})\) is abelian, \(\mathcal{R}_2\) is easily seen to be reducible to \(E_0\).  (Every orbit equivalence relation induced by the action of a countable abelian group is necessarily hyperfinite by a result of Gao and Jackson~\cite{GaoJac}.) Moreover,  a simple argument of generic ergodicity shows that \(\mathcal{R}_2\) is not concretely classifiable,  thus it is Borel bi-reducible with \(E_0\).  Then, it is natural to investigate the complexity of \(\mathcal{R}_n\) in higher dimension.  Our discussion in Section 3 and 4 shows that for no \(n> 2\),  the orbit equivalence relation \(\mathcal{R}_n\) is Borel reducible to \(E_0\). In fact, we can derive the following stronger statement. (See Proposition~\ref{thm : antitreable >4} and Corollary~\ref{cor : antitreeable 3,4}.)

\begin{theorem}
\label{thm : not treeable}
\(\mathcal{R}_n\) is not treeable for all \(n\geq 3\). 
\end{theorem}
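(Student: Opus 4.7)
The overall plan is to exhibit, for each $n \geq 3$, a non-treeable Borel sub-equivalence relation of $\mathcal{R}_n$ and then invoke the principle (going back to Jackson--Kechris--Louveau) that treeability of a countable Borel equivalence relation descends to Borel sub-equivalence relations. The proposition case $n \geq 5$ will rest on Kazhdan's property (T) for $S$-arithmetic lattices together with the Adams--Spatzier theorem, while the corollary case $n = 3, 4$ will be deduced from the proposition via an embedding/product construction.

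For $n \geq 5$, I would fix a prime $p$ and consider the $S$-arithmetic subgroup $\Lambda_n \coloneqq \SO_n(\mathbb{Z}[1/p])$ inside $\SO_n(\mathbb{Q})$. By strong approximation $\Lambda_n$ sits as an irreducible lattice in the semisimple locally compact group $\SO_n(\mathbb{R}) \times \SO_n(\mathbb{Q}_p)$, whose total rank is at least $2$ when $n \geq 5$, so Kazhdan's theorem gives $\Lambda_n$ property (T). One verifies that the action $\Lambda_n \curvearrowright (\mathbb{S}^{n-1}, \mu)$ with $\mu$ the rotation-invariant Haar probability is essentially free (generic stabilizers are trivial) and ergodic (since $\Lambda_n$ is dense in the transitively acting $\SO_n(\mathbb{R})$). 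By Adams--Spatzier, the orbit equivalence relation $\mathcal{R}(\Lambda_n \curvearrowright \mathbb{S}^{n-1})$ is not treeable; since it is a Borel sub-equivalence relation of $\mathcal{R}_n$, neither is $\mathcal{R}_n$.

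For $n = 3, 4$, the rank is too small for a direct $S$-arithmetic argument, and the result must be derived from the higher-dimensional case. The natural strategy is to exploit the block-diagonal embedding $\SO_n(\mathbb{Q}) \times \SO_{n'}(\mathbb{Q}) \hookrightarrow \SO_{n+n'}(\mathbb{Q})$ (for instance $\SO_3 \times \SO_2 \hookrightarrow \SO_5$), which exhibits a product relation of the shape $\mathcal{R}_n \times \mathcal{R}_{n'}$ as a Borel sub-relation of $\mathcal{R}_{n+n'}$, fibered by the invariant $x \mapsto \lVert x_{1:n} \rVert$. Coupled with the Furman--Ioana cocycle superrigidity referenced in the abstract and the $S$-arithmetic Margulis superrigidity, an assumed treeing of $\mathcal{R}_3$ or $\mathcal{R}_4$ should produce (after combining with the hyperfinite factor $\mathcal{R}_2 \sim E_0$) a treeable Kazhdan sub-equivalence relation of $\mathcal{R}_5$, contradicting the proposition.

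The main obstacle will be precisely the $n = 3, 4$ step: one is forced to transfer non-treeability in a direction that is \emph{not} witnessed by Borel reducibility (the paper itself establishes $\mathcal{R}_m \not\leq_B \mathcal{R}_n$ for $m \geq 5 > n$), so no naive reduction is available. The right tool will be a cocycle-theoretic analysis via the superrigidity machinery, together with careful bookkeeping of how the sub-relation structure interacts with a hypothetical Borel treeing, rather than any direct Borel reduction between the $\mathcal{R}_n$ themselves.
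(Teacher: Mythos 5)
Your proof for $n \geq 5$ matches the paper's Proposition~\ref{thm : antitreable >4}: fix a suitable prime $p$, observe that $\Lambda_n = \SO_n(\mathbb{Z}[1/p])$ is a dense Kazhdan subgroup of $\SO_n(\mathbb{R})$ (Margulis), its translation action on $\mathbb{S}^{n-1}$ is ergodic by unique ergodicity, Adams--Spatzier gives non-treeability of $\mathcal{R}(\Lambda_n\curvearrowright\mathbb{S}^{n-1})$, and Proposition~\ref{prop : containment}~\ref{item : a} passes this up to $\mathcal{R}_n$. This part is essentially the same route as the paper.

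For $n = 3, 4$ there is a genuine gap, and the strategy you sketch cannot work as stated. You propose to assume $\mathcal{R}_3$ (say) treeable and derive a contradiction by embedding $\mathcal{R}_3 \times \mathcal{R}_2$ as a Borel sub-relation of $\mathcal{R}_5$. But this runs into two independent problems. First, the product of a treeable relation with a hyperfinite one need not be treeable: by Gaboriau's cost theory, the product of any ergodic non-hyperfinite relation with an aperiodic hyperfinite one has cost $1$ while remaining non-hyperfinite, hence is \emph{not} treeable (compare $\mathbb{F}_2\times\mathbb{Z}$). So the inference ``$\mathcal{R}_3$ treeable $\Rightarrow$ $\mathcal{R}_3 \times \mathcal{R}_2$ treeable'' is false. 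Second, and more fatally, treeability passes \emph{down} to Borel sub-equivalence relations (Proposition~\ref{prop : containment}~\ref{item : a}), not up. Exhibiting a treeable (or for that matter non-treeable) sub-relation of $\mathcal{R}_5$ yields no contradiction with the non-treeability of $\mathcal{R}_5$ itself. Your invocation of Furman--Ioana cocycle superrigidity also cannot close the gap, since that result requires property~$(\mathrm{T})$, which, as the paper notes (Zimmer), no infinite countable subgroup of $\SO_3(\mathbb{R})$ or $\SO_4(\mathbb{R})$ possesses.

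The paper's actual route for $n = 3, 4$ (Corollary~\ref{cor : antitreeable 3,4}) is direct and does not pass through $\mathcal{R}_5$: it uses the Drimbe--Vaes theorem that for $S$ a pair of odd primes, the translation action $\SO_n(\mathbb{Z}[1/S]) \curvearrowright \SO_n(\mathbb{R})$ is \emph{essentially} cocycle superrigid with countable targets, a property-$(\mathrm{T})$-free replacement for the Furman--Ioana statement. Combined with Hjorth's ``lemma for cost attained'' and the Monod--Shalom orbit-equivalence rigidity for free groups (Proposition~\ref{prop : antitreeabilityDV}), this rules out a treeing of $\mathcal{R}(\SO_n(\mathbb{Z}[1/S])\curvearrowright\mathbb{S}^{n-1})$, and hence of $\mathcal{R}_n$, for $n = 3,4$ directly. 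The decisive missing ingredient in your proposal is this property-$(\mathrm{T})$-free superrigidity input; without it no argument confined to the Kazhdan framework can reach dimensions $3$ and $4$.
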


To see why Theorem~\ref{thm : not treeable} implies that no \(\mathcal{R}_n\) is hyperfinite for \(n>2\), we recall that the class of treeable countable Borel equivalence relations is downword closed under Borel reducibility.  That is,  if \(E\) is Borel reducible to \(F\) and \(F\) is treeable, then \(E\) is treeable too.  Moreover,  it is well-known that \(E_0\) is treeable.

In light of the results of Adams and Kechris~\cite{AdaKec} and Thomas~\cite{Tho03} it seems natural to conjecture that the complexity of \(\mathcal{R}_n\) strictly increases with the dimension. While we still do not know whether \(\mathcal{R}_n\) is Borel reducible to \(\mathcal{R}_{n+1}\) for \(n>2\), we prove the following main result:

\begin{theorem}
\label{thm : main}
If \(n\geq 5\) and  \(n>m\),  then  \(\mathcal{R}_n\) is not Borel reducible to \(\mathcal{R}_m\). 
\end{theorem}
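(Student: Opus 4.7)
The plan is to argue by contradiction, combining the cocycle superrigidity result (Theorem~\ref{thm : superrigidity}) with Margulis's superrigidity theorem for $S$-arithmetic groups. Suppose that $f \colon \mathbb{S}^{n-1} \to \mathbb{S}^{m-1}$ is a Borel reduction from $\mathcal{R}_n$ to $\mathcal{R}_m$. A Borel uniformization argument produces a Borel cocycle $\alpha \colon \SO_n(\mathbb{Q}) \times \mathbb{S}^{n-1} \to \SO_m(\mathbb{Q})$ satisfying $f(g \cdot x) = \alpha(g, x) \cdot f(x)$. I would equip $\mathbb{S}^{n-1}$ with its uniform probability measure $\mu$, with respect to which the $\SO_n(\mathbb{Q})$-action is measure preserving and ergodic, and apply Theorem~\ref{thm : superrigidity} to the translation-type action $\SO_n(\mathbb{Q}) \curvearrowright (\mathbb{S}^{n-1}, \mu)$ to obtain a finite-index subgroup $\Gamma \leq \SO_n(\mathbb{Q})$, a group homomorphism $\pi \colon \Gamma \to \SO_m(\mathbb{Q})$, and a Borel $b \colon \mathbb{S}^{n-1} \to \SO_m(\mathbb{Q})$ with
\[
\alpha(g, x) = b(g \cdot x)^{-1}\, \pi(g)\, b(x) \qquad \text{for } g \in \Gamma \text{ and } \mu\text{-a.e.\ } x.
\]

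Next I would view $\Gamma$ as an $S$-arithmetic subgroup of the almost $\mathbb{Q}$-simple algebraic group $\SO_n$ (which is indeed almost simple for $n \geq 5$) and invoke Margulis's $S$-arithmetic superrigidity: after passing to a further finite-index subgroup, $\pi$ either has finite image or virtually extends to a $\mathbb{Q}$-rational morphism $\bar{\pi} \colon \SO_n \to \SO_m$ of algebraic groups. Since $\dim \SO_n = \binom{n}{2} > \binom{m}{2} = \dim \SO_m$ when $n > m$, almost simplicity of $\SO_n$ forces such a $\bar{\pi}$ to be trivial, and hence $\pi(\Gamma)$ is finite. The cocycle identity then shows that, on each of the finitely many ergodic components of the restricted action $\Gamma \curvearrowright (\mathbb{S}^{n-1}, \mu)$, the map $x \mapsto b(x) \cdot f(x)$ is essentially constant modulo the finite group $\pi(\Gamma)$. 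Consequently, $f$ sends a $\mu$-conull set into the union of finitely many $\SO_m(\mathbb{Q})$-orbits. Because $f$ is a Borel reduction, the preimage of each such orbit is a union of countably many (countable) $\mathcal{R}_n$-classes, so a $\mu$-conull subset of $\mathbb{S}^{n-1}$ would be countable---contradicting that $\mu$ has no countable conull set.

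The main obstacle I expect is the first step: applying Theorem~\ref{thm : superrigidity} to the action of $\SO_n(\mathbb{Q})$ on $\mathbb{S}^{n-1}$. Unlike the textbook setup of translation actions by lattices, $\SO_n(\mathbb{Q})$ sits \emph{densely} inside the compact real form $\SO_n(\mathbb{R})$, so $\mathbb{S}^{n-1} \cong \SO_n(\mathbb{R})/\SO_{n-1}(\mathbb{R})$ is not a quotient by a lattice. The cleanest route appears to be to realize $\SO_n(\mathbb{Q})$ as an irreducible $S$-arithmetic lattice in $\prod_{v \in S} \SO_n(\mathbb{Q}_v)$ for an appropriate finite set of places $S$, and to recognize the $\SO_n(\mathbb{Q})$-action on $\mathbb{S}^{n-1}$ as induced from a suitable adelic homogeneous space; the dimension hypothesis $n \geq 5$ is precisely what guarantees that $\SO_n$ has $\mathbb{Q}_p$-rank at least $2$ at enough non-archimedean places, which in turn is what powers both Theorem~\ref{thm : superrigidity} and the ensuing application of Margulis.
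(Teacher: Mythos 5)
Your overall architecture --- extract a cocycle, apply the Furman--Ioana superrigidity (Theorem~\ref{thm : superrigidity}) to untwist it into a homomorphism, then use Margulis' $S$-arithmetic superrigidity plus a dimension count to force finite image --- is the right skeleton, and it matches what the paper does for the \emph{free} part (Theorem~\ref{thm : irreducibility free}). But there is one genuine gap and several technical slips.

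The genuine gap is the non-free part of the target action. Your very first step, extracting a Borel cocycle $\alpha$ with $f(g\cdot x)=\alpha(g,x)\cdot f(x)$, requires the $\SO_m(\mathbb{Q})$-action on $\mathbb{S}^{m-1}$ to be free on the essential range of $f$: without freeness the choice of $\alpha(g,x)$ is not unique, and an arbitrary Borel selection need not satisfy the cocycle identity. The sphere does have a nontrivial non-free part, and nothing rules out a priori that a reduction sends a positive-measure (hence, by ergodicity, conull) set there. The paper spends all of Section~5 (Proposition~\ref{lem : nonfree}) closing exactly this hole: for $m=3$ the non-free part is countable and the argument is quick, but for $m\geq 4$ one must pass to minimal $\overline{\mathbb{Q}}$-based subspaces $V_x$, work in the projective group $\PSO_m(\mathbb{Q})$ acting on $\mathbb{P}(V)$ where the induced action \emph{is} free, and apply the cocycle reduction theorem (Theorem~\ref{thm : Thomas}). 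None of this is present in your proposal.

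Beyond that: (1) Theorem~\ref{thm : superrigidity} needs $\Gamma$ to have Kazhdan's property $\mathrm{(T)}$ and the ambient compact group to have finite (in practice trivial) fundamental group; $\SO_n(\mathbb{Q})$ is not finitely generated and does not have property $\mathrm{(T)}$, so you must replace it by an $S$-arithmetic subgroup such as $\SO_n(\mathbb{Z}[\frac{1}{5}])$ (dense and Kazhdan for $n\geq 5$) and then lift the cocycle from $\mathbb{S}^{n-1}$ up through $\SO_n(\mathbb{R})$ to $\Spin_n(\mathbb{R})$ before applying the theorem. You instead speak of ``a finite-index subgroup $\Gamma\leq \SO_n(\mathbb{Q})$,'' which is not how the reduction works (the $S$-arithmetic subgroup is of infinite index in $\SO_n(\mathbb{Q})$). (2) $\SO_n(\mathbb{Q})$ is not an $S$-arithmetic lattice in any finite product $\prod_{v\in S}\SO_n(\mathbb{Q}_v)$; $S$-arithmetic groups have bounded denominators, while $\SO_n(\mathbb{Q})$ does not. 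The realization you suggest in your last paragraph is therefore not available. (3) The reduction from ``the homomorphism virtually extends rationally or has finite image'' to ``finite image'' is more delicate than a single dimension count: the paper's Lemma~\ref{lem : Margulis} has to bound denominators prime-by-prime, using Margulis superrigidity part~(a) plus property $\mathrm{(T)}$ (to get relative compactness in $\SO_m(\mathbb{Q}_q)$ for $q\neq 5$) and part~(b) together with the dimension/almost-simplicity argument for $q=5$. Your one-line invocation of $\dim\SO_n>\dim\SO_m$ only handles the second half of this.
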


As an immediate consequence,  we obtain that all \(\mathcal{R}_{n}\) in dimension three and above are of intermediate Borel complexity.

\begin{corollary}
For all \(n\geq 2\),  the countable Borel equivalence relation \(\mathcal{R}_n\) is not universal.
\end{corollary}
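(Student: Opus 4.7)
The plan is to derive this directly from Theorem~\ref{thm : main}, which is already powerful enough to rule out universality for every $\mathcal{R}_n$ with $n\geq 2$, once one chooses the ambient dimension correctly. The key observation is that if $\mathcal{R}_n$ were universal, then every countable Borel equivalence relation would reduce to it; in particular $\mathcal{R}_N\leq_B \mathcal{R}_n$ for every $N$, and I only need to produce one index $N$ for which this contradicts Theorem~\ref{thm : main}.

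Concretely, given $n\geq 2$, I would set $N\coloneqq\max(n+1,\,5)$. Then $N\geq 5$ and $N>n$, so Theorem~\ref{thm : main} (applied with the roles ``$n$'' and ``$m$'' played by $N$ and $n$, respectively) gives $\mathcal{R}_{N}\not\leq_B \mathcal{R}_n$. On the other hand, if $\mathcal{R}_n$ were universal, then by definition $\mathcal{R}_{N}\leq_B \mathcal{R}_n$, a contradiction. Hence $\mathcal{R}_n$ is not universal.

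The individual cases break down as: for $n=2,3,4$, the witness $N=5$ handles all three uniformly; for $n\geq 5$, the witness $N=n+1$ does the job. Note that for $n=2$ one can alternatively invoke that $\mathcal{R}_2\sim_B E_0$ (already discussed in the introduction) and that $E_0<_B E_\infty$; and for $n=3,4$ one could also cite Theorem~\ref{thm : not treeable} together with the fact that $E_\infty$ is not treeable, which immediately yields $\mathcal{R}_n\not\sim_B E_\infty$ since non-treeability is a bireducibility invariant but does not by itself preclude universality—so this latter route actually does not suffice on its own, confirming that the clean way is to go through Theorem~\ref{thm : main}.

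Since no further obstacle appears, I do not expect a hard step here: the entire content is bookkeeping on the indices in Theorem~\ref{thm : main}, and there is nothing new to prove beyond what has already been established. If anything, the only thing to be careful about is matching the quantifier conventions in Theorem~\ref{thm : main} (where the big dimension is called ``$n$'') with the opposite convention in the statement of the corollary.
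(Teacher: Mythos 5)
Your main argument is correct and matches the paper's intended derivation exactly: the corollary is stated as an immediate consequence of Theorem~\ref{thm : main}, and choosing $N=\max(n+1,5)$ is precisely the index bookkeeping needed to cover all $n\geq 2$. (Minor: the aside for $n=3,4$ is garbled — since both $\mathcal{R}_n$ and $E_\infty$ are non-treeable in that range, treeability gives no separation, and the clause ``which immediately yields $\mathcal{R}_n\not\sim_B E_\infty$'' is simply false — but you already flag that this alternative route fails, so it does not affect the proof.)
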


This also gives a new instance of \emph{set theoretic rigidity}.  That is, the phenomenon when the quotient space of a certain orbit equivalence relation encodes information about the action and the acting group, and such information can be recovered by analyzing simply the Borel complexity.   Precisely,  for \(\{m,n\}\neq\{3,4\}\), our result shows that \(\mathcal{R}_m\) and \(\mathcal{R}_n\) are bi-reducible if and only if \(m=n\),  and thus \(\SO_m(\mathbb{Q}) = \SO_n(\mathbb{Q})\).

To prove Theorem~\ref{thm : main} we first concentrate on the free part of the action \(\SO_n(\mathbb{Q})\curvearrowright \mathbb{S}^{n-1}\).
When \(X\) is a standard Borel
 \(\Gamma\)-space denote by \(\Fr_\Gamma X\) the \emph{free part} of the action \(\Gamma\curvearrowright X\). That is, 
\(\Fr_\Gamma X\coloneqq \{x \in X: \forall g \neq 1_\Gamma(g \cdot x\neq x)\}\). Whenever \(\Gamma\) is clear from the context we let \(\Fr X = \Fr_\Gamma X\).
Since \(\Fr X\) is a \(\Gamma\)-invariant Borel set, we denote by \(\mathcal{R}^*_n\)  the restriction of \(\mathcal{R}_n\) to the free part. That is, \(\mathcal{R}^*_n \coloneqq \mathcal{R}(\SO_n(\Q)\curvearrowright \Fr \mathbb{S}^{n-1})\).
In Section~5 we reach the following stepping stone towards Theorem~\ref{thm : main}.

\begin{theorem}
\label{thm : irreducibility free}
If \(n\geq 5\) and \(m< n\),  then \(\mathcal{R}_n\) is not Borel reducible to \(\mathcal{R}^*_m\).  In particular,  \(\mathcal{R}_n^*\) is not Borel reducible to \(\mathcal{R}^*_m\).
\end{theorem}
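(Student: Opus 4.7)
The plan is to argue by contradiction, combining the Furman--Ioana cocycle superrigidity theorem (referenced in the acknowledgements) with Margulis superrigidity for $S$-arithmetic groups. Suppose $f\colon \mathbb{S}^{n-1}\to \Fr\mathbb{S}^{m-1}$ is a Borel reduction witnessing $\mathcal{R}_n\leq_B \mathcal{R}^*_m$. Freeness of the target action yields a Borel cocycle $\alpha\colon \SO_n(\Q)\times \mathbb{S}^{n-1} \to \SO_m(\Q)$ defined by $\alpha(g,x)\cdot f(x)=f(g\cdot x)$. The rotation-invariant probability measure $\mu$ on $\mathbb{S}^{n-1}$ is $\SO_n(\Q)$-invariant, and the action $\SO_n(\Q)\curvearrowright (\mathbb{S}^{n-1},\mu)$ is ergodic because $\SO_n(\Q)$ is dense in $\SO_n(\R)$. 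The first key step is to invoke the Furman--Ioana cocycle superrigidity theorem for $\alpha$, producing a group homomorphism $\varphi\colon \SO_n(\Q)\to \SO_m(\Q)$ and a Borel cochain $b\colon \mathbb{S}^{n-1}\to\SO_m(\Q)$ with
$$\alpha(g,x)=b(g\cdot x)\,\varphi(g)\,b(x)^{-1}\qquad \text{for }\mu\text{-a.e.\ }x\text{ and every }g\in\SO_n(\Q).$$

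The second step is to apply Margulis superrigidity to $\varphi$ restricted to a suitable $S$-arithmetic lattice. Fix a prime $p$ so that $\Lambda\coloneqq \SO_n(\Z[1/p])$ is an irreducible lattice of $S$-rank at least $2$ in $\SO_n(\R)\times \SO_n(\Q_p)$; this is available for any $n\geq 5$ since the form $\sum x_i^2$ has sufficiently large Witt index over $\Q_p$. By the Margulis normal subgroup theorem, the normal subgroup $\ker(\varphi\upharpoonright\Lambda)\trianglelefteq\Lambda$ is either finite or of finite index. In the first case, $\varphi\upharpoonright \Lambda$ is virtually injective, and Margulis superrigidity extends it to a nontrivial algebraic morphism $\SO_n\to \SO_m$ over $\Q$; but no such morphism exists for $n\geq 5$ and $m<n$, because the minimal dimension of a nontrivial projective representation of $\SO_n$ equals $n$ (the low-dimensional exceptions $n=5,6$ are handled via the isogenies $\Spin_5\cong \mathrm{Sp}_4$ and $\Spin_6\cong \SL_4$). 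Hence $\varphi\upharpoonright \Lambda$ has finite image, so $\Lambda_0\coloneqq \ker(\varphi\upharpoonright\Lambda)$ has finite index in $\Lambda$.

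The final step is an ergodicity contradiction. Put $\tilde f(x)\coloneqq b(x)^{-1}\cdot f(x)$; the cohomology identity becomes $\tilde f(g\cdot x)=\varphi(g)\cdot\tilde f(x)$ for $\mu$-a.e.\ $x$ and every $g$, so $\tilde f$ is $\Lambda_0$-invariant on a $\mu$-conull set. As $\Lambda_0$ has finite index in $\Lambda$ and $\Lambda$ is dense in $\SO_n(\R)$, the subgroup $\Lambda_0$ is also dense in $\SO_n(\R)$, whence $\Lambda_0\curvearrowright(\mathbb{S}^{n-1},\mu)$ is ergodic and $\tilde f$ is $\mu$-almost surely constant. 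Consequently $f$ takes only countably many values on a $\mu$-conull set, so a conull subset of $\mathbb{S}^{n-1}$ is mapped into a single $\mathcal{R}^*_m$-class; this contradicts $f$ being a reduction, since every $\mathcal{R}_n$-class is $\mu$-null (the countable group $\SO_n(\Q)$ acts on the continuous space $(\mathbb{S}^{n-1},\mu)$) and any conull set must therefore meet uncountably many $\mathcal{R}_n$-classes. The ``in particular'' statement is immediate: the entire argument goes through verbatim for any hypothetical reduction $\mathcal{R}^*_n\leq_B \mathcal{R}^*_m$ after restricting to the $\mu$-conull free part $\Fr\mathbb{S}^{n-1}$.

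The hardest point will be making the cocycle superrigidity step rigorous: $\SO_n(\Q)$ is not itself a lattice in a semisimple Lie group but only a directed union of $S$-arithmetic lattices, so one cannot simply quote a superrigidity theorem stated for lattices acting on probability spaces. Instead one must either set up the Furman--Ioana theorem in the ``compact-ambient'' situation of $\SO_n(\Q)\curvearrowright\mathbb{S}^{n-1}$ or extract the required homomorphism $\varphi$ by a careful combination of cocycle superrigidity for each $\SO_n(\Z[1/p])$ separately, checking compatibility on intersections. A secondary subtlety is the representation-theoretic verification at $n=5,6$, where the exceptional isogenies must be used to rule out any nontrivial algebraic morphism $\SO_n\to \SO_m$.
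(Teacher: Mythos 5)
Your overall plan---cocycle superrigidity, then Margulis superrigidity to constrain the resulting homomorphism, then an ergodicity contradiction---is the right one, and your final ergodicity step (density of a finite-index subgroup in the connected compact group $\SO_n(\R)$) is a clean variant of what the paper does.  But there is a genuine gap at the very step you yourself flag as ``the hardest point,'' and the resolution is actually simpler than either alternative you sketch.

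The Furman--Ioana theorem (Theorem~\ref{thm : superrigidity}) requires the acting group $\Gamma$ to have Kazhdan's property $\mathrm{(T)}$, and $\SO_n(\Q)$ does not: it is not even finitely generated.  You propose either to ``set up'' the theorem for $\SO_n(\Q)\curvearrowright \mathbb{S}^{n-1}$ directly, or to glue together applications to each $\SO_n(\Z[1/p])$.  Neither is needed, and the gluing idea in particular faces real compatibility problems.  The paper's move is to abandon the full group at the outset: fix $\Gamma = \SO_n(\Z[1/5])$, observe that a Borel reduction $f$ from $\mathcal R_n$ to $\mathcal R_m^*$ is in particular a \emph{countable-to-one Borel homomorphism} from the subrelation $\mathcal R(\Gamma\curvearrowright\mathbb S^{n-1})$ to $\mathcal R_m^*$, and note that $\Gamma$ is dense in $\SO_n(\R)$ and hence acts ergodically.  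This already provides a cocycle $\Gamma\times \mathbb{S}^{n-1}\to\SO_m(\Q)$ to which (after a further adjustment---see below) Theorem~\ref{thm : superrigidity} applies.  The ergodicity contradiction at the end only needs $f$ to be countable-to-one, not a full reduction, so nothing is lost by forgetting the rest of $\SO_n(\Q)$.  Without some version of this reduction you do not have a theorem to quote, and the proof does not get off the ground.

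Two further points where the proposal is thin.  First, even with $\Gamma=\SO_n(\Z[1/5])$ in hand, Theorem~\ref{thm : superrigidity} has a hypothesis on $\pi_1(G)$: every homomorphism $\pi_1(G)\to\Lambda$ must be trivial.  With $G=\SO_n(\R)$ one has $\pi_1(G)\cong\Z/2$, and $\SO_m(\Q)$ certainly contains elements of order $2$, so the hypothesis fails.  The paper lifts the cocycle to the universal cover $\Spin_n(\R)$, where $\pi_1$ is trivial, and works with $\widetilde\Gamma = p^{-1}(\Gamma)$ (still property $\mathrm{(T)}$, still $S$-arithmetic by Proposition~\ref{prop : covering}).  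Your proposal never mentions this lift, and without it the superrigidity theorem simply does not apply.  Second, your appeal to Margulis superrigidity is schematic: you assert that the virtual-embedding case produces ``a nontrivial algebraic morphism $\SO_n\to\SO_m$ over $\Q$,'' but the $S$-arithmetic superrigidity theorem produces morphisms over local fields $\Q_q$ and only after a place-by-place analysis (taking Zariski closures, quotienting by the radical, using amenability and property $\mathrm{(T)}$ for $q\notin S$, and the triviality of $\Q_5$-morphisms $\Spin_n\to\SO_m$ for $q\in S$).  The paper carries this out carefully in Lemma~\ref{lem : Margulis} to conclude that the image of any homomorphism $\widetilde\Gamma\to\SO_m(\Q)$ is finite; the representation-theoretic ``minimal projective dimension'' argument you sketch is a plausible intuition but is not what makes the case analysis close, and it says nothing at the places $q\neq 5$ where one must rule out unbounded denominators.
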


For dimension three we have a better understanding of the nonfree part of the action \(\SO_m(\Q)\curvearrowright \mathbb{S}^{m-1}\).  So, using Theorem~\ref{thm : irreducibility free}, we can derive Theorem~\ref{thm : main} for all \(n\geq 5\) and \(m=3\) directly. However,  analyzing the non-free part of the action of \(\SO_m(\mathbb{Q})\) for \(m\geq 4\) is much more subtle and accounts
for most of Section~5.

Our proofs combine techniques from ergodic theory such as the analysis of cocycles associated to Borel homomorphisms and free actions,  Kazhdan property \((\mathrm{T})\), and Margulis' superrigidity results~\cite{Mar}. The main tool is a cocycle rigidity result for compact actions which follows directly from the work of Furman~\cite{Fur11} and Ioana~\cite{Ioa16}.  It should be pointed out that their results cover only groups with Kazhdan' s property \(\mathrm{(T)}\). Initially, this might seem the main difficulty to deal with the sphere of dimension four because \(\SO_4(\mathbb{Q})\) does not contain any subgroup with such property.
However,  Drimbe and Vaes~\cite{DriVae} recently extended the results of cocycle superrigidity for the actions of dense subgroups of Lie groups on homogeneous spaces  without relying on property \(\mathrm{(T)}\).   Nevertheless, the case \(n=4\) is more subtle for purely algebraic reasons and it remains open whether \(\mathcal{R}_4\) is Borel reducible to \(\mathcal{R}_3\).

In the last section we discuss some further application of our methods.  We prove the existence of continuum many pairwise incomparable subrelations of \(\mathcal{R}_n\), for any \(n\geq 5\).

\section{Preliminaries}

\subsection{Ergodic theory and descriptive set theory}

Suppose that \(\Gamma\) is a countable group and that \(X\) is a standard Borel \(\Gamma\)-space. If \(\mu\) is a \(\Gamma\)-invariant probability measure on \(X\), then the action of \(\Gamma\) on \((X, \mu)\) is said to be \emph{ergodic} if for every \(\Gamma\)-invariant Borel subset \(A \subseteq X\), either \(\mu(A) = 0\) or \(\mu(A) = 1\).

\begin{theorem}
Let \(\mu\) be a \(\Gamma\)-invariant probability measure on the standard Borel \(\Gamma\)-space \(X\), then the following statements are equivalent:
\begin{enumerate-(i)}
\item
The action of \(\Gamma\) on \((X, \mu)\) is ergodic.
\item
If \(Y\) is a standard Borel space and \(f \colon X \to Y\) is a \(\Gamma\)-invariant Borel function, then there is
a \(\Gamma\)-invariant Borel subset \(M \subseteq X\) with \(\mu(M) = 1\) such that \(f\restriction M\) is a constant function.
\end{enumerate-(i)}
\end{theorem}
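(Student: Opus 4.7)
The plan is to prove the two implications separately. The direction (ii)$\Rightarrow$(i) is the quick one. Given a $\Gamma$-invariant Borel set $A \subseteq X$, the characteristic function $\chi_A \colon X \to \{0,1\}$ is a $\Gamma$-invariant Borel function. Applying (ii), there is a $\Gamma$-invariant Borel set $M$ with $\mu(M)=1$ on which $\chi_A$ is constant, equal to either $0$ or $1$. In the first case $A \cap M = \emptyset$, so $\mu(A) \leq \mu(X\setminus M)=0$; in the second $M \subseteq A$, so $\mu(A)\geq \mu(M)=1$. Either way $\mu(A)\in\{0,1\}$.

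For (i)$\Rightarrow$(ii), the strategy is to replace the abstract target $Y$ by something concrete. Since $Y$ is a standard Borel space, I may assume without loss of generality that $Y$ is a Polish space; fix a countable basis $\{U_n\}_{n\in\N}$. For each $n$ the preimage $A_n \coloneqq f^{-1}(U_n)$ is a $\Gamma$-invariant Borel subset of $X$ because $f$ is $\Gamma$-invariant, so by ergodicity $\mu(A_n) \in \{0,1\}$. Let
\[
S \coloneqq \set{n \in \N : \mu(A_n) = 1},\qquad M \coloneqq \bigcap_{n \in S} A_n \;\cap\; \bigcap_{n \notin S} (X\setminus A_n).
\]
Then $M$ is a $\Gamma$-invariant Borel set with $\mu(M) = 1$, and for every $x \in M$ the point $f(x)$ lies in $U_n$ precisely when $n \in S$.

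The remaining task is to show $f\restriction M$ is constant. Since $\mu(M) = 1 > 0$, the set $M$ is nonempty, so the set $C \coloneqq \bigcap_{n\in S} U_n \cap \bigcap_{n\notin S}(Y\setminus U_n)$ contains $f(M)$ and is therefore nonempty. On the other hand, because $\{U_n\}$ is a basis of a Hausdorff space, any two distinct points of $Y$ are separated by some $U_n$, so $C$ contains at most one point. Hence $C=\set{y_0}$ and $f$ is constantly equal to $y_0$ on $M$, which is exactly what (ii) requires.

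The only real subtlety is the reduction to a Polish $Y$; this is standard (every standard Borel space is Borel isomorphic to a Polish space, in fact to a Borel subset of $\R$), and if one prefers to avoid it, one can instead fix any countable family of Borel sets generating the $\sigma$-algebra of $Y$ and separating points, and run the same argument. No other step presents a genuine obstacle.
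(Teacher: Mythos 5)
Your proof is correct and is the standard textbook argument for this classical equivalence; the paper states the theorem as background without giving a proof, so there is no alternative paper argument to compare against. Both directions are handled properly, and the reduction to a Polish \(Y\) with a countable separating basis is exactly the right device for pinning down the constant value.
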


When \(\Gamma\) is a countable discrete group and \(\Lambda<\Gamma\) is a subgroup, the following are equivalent.

\begin{enumerate-(1)}
\item
\([\Gamma: \Lambda]<\infty\).
\item
\label{component : 2}
There exists a constant \(c>0\) such that for any measure-preserving ergodic action of \(\Gamma\) on a standard probability space \((X,\mu)\) there is a measurable \(\Lambda\)-invariant subset \(Z\subseteq X\) with \(\mu(Z)\geq c\) and \(\Lambda\) acts ergodically on \((Z, \mu_{Z})\), where \(\mu_{Z}\) is the probability measure defined by \(\mu_{Z}(A) = \mu(A)/\mu(Z)\).
\end{enumerate-(1)}

When \(Z\) is as in clause~\ref{component : 2} we say that \(Z\) is an \emph{ergodic component} for the action of \(\Lambda\) on \(X\).

Suppose that \(\Gamma\) is a countable group and that \(X\) is a standard Borel \(\Gamma\)-space with an invariant probability measure \(\mu\). Until further notice let \(E=\mathcal{R}(\Gamma\curvearrowright X)\) and \(F\) be a Borel equivalence relation on a standard Borel space \(Y\). Then \((E,\mu)\) is said to be \emph{\(F\)-ergodic} if for every Borel homomorphism \(f \colon X \to Y\) from \(E\) to \(F\), there exists a \(\Gamma\)-invariant Borel subset \(M \subseteq X\) with \(\mu(M ) = 1\) such that \(f\) maps \(M\) into a single \(F\)-class. In this case, for simplicity, we will usually say that \(E\) is \(F\)-ergodic.

Notice that ergodicity is a strong obstruction to Borel reducibility. When the action of \(\Gamma\) on \((X,\mu)\) is ergodic, then \(E\) is not Borel reducible to \(=_{\R}\). Moreover, provided that \(E\) is \(F\)-ergodic, then \(E\) is not weakly Borel reducible to \(F\). In particular, \(E\) is not Borel reducible to \(F\).

Recall that an equivalence relation relation \(F\) is said to be \emph{finite} if every \(F\)-equivalence class is finite.
A countable Borel equivalence relation \(E\) on the standard Borel space \(X\) is said to be hyperfinite if there exists an increasing sequence
\[F_{0} \subseteq F_{1} \subseteq\dotsb \subseteq F_{n} \subseteq\dotsb\]
of finite Borel equivalence relations on \(X\) such that \(E = \bigcup_{n\in \N} F_{n}\). 

Next proposition is the main technique to prove that an orbit equivalence relation is not hyperfinite. First, recall that a standard Borel \(\Gamma\)-space is said to be \emph{free} if the associated \(\Gamma\)-action is free.

\begin{proposition}
\label{prop : amenable}
Let \(\Gamma\) be a countable nonamenable group,  let \(X\) be a free standard Borel \(\Gamma\)-space and \(\mu\) be a \(\Gamma\)-invariant probability measure on \(X\). Then
\(\mathcal{R}(\Gamma\curvearrowright X)\) is not hyperfinite.
\end{proposition}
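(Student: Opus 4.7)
The strategy is to prove the contrapositive: if $E = \mathcal{R}(\Gamma \curvearrowright X)$ is hyperfinite, then $\Gamma$ must be amenable. Let $F_0 \subseteq F_1 \subseteq \cdots$ be finite Borel equivalence relations with $\bigcup_n F_n = E$. Freeness of the action lets us encode each $F_n$-class as a finite subset of $\Gamma$: for each $x \in X$ set
\[
A_n(x) \coloneqq \{ g \in \Gamma : (gx, x) \in F_n \},
\]
a finite set containing $1_\Gamma$. Using only that $F_n$ is an equivalence relation, one verifies the cocycle-like identity $A_n(hx) = A_n(x) h^{-1}$ whenever $h \in A_n(x)$.

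For each $n$, define a probability measure $\nu_n$ on $\Gamma$ by
\[
\nu_n(\{g\}) \coloneqq \int_X \frac{\mathbf{1}_{A_n(x)}(g)}{|A_n(x)|}\, d\mu(x),
\]
the ``averaged $F_n$-class viewed from a random basepoint.'' The heart of the argument is to show that for every fixed $h \in \Gamma$,
\[
\| h \cdot \nu_n - \nu_n \|_1 \;\leq\; \int_X \frac{|\, h A_n(x) \,\triangle\, A_n(x)\, |}{|A_n(x)|}\, d\mu(x) \;\longrightarrow\; 0.
\]
The integrand at $x$ is, up to a constant factor, the proportion of $y \in [x]_{F_n}$ for which $(hy, y) \notin F_n$. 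Since $\bigcup_n F_n = E$, the set $B_n^h \coloneqq \{ y \in X : (hy, y) \notin F_n\}$ satisfies $\mu(B_n^h) \to 0$; a disintegration argument, justified by the observation that $\mu$ is preserved by every partial Borel selector within $E$ (a consequence of $\Gamma$-invariance of $\mu$ combined with freeness), then bounds the integral by $2\mu(B_n^h)$, yielding the displayed limit.

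Given asymptotic left-invariance of $(\nu_n)$, any weak-$*$ accumulation point of the associated means in $(\ell^\infty(\Gamma))^*$ is left-$\Gamma$-invariant, so $\Gamma$ is amenable, contradicting the hypothesis. The main obstacle is the integral estimate: the $F_n$ are \emph{not} assumed $\Gamma$-invariant as subequivalence relations of $E$ (only their union is), so the estimate cannot be deduced from any equivariance of the $F_n$ themselves; instead one must rely purely on the combinatorics of $A_n(x)$ and on $E$-invariance of $\mu$.
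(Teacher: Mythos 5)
Your proof is correct, and it is essentially the standard argument for this well-known fact (the ``only if'' direction of the characterization: a free p.m.p.\ action of a countable group has hyperfinite orbit equivalence relation iff the group is amenable, part of the circle of ideas around Zimmer, Ornstein--Weiss, and Connes--Feldman--Weiss). The paper itself states Proposition~2.4 without proof, citing it as known, so there is no proof in the text to compare against.

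All the steps check out. The sets $A_n(x)$ are finite by finiteness of $F_n$-classes plus freeness, the compatibility relation $A_n(hx)=A_n(x)h^{-1}$ for $h\in A_n(x)$ follows from transitivity of $F_n$, each $\nu_n$ is a genuine probability measure on $\Gamma$, and the $\ell^1$ bound
\[
\lVert h\cdot\nu_n - \nu_n\rVert_1 \leq \int_X \frac{\lvert hA_n(x)\,\triangle\,A_n(x)\rvert}{\lvert A_n(x)\rvert}\,d\mu(x)
\]
is a correct application of the triangle inequality. Unwinding the symmetric difference, it counts (after identifying $A_n(x)$ with $[x]_{F_n}$ via $g\mapsto gx$) the elements $y\in[x]_{F_n}$ with $(hy,y)\notin F_n$ together with those with $(h^{-1}y,y)\notin F_n$. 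The mass-transport identity $\int_X \lvert [x]_{F_n}\cap B\rvert/\lvert[x]_{F_n}\rvert\,d\mu(x)=\mu(B)$ then bounds the integral by $\mu(B_n^h)+\mu(B_n^{h^{-1}})=2\mu(B_n^h)$, and $\mu(B_n^h)\to 0$ since the $B_n^h$ decrease to $\emptyset$. Passing to a weak-$*$ cluster point of the associated means gives a left-invariant mean, so $\Gamma$ is amenable.

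One small imprecision: you say the mass-transport/disintegration step uses ``$\Gamma$-invariance of $\mu$ combined with freeness.'' Freeness plays no role there; $\mu$ is $E$-invariant (i.e., preserved by every partial Borel isomorphism with graph in $E$) whenever $\mu$ is $\Gamma$-invariant and $E=\mathcal{R}(\Gamma\curvearrowright X)$, by the usual Luzin--Novikov decomposition of such a partial isomorphism into countably many pieces of the graphs of group elements. Freeness is needed only where you actually invoke it: to make $g\mapsto gx$ injective so that $A_n(x)$ is finite and in bijection with $[x]_{F_n}$, which is what lets $\nu_n$ be well-defined and the combinatorics transfer between $\Gamma$ and $X$.
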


A long-standing open problem in descriptive set theory asks whether
any orbit equivalence relation generated by a Borel
action of a countable amenable group is hyperfinite. Below we will mention the following important result by Gao and Jackson.

\begin{theorem}[Gao--Jackson~\cite{GaoJac}]
\label{thm : GaoJac}
If \(\Gamma\) is a countable abelian group and \(\Gamma\) acts on a standard Borel  \(X\) in a Borel fashion, then \(\mathcal{R}(\Gamma\curvearrowright X)\) is hyperfinite.
\end{theorem}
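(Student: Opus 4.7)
The plan is to reduce to Weiss's theorem for $\mathbb{Z}^{k}$-actions and then carry out a marker-gluing construction to pass from finitely generated to arbitrary countable abelian $\Gamma$. Write $\Gamma = \bigcup_{n\in\N} \Gamma_n$ as an increasing union of finitely generated subgroups, so that by the structure theorem each $\Gamma_n \cong \mathbb{Z}^{k_n} \oplus T_n$ with $T_n$ a finite abelian group. The finite summands $T_n$ contribute only finite $T_n$-orbits, which are absorbed into a smooth subequivalence relation, so it suffices to handle the free parts. Setting $E_n := \mathcal{R}(\Gamma_n \curvearrowright X)$, we have an increasing union $E = \bigcup_n E_n$, and the task reduces to representing $E$ itself as an increasing union of finite Borel equivalence relations.

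For each fixed $n$ I would invoke Weiss's theorem that Borel $\mathbb{Z}^{k_n}$-actions generate hyperfinite equivalence relations. Its proof rests on a Borel Ornstein--Weiss tower: one picks a Borel marker set $M \subseteq X$ which meets every $\Gamma_n$-orbit in a subset that is syndetic (so tiles have bounded size) and separated (so tile interiors are disjoint), and then the Voronoi-type tiling around $M$ partitions each orbit into finite Borel cells, giving a finite Borel subequivalence relation. Running through a sequence of such markers with increasing density yields $E_n = \bigcup_k F_n^k$ with each $F_n^k$ finite Borel.

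The main obstacle is to assemble these tilings into a single increasing sequence of finite Borel equivalence relations witnessing hyperfiniteness of $E$ itself. A priori the $\Gamma_{n+1}$-tiling need not refine the $\Gamma_n$-tiling: a single $\Gamma_{n+1}$-orbit contains countably many $\Gamma_n$-suborbits whose marker geometries are determined by different lattices and may conflict. The plan is to construct, by induction on $n$, an \emph{orthogonal} family of markers in which the stage-$n$ tiling both refines every tiling from previous stages as a Borel equivalence relation and respects the lattice structure of $\Gamma_n$ up to a quantitatively controlled error. Producing such a coherent family of Borel markers that simultaneously handles all the lattices $\mathbb{Z}^{k_n}$ is the technical heart of the argument, and the difficulty is purely combinatorial-geometric: one must juggle Borel tiles of wildly different scales inside a single standard Borel space while maintaining finiteness at each stage. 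Once the orthogonal markers are in hand, the diagonal sequence $F_k := F_k^k$ is an increasing sequence of finite Borel subrelations of $E$ whose union is $E$, completing the proof.
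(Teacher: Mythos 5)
The paper does not prove this theorem; it is cited from Gao--Jackson~\cite{GaoJac}, so there is no in-paper argument to compare against. Evaluated on its own terms, your proposal correctly identifies the high-level strategy: write $\Gamma = \bigcup_n \Gamma_n$ with each $\Gamma_n \cong \mathbb{Z}^{k_n} \oplus T_n$ finitely generated, reduce to the $\mathbb{Z}^{k_n}$-actions via the finite-index observation (the passage about ``absorbing'' the finite summands into a smooth subrelation is loosely phrased --- what one actually uses is that a finite-index extension of a hyperfinite relation is hyperfinite, cf.\ Jackson--Kechris--Louveau --- but the idea is right), handle each $\mathbb{Z}^{k_n}$-action by Borel marker/tiling arguments, and then try to assemble a single increasing sequence of finite Borel subrelations of $E$. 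You also correctly pinpoint the obstruction: it is \emph{not} known in general that an increasing union of hyperfinite countable Borel equivalence relations is hyperfinite (this is the still-open ``union problem''), so the stage-$n$ tilings cannot be built independently and spliced after the fact; they must cohere by construction.

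The genuine gap is that the proposal stops exactly where the proof begins. The sentence ``The plan is to construct, by induction on $n$, an orthogonal family of markers in which the stage-$n$ tiling both refines every tiling from previous stages\ldots\ Producing such a coherent family of Borel markers\ldots\ is the technical heart of the argument'' names the goal but supplies no construction. The orthogonal-marker machinery \emph{is} the content of Gao and Jackson's paper: one must arrange, Borel-uniformly across all orbits simultaneously, a hierarchy of rectangular regions at widely separated scales for each $\mathbb{Z}^{k_n}$, with quantitative control on how boundaries at successive scales interact, and then verify that the induced cell decompositions are genuinely nested and that every $E$-related pair is eventually captured. Nothing in the proposal establishes that the diagonal sequence $F_k := F_k^k$ is increasing (this is precisely what the orthogonality is supposed to guarantee and is not a triviality), nor that $\bigcup_k F_k = E$ rather than a proper subrelation. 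As written this is an accurate description of the problem and a plausible outline of the known strategy, but it is not a proof.
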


A \emph{graphing} of an equivalence relation is a graph whose connected
components coincide with the equivalence classes.  We say that a countable
Borel equivalence relation \(E\) on a standard Borel space is \emph{treeable} if
there is a Borel acyclic graphing of \(E\). 
If \(\mathbb{F}_n\) is the free group on \(n\) generators for some \(n\in \mathbb{N}\cup\{\infty\}\),  and \(\mathbb{F}_n\) acts freely on the standard Borel space \(X\), then \(\mathcal{R}(\mathbb{F}_n\curvearrowright X)\) is treeable. 

The class of treeable countable Borel equivalence relations admits a universal element, which si defined as the orbit equivalence relation \(\mathcal{R}(\mathbb{F}_2\curvearrowright \Fr 2^{\mathbb{F}_2})\) induced by the shift action of \(\mathbb{F}_2\) on the free part of \(2^{\mathbb{F}_2}\coloneqq\big\{x\mid x\colon \mathbb{F}_2\to \{0,1\}\big\}\).
Clearly,  \(\mathcal{R}(\mathbb{F}_2\curvearrowright \Fr 2^{\mathbb{F}_2})\) is not hyperfinite for Proposition~\ref{prop : amenable}.

The following are some well-known properties of hyperfinite and treeable equivalence relations. (E.g., see~\cite[Proposition~1.3--3.3]{JacKecLou}.) An immediate consequence is that every hyperfinite equivalence relation is treeable.

\begin{proposition}
\label{prop : containment}
Let \(E\) and \(F\) be countable Borel equivalence relations.
\begin{enumerate-(a)}
\item
\label{item : a}
If \(E \subseteq F\) and \(F\) is hyperfinite (respectively treeable), then \(E\) is hyperfinite (respectively treeable).
\item
\label{item : b}
If \(E \leq_B F\) and \(F\) is hyperfinite (respectively treeable),  then \(E\) is hyperfinite (respectively treeable).
\end{enumerate-(a)}
\end{proposition}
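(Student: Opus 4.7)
I plan to handle the two clauses separately, treating the hyperfinite and treeable cases in parallel within each.

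For part~\ref{item : a}, the hyperfinite case is immediate: writing $F = \bigcup_n F_n$ with each $F_n$ a finite Borel equivalence relation, the relations $E_n := E \cap F_n$ are finite Borel and satisfy $E = \bigcup_n E_n$ because $E \subseteq F$. For the treeable case, given a Borel acyclic graphing $T$ of $F$, I would construct a Borel acyclic graphing of $E$ by examining, within each $F$-class (which is a tree under $T$) and for each $E$-subclass $D$, the Steiner subtree of $D$ inside $T$. Naively placing an edge $\{x,y\}$ whenever $x \mathbin{E} y$ and the $T$-path between them avoids other members of $[x]_E$ can produce cycles at Steiner junctions with three or more $E$-branches, so these must be broken in a Borel manner---for example, using a fixed Borel linear order on $X$ to select a spanning substructure at each such junction.

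For part~\ref{item : b}, let $f \colon X \to Y$ be a Borel reduction from $E$ to $F$. A key preliminary observation is that $f$ is countable-to-one: since $f$ is a reduction, every fiber $f^{-1}(y)$ is contained in a single $E$-class and hence is countable. The Lusin--Novikov uniformization theorem then provides a Borel partition $X = \bigsqcup_{i \in \N} A_i$ with $f \restriction A_i$ injective. For the hyperfinite case, writing $F = \bigcup_n F_n$ with $F_n$ finite, I would set $X_n := \bigcup_{i \leq n} A_i$ and
\[
x \mathbin{E_n} y \iff x = y \text{ or } \bigl( x, y \in X_n \text{ and } f(x) \mathbin{F_n} f(y) \bigr).
\]
Each $E_n$ is a Borel equivalence relation whose classes are finite (because $f \restriction X_n$ is at most $(n+1)$-to-one and each $F_n$-class is finite); the sequence is increasing and satisfies $\bigcup_n E_n = E$, showing that $E$ is hyperfinite. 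For the treeable case I would lift a treeing $T$ of $F$ to $X$ by imposing a Borel path structure on each nonempty fiber $f^{-1}(y)$ via the enumeration by the $A_i$, and adjoining one Borel-chosen edge across each edge of $T$ whose endpoints both lie in $f(X)$; acyclicity follows upon contracting fibers.

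The main obstacle throughout is the treeable case, which requires maintaining acyclicity under arbitrary branching through Borel selection. In part~\ref{item : b} for treeable one must additionally cope with edges of $T$ that pass through points of $[f(x)]_F$ not lying in $f(X)$, a situation which reduces to the same Steiner-tree construction underlying part~\ref{item : a}.
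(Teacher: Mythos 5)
The paper does not prove this proposition; it merely cites Jackson--Kechris--Louveau, so there is no in-paper argument to compare against. Judged on its own terms, your proposal is essentially correct and follows the standard line of reasoning from that reference. The two hyperfinite arguments are complete and clean: $E\cap F_n$ works immediately for part~(a), and for part~(b) the observation that a reduction is automatically countable-to-one (each fiber lies in a single $E$-class), together with Lusin--Novikov and the nested sets $X_n=\bigcup_{i\le n}A_i$, gives a correct increasing exhaustion of $E$ by finite Borel equivalence relations.

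The treeable halves are the genuine content and you have correctly isolated the key obstacle in both: the naive ``adjacency along $T$-paths avoiding other $D$-points'' graph creates a clique wherever a Steiner junction of the contracted tree has three or more $D$-branches, and in part~(b) one must also cope with $F$-classes whose tree passes through points outside $f(X)$, which is the same restriction problem. One detail worth tightening: ``a fixed Borel linear order on $X$'' is not quite enough to thin an infinite clique at a junction in a uniformly Borel way, since a linear order need not have least elements and a junction may have infinitely many branches; the standard device is a Borel enumeration of each countable class (equivalently, a Borel assignment of a rank function into $\N$), which lets you replace each junction clique by a path or star consistently. With that substitution your outline matches the usual proof, and the remark that part~(b)'s treeable case reduces to part~(a)'s Steiner construction (tree the restriction of $F$ to $f(X)$, then lift along the fibers, noting that a tree of trees over a tree is a tree) is the right way to close the argument.
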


\subsection{Algebraic groups and lattices}

Let \(\Omega\) to be a fixed algebraically closed field of
characteristic \(0\) containing \(\mathbb{R}\) and all \(p\)-adic fields \(\mathbb{Q}_p\),  with \(p\) prime. 

An \emph{algebraic group} \(G\) is a subgroup of the general linear group \(\GL_n(\Omega)\) which is Zariski closed in \(\GL_n (\Omega)\),  i.e.,  \(G\) consists of all matrices \(M\) in
\(\GL_n(\Omega)\) which satisfy a set of equations \(f_1(M) = 0, \dotsc, f_k(M) = 0\), where each
\(f_i\) is a polynomial in \(\Omega[x_1, \dotsc, x_n]\).  When the equations defining \(G\) have coefficients in a subfield \(k \subseteq \Omega\),  we call \(G\) a
\(k\)-group. 

If \(R\) is a subring of \( \Omega\),  we let
\[
\GL_n(R) = \{(a_{ij}) \in \GL_n(\Omega): a_{ij} \in R\text{ and }(\det(a_{ij}))^{-1} \in R\},
\]
and define for any algebraic group \(G \subseteq \GL_n(\Omega)\),
\[
G(R) = G\cap \GL_n(R).
\]
In particular,  if \(G\) is a \(k\)-group and \(R = k\),  then \(G(k)\) is the group of all matrices in \(G\) with coefficients in \(k\). Typical examples include the \emph{special linear} group \(\SL_n(k)\) and the \emph{special orthogonal group} \(\SO_n(k)\). Moreover, when \(p\) is a prime number we denote by \(\mathbb{Z}\big[\frac{1}{p}\big]\) the ring generated by \(\frac{1}{p}\). Then, \(\SO_n(\mathbb{Z}\big[\frac{1}{p}\big])\) denotes the groups of special orthogonal matrices in \(\SO_n(\mathbb{R})\) with rational entries, whose denominators are powers of \(p\).

If \(G\) is an algebraic \(k\)-group, then \(G\) is said to be \emph{\(k\)-simple} if every proper normal \(k\)-subgroup is trivial and \emph{almost \(k\)-simple} if every proper normal \(k\)-subgroup is finite.

Suppose that \(G\leq \GL_n(\Omega)\) is an algebraic \(k\)-group, where \(k\) is either \(\mathbb{R}\) or the field \(\mathbb{Q}_p\) of \(p\)-adic numbers for some prime \(p\).  In this case,  \(G(k)\leq\GL_n(k)\) is a lcsc group with respect to the \emph{Hausdorff  topology}; i.e., the topology obtained by restricting the natural topology on the product space \(k^{n^2}\) to \(G(k)\). Unless otherwise specified any topological notion about \(G(k)\) will refer to the Hausdorff topology.  

Let \(G\) be a connected algebraic group.  The \emph{(solvable) radical} \(\Rad(G)\) is the maximal normal connected solvable subgroup of \(G\).
The connected algebraic group \(G\) is called \emph{semisimple} if and only if
 \(\Rad(G)\) is trivial.  Algebraic groups considered in this paper are generally semisimple. When \(G\) is a semisimple algebraic group defined over \(k\), then the \(k\)-\emph{rank} of \(G\), in symbols \(\rank_k(G)\), is defined as the maximal dimension of 
an abelian \(k\)-subgroup of \(G\) which is \(k\)-split,  i.e.,  which can be diagonalized over \(k\).  Groups with \(\rank_k > 0\) (respectively \(\rank_k = 0\)) are called \emph{\(k\)-isotropic} (respectively \emph{\(k\)-anisotropic}).

Recall that if \(G\) is a locally compact second countable group and
\(\Gamma\leq G\) is a subgroup, 
we say that \(\Gamma\) is a \emph{lattice} in \(G\) if and only if \(\Gamma\) is discrete and there is an invariant Borel probability measure for the canonical action of \(G\) on \(G/\Gamma\) defined by \((g, h\Gamma)\mapsto gh\Gamma\).

Useful examples of lattices arise in the context of arithmetic groups. (See also \cite[Chapter~10]{Zim}.) We briefly describe a few examples that will appear later in this paper.

Let \(S = \{p_1, \dotsc, p_t\}\) be a finite nonempty set of primes.  
For every \(p\in S\),  we denote by \(\mathbb{Q}_p\) the field of \(p\)-adic numbers.
When \(G\) is an algebraic \(\Q\)-group, we can identify
\(\Gamma_S \coloneqq G\big(\mathbb{Z}[\frac{1}{p_1}, \dotsc, \frac{1}{p_n}]\big)\) with its image under the diagonal embedding into
\[
G(\mathbb{R}) \times G(\mathbb{Q}_{p_1} ) \times \dotsb \times G(\mathbb{Q}_{p_t}).\]
The group \(\Gamma_S\) is a typical example of \(S\)-arithmetic group. We shall define \(S\)-arithmetic groups and discuss some relevant properties in Section~5.2.
Here we recall the following result.

\begin{theorem}[A. Borel~\cite{Bor63},]
If \(G\) is a connected semisimple \(\Q\)-group,  then \(\Gamma_S\) is a lattice in \(G(\R)\times G(\mathbb{Q}_{p_1} ) \times \dotsb \times G(\mathbb{Q}_{p_t})\). 
\end{theorem}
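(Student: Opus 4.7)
The plan is to reduce this to the classical Borel--Harish-Chandra theorem, which gives that \(G(\Z)\) is a lattice in \(G(\R)\) for any connected semisimple \(\Q\)-group \(G\), and then propagate along the \(p\)-adic factors via a class-number finiteness argument.

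First, I would verify that \(\Gamma_S\) is a discrete subgroup of \(G_S \coloneqq G(\R)\times G(\Q_{p_1})\times\dotsb\times G(\Q_{p_t})\) under the diagonal embedding. This reduces to the well-known fact that the \(S\)-integers \(\Z\bigl[\tfrac{1}{p_1},\dotsc,\tfrac{1}{p_t}\bigr]\) embed diagonally as a discrete additive subgroup of \(\R\times\Q_{p_1}\times\dotsb\times\Q_{p_t}\), applied entrywise to matrices lying in \(G\subseteq\GL_n\).

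Next, for each \(i\) I would fix a compact open subgroup \(K_i\leq G(\Q_{p_i})\)---for instance \(K_i=G(\Z_{p_i})\) relative to a fixed smooth \(\Z\)-model of \(G\)---and set \(K\coloneqq K_1\times\dotsb\times K_t\). A theorem of Borel (finiteness of the \(S\)-arithmetic class number of \(G\)) then produces finitely many representatives \(g_1,\dotsc,g_N\in G_S\) such that
\[
G_S = \bigsqcup_{j=1}^N \Gamma_S \cdot (G(\R)\times K)\cdot g_j.
\]
Using Borel--Harish-Chandra, I would choose a Borel fundamental domain \(F\subseteq G(\R)\) of finite Haar measure for \(G(\Z)\curvearrowright G(\R)\), and assemble a fundamental domain \(\mathcal{F}\coloneqq\bigsqcup_{j=1}^N(F\times K)\cdot g_j\) for \(\Gamma_S\) in \(G_S\), after minor surgery to quotient out the finite intersections \(g_j\Gamma_S g_j^{-1}\cap(G(\R)\times K)\). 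Since \(K\) is compact and \(F\) has finite measure, the total Haar measure of \(\mathcal{F}\) is finite, and normalizing gives the desired \(G_S\)-invariant probability measure on \(G_S/\Gamma_S\).

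The main obstacle is the class-number finiteness step: if \(G\) happened to be simply connected with no \(\R\)-anisotropic simple factor, then strong approximation would give \(N=1\) and the argument would collapse to two lines; in general one genuinely needs Borel's finiteness result to control the double-coset count. A secondary bookkeeping issue is handling the finite stabilizers \(\Gamma_S\cap(G(\R)\times K)\) when passing from a Borel fundamental set to an honest fundamental domain, so that the Haar measure descends cleanly to a bona fide probability measure on the quotient.
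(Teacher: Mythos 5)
The paper does not prove this result; it is quoted from Borel's 1963 paper on finiteness properties of adele groups, so there is no in-text argument to compare against. Your outline---discreteness of the diagonal embedding, the finite double-coset decomposition $G_S=\bigsqcup_j (G(\R)\times K)g_j\Gamma_S$ provided by class-number finiteness, and finite covolume via Borel--Harish-Chandra on the archimedean factor---is the standard route and is essentially Borel's own argument (see also Platonov--Rapinchuk or Margulis, Ch.~I). The overall architecture is correct.

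One genuine misstatement needs repair. The intersections $\Gamma_S\cap(G(\R)\times K)$ and, more generally, $\Lambda_j\coloneqq g_j\Gamma_S g_j^{-1}\cap(G(\R)\times K)$ are \emph{not} finite: with $K_i=G(\Z_{p_i})$ the first is exactly $G(\Z)$ (diagonally embedded), which is in general an infinite lattice in $G(\R)$, and each $\Lambda_j$ is an arithmetic group commensurable with $G(\Z)$. This is precisely why Borel--Harish-Chandra is needed at all. The ``surgery'' is therefore not a matter of quotienting out a finite group: you must replace $F$ by a fundamental domain $F_j\subseteq G(\R)$ for the archimedean projection of $\Lambda_j$ (which is injective, since a rational matrix is determined by its real image), and then take $\mathcal{F}=\bigsqcup_j(F_j\times K)g_j$. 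Each $F_j$ has finite Haar measure because $\Lambda_j$ is commensurable with $G(\Z)$ and Borel--Harish-Chandra applies, and $K$ is compact, so $\mathcal{F}$ has finite total measure. With that correction, the sketch is sound.
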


In particular,  we shall use the fact that \(\SO_n\big(\Z\big[\frac{1}{p}\big]\big)\) is a lattice in \(\SO_n(\mathbb{R})\times \SO_n(\mathbb{Q}_p)\) in Section~4.

\section{A descriptive view of the \(n\)-spheres}

In this section we introduce the measure theoretic machinery that is fundamental to the main results of this paper.  Then we shall discuss the proof of the following proposition as a warm up.

\begin{proposition}
\label{prop : nonhyperfinite}
The orbit equivalence relation \(\mathcal{R}_3=\mathcal{R}(\SO_{3}(\Q)\curvearrowright\S^{2})\) is not hyperfinite.
\end{proposition}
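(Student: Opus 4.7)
The plan is to apply Proposition~\ref{prop : amenable} via a nonamenable subgroup of $\SO_3(\mathbb{Q})$ that acts freely on a conull Borel subset of $\mathbb{S}^2$, and then transfer the conclusion to $\mathcal{R}_3$ using the downward closure of hyperfiniteness (Proposition~\ref{prop : containment}\ref{item : a}) together with the fact that hyperfiniteness is preserved under restriction to a Borel set.

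The main ingredient is the classical theorem of Hausdorff (the algebraic core of the Hausdorff--Banach--Tarski paradox) that $\SO_3(\mathbb{Q})$ contains a free subgroup of rank two; for instance, the two rotations through $\arccos(1/3)$ about two perpendicular coordinate axes have rational matrix entries and generate such a free group. Fix such a subgroup $\mathbb{F}_2 \leq \SO_3(\mathbb{Q})$, and equip $\mathbb{S}^2$ with the normalized surface area measure $\mu$, which is $\SO_3(\mathbb{R})$-invariant and hence $\mathbb{F}_2$-invariant.

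Every nonidentity element of $\SO_3(\mathbb{R})$ is a rotation, hence fixes exactly the two antipodal points where its axis meets $\mathbb{S}^2$. Therefore $\bigcup_{g \in \mathbb{F}_2 \setminus \{1\}} \Fix(g)$ is a countable union of two-point sets, and so is countable and $\mu$-null. Consequently $\Fr_{\mathbb{F}_2} \mathbb{S}^2$ is a $\mu$-conull, $\mathbb{F}_2$-invariant Borel subset of $\mathbb{S}^2$ on which $\mathbb{F}_2$ acts freely while preserving the probability measure $\mu \restriction \Fr_{\mathbb{F}_2} \mathbb{S}^2$. Since $\mathbb{F}_2$ is nonamenable, Proposition~\ref{prop : amenable} yields that $\mathcal{R}(\mathbb{F}_2 \curvearrowright \Fr_{\mathbb{F}_2} \mathbb{S}^2)$ is not hyperfinite.

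To finish: the $\mathbb{F}_2$-orbit equivalence relation on $\mathbb{S}^2$ is contained in $\mathcal{R}_3$ because $\mathbb{F}_2 \leq \SO_3(\mathbb{Q})$, and this containment persists after restricting to the Borel set $\Fr_{\mathbb{F}_2} \mathbb{S}^2$. If $\mathcal{R}_3$ were hyperfinite, then so would be its restriction to $\Fr_{\mathbb{F}_2} \mathbb{S}^2$, and hence so would every sub-equivalence relation of that restriction, contradicting the previous paragraph. No serious obstacle remains beyond invoking Hausdorff's construction; everything after that is the standard ``free probability-measure-preserving action of a nonamenable group'' recipe for nonhyperfiniteness.
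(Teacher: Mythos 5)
Your overall strategy is exactly the paper's: find a rank-two free subgroup of $\SO_3(\mathbb{Q})$, observe that every nonidentity rotation of $\mathbb{S}^2$ has exactly two fixed points so the action is free off a countable (hence null) set, invoke Proposition~\ref{prop : amenable} on the free part, and transfer nonhyperfiniteness back to $\mathcal{R}_3$ via Proposition~\ref{prop : containment}. That is all correct.

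However, the specific generators you propose are wrong: the rotations through angle $\arccos(1/3)$ about perpendicular coordinate axes do \emph{not} have rational matrix entries. The matrix of such a rotation about the $z$-axis is
\[
\begin{pmatrix} 1/3 & -2\sqrt{2}/3 & 0 \\ 2\sqrt{2}/3 & 1/3 & 0 \\ 0 & 0 & 1 \end{pmatrix},
\]
whose off-diagonal entries are irrational, so this pair lies in $\SO_3(\mathbb{R})$ but not in $\SO_3(\mathbb{Q})$. (What you are recalling is \'{S}wierczkowski's theorem that these two rotations generate a free group in $\SO_3(\mathbb{R})$; it is not an example inside $\SO_3(\mathbb{Q})$. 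Hausdorff's own construction also uses irrational rotations and gives $\mathbb{Z}/2 * \mathbb{Z}/3$ rather than a free group of rank two.) To stay inside $\SO_3(\mathbb{Q})$, replace them with genuinely rational rotations, e.g.\ the pair the paper takes from Tao,
\[
\sigma=\tfrac{1}{5}\begin{pmatrix}3&4&0\\-4&3&0\\0&0&5\end{pmatrix},\qquad
\tau=\tfrac{1}{5}\begin{pmatrix}5&0&0\\0&3&4\\0&-4&3\end{pmatrix},
\]
which generate a free group in $\SO_3\bigl(\mathbb{Z}\bigl[\tfrac{1}{5}\bigr]\bigr)\le\SO_3(\mathbb{Q})$. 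With that substitution your argument is complete and coincides with the paper's proof.
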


Clearly, in order to apply Proposition~\ref{prop : amenable} we need to find an invariant probability measure on the sphere.

\subsection{Invariant ergodic measures on the \(n\)-spheres}
If \(G\) is a lcsc group, then there exists a (nonzero) Borel measure \(m_G\)
on \(G\) which is invariant under the action of \(G\) on itself via left translations. Such a measure is called a \emph{Haar measure} on \(G\). If \(m_G'\) is a second Haar measure on \(G\), then there
exists a positive real constant \(c\) such that \(m_G'(A)=cm_G(A)\) for every Borel subset \(A\subseteq G\). It is well-known that \(G\) is compact if and only if \(m_G(G)<\infty\). In this case, there exists a unique Haar probability measure on \(G\). The group \(G\) is said to be \emph{unimodular} if and only if each Haar measure on \(G\) is also invariant under the action of \(G\) on itself via right translations. It is well-known that if \(G\) is either discrete, compact or a connected simple Lie group, then \(G\) is unimodular.

Suppose that \(K\) is a compact second countable group and that \(L< K\) is a closed subgroup. Then both \(K\) and \(L\) are unimodular. Hence there exists a unique \(K\)-invariant probability measure on the standard Borel \(K\)-space \(K/L\). (For example, see \cite[Theorem~3.17]{PlaRap}.) The measure is called the\emph{ Haar probability measure} on \(K/L\). The Haar probability measure on \(K/L\) can be described explicitly as the pushforward of the Haar probability measure \(m_K\) on \(K\) through the canonical surjection \(\pi \colon K\to K/L\).

\begin{lemma}[{See~\cite[Lemma~2.2]{Tho03APAL}}]
\label{lem : K/L} 
Let \(K\) be a compact second countable group, let \(L\leq K\) be a closed subgroup and let \(\mu\) be the Haar probability measure on \(K/L\). If \(\Gamma\) is a countable dense subgroup of \(K\), then 
the action of \(\Gamma\) on \(K/L\) is uniquely ergodic; i.e., \(\mu\) is the unique \(\Gamma\)-invariant
probability measure on \(K/L\).
\end{lemma}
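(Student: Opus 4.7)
The plan is to start from an arbitrary $\Gamma$-invariant Borel probability measure $\nu$ on $K/L$ and deduce that $\nu = \mu$. The strategy has two parts: first upgrade $\Gamma$-invariance to full $K$-invariance by a density/continuity argument, and then invoke uniqueness of the Haar probability measure on $K/L$.

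For the first part, I would introduce the pushforward map $\Phi\colon K \to \mathcal{P}(K/L)$, $\Phi(k) = k_*\nu$, where $\mathcal{P}(K/L)$ denotes the space of Borel probability measures on $K/L$ endowed with the weak-$*$ topology. For any continuous $f\colon K/L \to \R$ one has $\int f\, d\Phi(k) = \int f(k\cdot x)\, d\nu(x)$, and this depends continuously on $k$: since $K/L$ is compact metrizable and $K \times K/L \to K/L$ is jointly continuous, $f(k_n\cdot x) \to f(k\cdot x)$ uniformly in $x$ whenever $k_n \to k$ in $K$, so dominated convergence applies. Hence $\Phi$ is weak-$*$ continuous. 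By $\Gamma$-invariance, $\Phi(\gamma) = \nu$ for every $\gamma \in \Gamma$, and since $\Gamma$ is dense in $K$ and $\Phi$ is continuous, we must have $\Phi(k) = \nu$ for every $k \in K$; that is, $\nu$ is $K$-invariant.

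For the second part, I would invoke the fact (recalled immediately before the lemma) that $\mu$ is the unique $K$-invariant Borel probability measure on $K/L$, obtained as the pushforward of the normalized Haar measure on $K$. Since $\nu$ is now known to be $K$-invariant, we conclude $\nu = \mu$, which is exactly the desired unique ergodicity of the $\Gamma$-action. The only mildly delicate point is the weak-$*$ continuity of $\Phi$, but this is a standard consequence of the compactness and metrizability of $K/L$, and it should not present a real obstacle; everything else is bookkeeping against results already established in the excerpt.
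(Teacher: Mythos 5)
Your argument is correct and is essentially the standard proof (and, as far as I recall, the one in the cited Thomas reference): the stabilizer $\{k\in K : k_*\nu = \nu\}$ is closed because $k\mapsto k_*\nu$ is weak-$*$ continuous (using compactness of $K\times K/L$ for uniform continuity of $(k,x)\mapsto f(k\cdot x)$), it contains the dense subgroup $\Gamma$, hence equals $K$, and then uniqueness of the $K$-invariant probability measure on $K/L$ finishes. The paper itself does not prove the lemma but cites it, so there is no alternate route in the paper to compare against.
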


Note that if \(\mu\) is the unique \(\Gamma\)-invariant probability measure and therefore the set of invariant measures consists of only one point,  then \(\mu\) is necessarily ergodic.

Lemma \ref{lem : K/L} can be used to define the invariant measure on \(\mathbb{S}^{n}\). It is well-known that the compact group \(\SO_{n}(\R)\) 
acts transitively on \(\S^{n-1}\).
Then the stabilizer of each point of \(\mathbb{S}^{n-1}\) in \(\SO_n(\R)\) is easily seen to be isomorphic to \(\SO_{n-1}(\R)\).  (In fact, since \(\SO_n(\R)\) acts transitively,  the stabilizers of all points of \(\mathbb{S}^{n-1}\) are conjugate subgroups of \(\SO_n(\R)\).)
Hence, given any fixed \(x_0\in \mathbb{S}^{n-1}\), we can identify \(\S^{n-1}\)  with the coset space \(\SO_{n}(\R)/\SO_{n-1}(\R)\)
via the map \(x\mapsto \{g\in \SO_n(\R) : g\cdot x_0 =x\}\).
Slightly abusing notation, we shall denote both the Haar probability measure on \(\SO_{n}(\R)/\SO_{n-1}(\R)\)
and the corresponding probability measure on \(\S^{n-1}\) by \(\mu_{n}\) and call \(\mu_n\) the \emph{spherical measure}.
Since \(\SO_{n}(\Q)\) is dense in \(\SO_{n}(\R)\) for all \(n\in \N\), it follows that \(\mu_{n}\) is the unique \(\SO_{n}(\Q)\)-invariant measure on \(\S^{n}\) by Lemma~\ref{lem : K/L} .  Thus, the action \(\SO_n(\mathbb{Q})\curvearrowright (\mathbb{S}^{n-1},\mu_n)\) is ergodic.

Notice that it is possible to define \(\mu_n\) in terms of Lebesgue measure.  In fact, if \(\lambda_n\) is the Lebegue measure in \(\mathbb{R}^n\)and \(B_1(0)\) is the unitary \(n\)-dimensional ball, then
\[
\mu_n(A) = \frac{1}{\lambda_n(B_1(0))} \lambda_n(\{ t x : x\in A, 0\leq t \leq 1 \}).
\]
For more details about this approach we refer the  interested reader to \cite[Theorem~3.4--3.7]{Mat}. Now we have all ingredients to prove that \(\mathcal{R}_3\) is not hyperfinite. 

\begin{proposition}
For all \(n\geq 2\)
the free part of the action of \(\SO_{n}(\Q)\) on \(\S^{n-1}\) is conull. That is,
\(\mu_n\big(\Fr(\mathbb{S}^{n-1}) \big) = 1\).
\end{proposition}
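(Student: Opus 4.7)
The plan is to show that the non-free part is contained in a countable union of measure-zero sets.

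First I would write the complement of the free part explicitly. Setting $\mathrm{Fix}(g) \coloneqq \{x \in \mathbb{S}^{n-1} : g \cdot x = x\}$ for $g \in \SO_n(\mathbb{R})$, we have
\[
\mathbb{S}^{n-1} \setminus \Fr(\mathbb{S}^{n-1}) = \bigcup_{g \in \SO_n(\mathbb{Q}) \setminus \{1\}} \mathrm{Fix}(g).
\]
Since $\SO_n(\mathbb{Q})$ is countable, by countable additivity of $\mu_n$ it is enough to verify that $\mu_n(\mathrm{Fix}(g)) = 0$ for every $g \in \SO_n(\mathbb{Q}) \setminus \{1\}$.

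Next I would reduce to a statement about Lebesgue measure in $\mathbb{R}^n$. For any $g \in \SO_n(\mathbb{R})$, the set of fixed vectors $V_g \coloneqq \{v \in \mathbb{R}^n : g \cdot v = v\}$ is a linear subspace (the $1$-eigenspace of $g$), and $\mathrm{Fix}(g) = V_g \cap \mathbb{S}^{n-1}$. The key observation is that when $g \neq 1$, the subspace $V_g$ is a \emph{proper} subspace of $\mathbb{R}^n$, so $\dim V_g \leq n-1$ and hence $\lambda_n(V_g) = 0$. Consequently, the solid cone
\[
C_g \coloneqq \{ t x : x \in \mathrm{Fix}(g),\ 0 \leq t \leq 1 \}
\]
is contained in $V_g$ and therefore also has $\lambda_n$-measure zero.

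Finally, applying the representation of the spherical measure in terms of Lebesgue measure recalled just before the statement, namely
\[
\mu_n(\mathrm{Fix}(g)) = \frac{1}{\lambda_n(B_1(0))} \lambda_n(C_g),
\]
we conclude that $\mu_n(\mathrm{Fix}(g)) = 0$, which completes the proof.

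There is no real obstacle here; the only thing to keep in mind is that the implication $g \neq 1 \Rightarrow V_g \subsetneq \mathbb{R}^n$ is immediate (a rotation fixing every vector is the identity), and that the small edge cases are harmless: for $n = 2$ a nontrivial rational rotation has no fixed point on $\mathbb{S}^1$, and for $n \geq 3$ the fixed set lies in a lower-dimensional great subsphere, which is a null set for the spherical measure.
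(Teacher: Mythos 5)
Your proof is correct and follows essentially the same approach as the paper's: decompose the non-free part as a countable union over nontrivial $g \in \SO_n(\mathbb{Q})$ of fixed-point sets, observe that each such set lies in a proper linear subspace (the $1$-eigenspace), and conclude via the cone formula relating $\mu_n$ to Lebesgue measure.
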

\begin{proof}
Consider the action of \(\SO_{n}(\Q)\) on the \(n\)-dimensional euclidean space \(\mathbb{R}^n\).  Let \(\mu=\mu_n\)  be the spherical measure and denote by \(\lambda\) the Lebesgue measure in \(\mathbb{R}^n\). For any nonidentity \(A\in \Gamma\),  the linear subspace of fixed vectors \(\{x\in \mathbb{R}^n\mid Ax = x\}\) is an eigenspace with eigenvalue \(1\), thus has dimension less than \(n\).
It follows that,
 \(\lambda(\{x\in \mathbb{R}^n\mid Ax = x\})=0\). 
If \(A\in \Gamma\) is not the identity, denote by \(\Fix_A=\{x\in \mathbb{S}^{n-1}: Ax=x\}\).
 Clearly \[\mu(\Fix_A) = \frac{1}{\lambda(B_1(0))} \lambda\big(\big\{tx \mid x\in \mathbb{S}^{n-1}, \, Ax=x \text{ and } t\in [0,1]\big\}\big) =0.\] 
Therefore,
\[
\mu(\Fr(\mathbb{S}^{n-1})) = \mu\bigg( \mathbb{S}^{n-1} \setminus \bigcup_{A\in \Gamma\setminus \{1\}} \Fix_A\bigg) = 1.\qedhere
\]
\end{proof}

\begin{proof}[Proof of Proposition~\ref{prop : nonhyperfinite}]
Let \(\mu = \mu_{3}\) be the spherical measure on \(\mathbb{S}^2\), and
consider the orbit relation \(\mathcal{R}_3\) induced by the action of \(\SO_{3}(\Q) \) on \(\S^{2}\).  Because of the previous discussion \(\SO_3(\Q)\) acts on \((\mathbb{S}^{2},\mu)\) ergodically.
It is well-known that \(\SO_{3}(\Q)\) contains a free subgroup \(\Gamma\) of rank \(2\).
E.g., as pointed out by Tao~\cite{Tao04} we can define \(\Gamma = \langle\sigma,\tau\rangle\) the group generated by the matrices
\[
\sigma = \frac{1}{5}
\begin{bmatrix} 
3 & 4 & 0 \\
-4 & 3 & 0\\
0 & 0 & 5 \\
\end{bmatrix}
\quad
\text{and}\quad
\tau = \frac{1}{5}
\begin{bmatrix} 
5 & 0 & 0 \\
0 & 3 & 4\\
0 & -4 & 3 \\
\end{bmatrix}.
\]
Now, by Proposition~\ref{prop : containment}  it suffices to show that the orbit equivalence relation induced by the action of \(\Gamma\) is not hyperfinite.
Let \(Z\) be the set of points in \(\mathbb{S}^{2}\) that are fixed by some non-identity transformation in \(\Gamma\), that is \( Z = \setm{x\in \S^{2}}{\exists \gamma\in \Gamma\smallsetminus\{1\}\; \gamma\cdot x = x}\).
Each non-identity trasformation in \(\Gamma\) is a rotation with exactly two fixed points on \(\S^{2}\), namely, the ones along its rotation axis. Consequently,  \(Z\) and its saturation \(\Gamma\cdot Z\) are countable sets,  thus \(\mu(\Gamma\cdot Z)=0\).  Let \(M\coloneqq \mathbb{S}^2\setminus \Gamma\cdot Z\). It is clear that \(M\) is \(\Gamma\)-invariant.
Since \(\Gamma\) is not amenable, the induced equivalence relation \(\mathcal{R}(\Gamma\curvearrowright M)\) is not hyperfinite by Proposition~\ref{prop : amenable}.
It follows that \(\mathcal{R}_3\) is not hyperfinite by Proposition~\ref{prop : containment}~\ref{item : b} as desired.
\end{proof}

\begin{remark}
In fact,  our argument shows that \(\mathcal{R}\big( \SO_3\big(\Z\big[\frac{1}{5}\big]\big) \curvearrowright \mathbb{S}^{2} \big)\) is not hyperfinite.
\end{remark}

Notice that \(\SO_2(\Q)\) is the group of rotations of the plane about the origin with rational entries. Hence \(\SO_2(\Q)\) is
isomorphic to a subgroup of the abelian group \(\R/\Z\). It readily follows that the orbit equivalence relation induced by \(\SO_2(\Q)\curvearrowright \mathbb{S}^1\) is hyperfinite because of Theorem~\ref{thm : GaoJac}.

The fact that \(\mathcal{R}_2\) is not concretely classifiable is a consequence of the following classical result in descriptive set theory.

\begin{proposition}[{E.g., see ~\cite[Corollary~3.5]{Hjo00}}]
\label{Prop : dense orbit}
Let \(G\) be a countable group acting by homeomorphisms on a Polish space \(X\).  If there is a dense orbit and every orbit is meager, then the induced orbit equivalence relation \(\mathcal{R}(G\curvearrowright X)\) is not smooth.
\end{proposition}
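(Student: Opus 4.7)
The plan is to prove this via a generic-ergodicity argument for the action, combined with the Baire-category behaviour of any putative reduction.

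\emph{Step 1.} I would first establish that every \(G\)-invariant Borel set \(B\subseteq X\) is either meager or comeager. Assume \(B\) is non-meager; by the Baire property, \(B\) is comeager in some nonempty open \(U\subseteq X\). The saturation \(G\cdot U = \bigcup_{g\in G}g\cdot U\) is open and \(G\)-invariant; since \(U\) is nonempty it meets the dense orbit \(G\cdot x_0\), hence \(G\cdot U\supseteq G\cdot x_0\) is dense, so \(G\cdot U\) is an open dense set and in particular comeager. Because homeomorphisms preserve meagerness and \(B\) is \(G\)-invariant, \(B\) is comeager in every translate \(g\cdot U\); countability of \(G\) then gives that \((G\cdot U)\setminus B = \bigcup_g (g\cdot U\setminus B)\) is a countable union of meager sets, so \(B\) is comeager in \(X\).

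\emph{Step 2.} Next I would assume for contradiction that \(E=\mathcal{R}(G\curvearrowright X)\) is smooth, witnessed by a Borel reduction \(f\colon X\to\R\). Since \(f\) is constant on each orbit, every preimage \(f^{-1}(A)\) with \(A\subseteq\R\) Borel is \(G\)-invariant Borel, hence meager or comeager by Step~1. Define
\[
r = \inf\bigl\{q\in\Q : f^{-1}((-\infty,q])\text{ is comeager}\bigr\}.
\]
A short bookkeeping check rules out \(r=\pm\infty\) (otherwise the preimage of the empty set would be comeager). Then \(f^{-1}((-\infty,r+\tfrac{1}{n}])\) is comeager while \(f^{-1}((-\infty,r-\tfrac{1}{n}])\) is meager for every \(n\geq 1\), so the fibre
\[
f^{-1}(\{r\}) = \bigcap_{n\geq 1} f^{-1}\bigl((r-\tfrac{1}{n},\,r+\tfrac{1}{n}]\bigr)
\]
is comeager.

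\emph{Step 3 and main obstacle.} This produces the contradiction: because \(f\) is a reduction, \(f^{-1}(\{r\})\) is either empty or a single \(G\)-orbit, both meager by hypothesis, contradicting the comeagerness obtained in Step~2. I expect the main technical subtlety to sit in Step~1, where the dense-orbit hypothesis must be deployed carefully to upgrade the local comeagerness of \(B\) inside a Baire-category witness open set \(U\) to global comeagerness of its \(G\)-saturation; once that dichotomy is in hand, the remainder is a Baire-category analogue of the classical measure-theoretic obstruction to smoothness via ergodicity.
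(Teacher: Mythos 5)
Your proof is correct, and since the paper simply cites this result to Hjorth's book without giving its own argument, there is nothing in the paper to compare against directly; your argument is the standard one that the cited reference would supply. Step 1 establishes what is usually called \emph{generic ergodicity}: the existence of a dense orbit, together with the Baire property for Borel sets, forces every invariant Borel set to be meager or comeager, exactly as you argue (the key point being that \(G\cdot U\) is dense open, hence comeager, once \(U\) meets the dense orbit, and that \((G\cdot U)\setminus B\) is a countable union of translates of the meager set \(U\setminus B\)). Steps 2--3 are the Baire-category analogue of the ergodicity obstruction to smoothness. Two tiny points worth tidying if you write this up: in Step 2 the endpoints \(r\pm\tfrac{1}{n}\) need not be rational, so one should either take the infimum over all reals or insert rational \(q_n\in(r,r+\tfrac1n)\) and \(q'_n\in(r-\tfrac1n,r)\) and observe that \(f^{-1}((q'_n,q_n])\) is comeager; and ruling out \(r=\pm\infty\) uses the Baire category theorem in both directions (\(r=-\infty\) would make \(\emptyset=\bigcap_q f^{-1}((-\infty,q])\) comeager, while \(r=+\infty\) would make \(X=\bigcup_q f^{-1}((-\infty,q])\) meager). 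Neither of these is a gap, just bookkeeping you already flagged. An alternative, slightly slicker route for Steps 2--3, which avoids the order on \(\mathbb{R}\) and works for any second countable \(T_0\) target \(Y\), is to fix a countable basis \(\{U_n\}\) for \(Y\), let \(C\) be the comeager set \(\bigcap\{f^{-1}(U_n): f^{-1}(U_n)\text{ comeager}\}\cap\bigcap\{X\setminus f^{-1}(U_n): f^{-1}(U_n)\text{ meager}\}\), and check that \(f\) is constant on \(C\); this gives the comeager fibre directly and then your Step 3 finishes as before.
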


\begin{proposition}
\(\mathcal{R}_2\) is not concretely classifiable. Thus, \(\mathcal{R}_2 \sim_B E_0\).
\end{proposition}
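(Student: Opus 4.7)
The plan is to combine Proposition~\ref{Prop : dense orbit} with Theorem~\ref{thm : GaoJac} and the Harrington--Kechris--Louveau $E_0$ dichotomy recalled in the introduction. Since $\SO_2(\Q)$ is abelian, Theorem~\ref{thm : GaoJac} immediately gives $\mathcal{R}_2\leq_B E_0$, so the task reduces to showing that $\mathcal{R}_2$ is not smooth; then $E_0\leq_B \mathcal{R}_2$ by the $E_0$ dichotomy, and the two directions together yield $\mathcal{R}_2\sim_B E_0$.

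To apply Proposition~\ref{Prop : dense orbit} to the action $\SO_2(\Q)\curvearrowright \S^1$, I must verify three things: that the action is by homeomorphisms of a Polish space, that every orbit is meager, and that some orbit is dense. The first is obvious from the definition of the action. For meagerness, every orbit is countable (since $\SO_2(\Q)$ is), and the perfect Polish space $\S^1$ contains no nonempty countable open set, so countable sets are meager. For density I invoke the fact, already used throughout the paper via Lemma~\ref{lem : K/L}, that $\SO_2(\Q)$ is dense in $\SO_2(\R)$; since $\SO_2(\R)$ acts transitively on $\S^1$, it follows that every $\SO_2(\Q)$-orbit in $\S^1$ is dense, which is even stronger than required.

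The only step that is not a completely routine invocation of cited machinery is the density of $\SO_2(\Q)$ in $\SO_2(\R)$, and even this is classical: it follows from the rational parametrization $(\cos\theta,\sin\theta) = \left(\tfrac{1-t^2}{1+t^2},\tfrac{2t}{1+t^2}\right)$ with $t\in\Q$, equivalent to the classification of Pythagorean triples, which produces rational rotations at a dense set of angles in $[0,2\pi)$. I do not anticipate any serious obstacle; the entire argument is a packaging exercise combining three independent results already in play in the paper.
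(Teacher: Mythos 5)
Your argument is correct and is a mild variant of the paper's. The paper also deduces non-smoothness from Proposition~\ref{Prop : dense orbit}, but it works with the cyclic subgroup generated by the single matrix $A=\frac{1}{5}\begin{bsmallmatrix} 4 & -3\\ 3 & 4 \end{bsmallmatrix}$, verifying via Niven's theorem that $A$ is a rotation by an irrational multiple of $2\pi$ so that $\langle A\rangle$-orbits are dense; it then passes from the non-smoothness of $\mathcal{R}(\langle A\rangle\curvearrowright \S^1)$ to that of $\mathcal{R}_2$ (a subrelation of a smooth countable Borel equivalence relation is smooth). You instead apply the proposition directly to the full group $\SO_2(\Q)$, getting density of orbits from the density of $\SO_2(\Q)$ in $\SO_2(\R)$ — a fact the paper already uses repeatedly via Lemma~\ref{lem : K/L} — and transitivity of $\SO_2(\R)$ on $\S^1$. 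Your route is shorter and avoids both Niven's theorem and the subrelation step; the paper's route yields the slightly stronger by-product that non-smoothness already holds for a single rational irrational rotation. The upper bound $\mathcal{R}_2\leq_B E_0$ via Gao--Jackson and the lower bound $E_0\leq_B\mathcal{R}_2$ via the Harrington--Kechris--Louveau dichotomy are exactly the same as in the paper's surrounding discussion.
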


\begin{proof}
Let \(A=\frac{1}{5}
\begin{bsmallmatrix} 4 & -3\\ 3 & 4 \end{bsmallmatrix}\). It is easily checked that \(A\in \SO_2(\Q)\), and \(A\) is the rotation of \(\mathbb{S}^1\) by the angle \(\theta\) with \(\sin \theta = \frac{3}{5}\) and \(\cos \theta = \frac{4}{5}\). 
That is, \(A\cdot x = xe^{2\pi i r}\) for all \(x\in \mathbb{S}^1\) and for \(r\in[0,1)\) with \(\theta = 2\pi r\).
By Niven's theorem ~\cite[Corollary~3.12]{Niv56}, it follows that \(r\) is an irrational number.
So, let \(\Gamma=\langle A\rangle\) be the subgroup of \(\SO_2(\mathbb{Q})\) generated by \(A\). 
Since \(r\) is irrational,  we know that for any point \(x\in\mathbb{S}^{1}\) the orbit \(\Gamma\cdot x\) is dense in \(\mathbb{S}^1\).
It follows that \(\mathcal{R}(\Gamma\curvearrowright \mathbb{S}^1)\) is not concretely classifiable by Proposition~\ref{Prop : dense orbit}. 
\end{proof}

The results  of next section generalize and strengthen Proposition~\ref{prop : nonhyperfinite}. In fact,  we shall prove that for all \(n\geq3\) the orbit equivalence relation of \(\mathcal{R}_n\) is not treeable. 

\subsection{Non-treeability through property \((\mathrm{T})\)}


We recall the definition of property \((\mathrm{T})\) introduced by Kazhdan~\cite{Kaz67}. Let \(G\) be a lcsc group and let \(\pi\colon G \to U(\mathcal{H})\) be a unitary representation of \(G\) on the separable Hilbert space \(\mathcal{H}\). We say that \(\pi\) \emph{almost admits invariant vectors} if for every \(\epsilon > 0\) and every compact subset \(K \subseteq G\),  there exists a unit
vector \(v \in \mathcal{H}\) such that \(\lVert \pi(g)\cdot v - v \rVert < \epsilon\) for all \(g \in K\).
We say that \(G\) has \emph{Kazhdan property \((\mathrm{T})\)} if for every unitary representation \(\pi\) of \(G\),  if \(\pi\) almost admits invariant vectors, then \(\pi\) has a non-zero invariant vector. It is well-known that if \(\Gamma\) is a countable group and \(\Gamma\) has property \((\mathrm{T})\), then \(\Gamma\) is finitely generated.

The first ingredient in the proof of Theorem~\ref{thm : not treeable} is the following antitreeability result that was isolated by Hjorth and Kechris in~\cite[Theorem~10.5]{HjoKec}.

\begin{theorem}[essentially Adams-Spatzier~\cite{AdaSpa}]
\label{thm : AdaSpa}
Let \(\Gamma\) be a countable  group with Kazhdan's property \((\mathrm{T})\), \(X\) a standard Borel \(\Gamma\)-space and \(\mu\) a \(\Gamma\)-invariant, nonatomic, ergodic measure on \(X\). Then
\(\mathcal{R}(\Gamma\curvearrowright X)\) is not treeable.
 (In fact, if \(F\) is any treeable countable Borel equivalence relation, then \(\mathcal{R}(\Gamma\curvearrowright X)\) is \(F\)-ergodic.)
\end{theorem}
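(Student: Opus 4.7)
\medskip
\noindent\emph{Proof plan.} I would prove the stronger parenthetical $F$-ergodicity statement, since the non-treeability of $E:=\mathcal{R}(\Gamma\curvearrowright X)$ is an immediate corollary: if $E$ itself were treeable then $F$-ergodicity with $F=E$ and $f=\mathrm{id}_X$ would force a single $E$-class to be $\mu$-conull, contradicting that $E$-classes are countable while $\mu$ is nonatomic. So assume $F$ is a treeable countable Borel equivalence relation on a standard Borel space $Y$ and $f\colon X\to Y$ is a Borel homomorphism from $E$ to $F$; the goal is to show that $f$ sends a $\Gamma$-invariant conull set into a single $F$-class. Fix a Borel acyclic graphing $T$ of $F$, so each $F$-class is the vertex set of a tree, and for every pair of $F$-equivalent points there is a unique finite geodesic in $T$ joining them.

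The strategy is to manufacture a unitary representation of $\Gamma$ to which property $(\mathrm T)$ can be applied. Write $\vec T$ for the set of oriented edges of $T$, and for $y\in Y$ let $\vec T_y\subseteq \vec T$ denote the oriented edges in the tree component of $y$. Form the direct-integral Hilbert space
\[
\mathcal{H}\;=\;\int_X^{\oplus}\ell^2(\vec T_{f(x)})\,d\mu(x),
\]
and define a unitary action $\pi$ of $\Gamma$ on $\mathcal{H}$ by combining the $\Gamma$-action on $X$ with the canonical identification $\vec T_{f(x)}=\vec T_{f(\gamma x)}$ coming from $f(x)\mathrel{F} f(\gamma x)$. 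Almost invariant vectors are produced geometrically from the tree: for each $n$, pick the normalized indicator of the oriented edges lying within distance $n$ of $f(x)$ in $T$ and pointing away from $f(x)$. Because the length of the geodesic from $f(x)$ to $f(\gamma x)$ is a $\mu$-measurable function of $x$ for each fixed $\gamma$, I can restrict to sub-level sets on which this length is uniformly bounded; the resulting vectors are then almost invariant on arbitrarily large conull sets as $n\to\infty$, which is exactly what property $(\mathrm T)$ consumes.

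Property $(\mathrm T)$ now yields a genuine $\pi(\Gamma)$-invariant unit vector $\xi\in\mathcal{H}$, which unfolds to a measurable assignment $x\mapsto \xi_x\in\ell^2(\vec T_{f(x)})$ that is covariant along the $E$-orbits. By ergodicity of $\Gamma\curvearrowright(X,\mu)$, all invariant scalar quantities extracted from $\xi_x$ (such as $\|\xi_x\|$ and the pattern of $\xi_x$ relative to $f(x)$) are essentially constant; tracing this through, one deduces that the tree component of $f(x)$ is the same for $\mu$-almost every $x$, which is the required $F$-ergodicity.

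The main obstacle I expect is the almost-invariance step: producing vectors that are almost invariant uniformly over a compact set of $\gamma$'s requires controlling the geodesic length $d_T(f(x),f(\gamma x))$ simultaneously in $\gamma$ and $x$. The standard remedy is to exhaust $X$ by the Borel sets $\{x:d_T(f(x),f(\gamma x))\le N\text{ for all }\gamma\in K\}$ for finite $K\subseteq\Gamma$ and large $N$, applying almost invariance on these sets and then passing to the limit; all other steps are routine once the direct-integral Hilbert field is set up and the edge-covariance of the $\Gamma$-action is verified.
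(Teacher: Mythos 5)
The paper does not actually prove this theorem; it quotes it from Adams--Spatzier as formulated in Hjorth--Kechris~\cite[Theorem~10.5]{HjoKec}. So I can only compare your argument to the standard one from the literature. Your overall architecture --- a direct-integral Hilbert bundle with fibre $\ell^2(\vec T_{f(x)})$, a unitary $\Gamma$-action coming from the tree identification along orbits, an invocation of property $(\mathrm{T})$, and a barycenter/ergodicity argument at the end --- is indeed the right skeleton for the Adams--Spatzier proof, so the high-level plan is sound.

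However, the specific almost-invariant vectors you propose do not work, and this is a genuine gap rather than a technicality. Take the normalized indicator $\xi^n_x$ of the oriented edges within distance $n$ of $f(x)$. First, if some vertex of the tree has infinite degree (a Borel treeing is locally countable but need not be locally finite), this indicator is not even in $\ell^2$. Second, and more fundamentally, even for a locally finite tree of degree $\geq 3$ these vectors are \emph{not} almost invariant: balls in such a tree grow exponentially, so for a fixed geodesic distance $d = d_T(f(x), f(\gamma x)) \geq 1$ the symmetric difference $B_n(f(x))\triangle B_n(f(\gamma x))$ occupies a fraction of $B_n(f(x))$ bounded away from $0$ as $n\to\infty$ (on the $3$-regular tree it is roughly $1/2$ already for $d=1$); hence $\langle \xi^n_{\gamma x}, \xi^n_x\rangle$ stays bounded away from $1$ and no amount of truncating the geodesic length repairs this. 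Your sub-level-set device controls $d_T(f(x),f(\gamma x))$, which is the right instinct, but it does not save ball indicators. The standard remedy is to abandon ball indicators entirely and work with the orientation fields $o_u\colon \vec T\to\{0,1\}$ (indicator of edges pointing away from $u$): the difference $o_u - o_v$ is supported on the geodesic from $u$ to $v$ and satisfies $\|o_u-o_v\|^2 = 2\,d_T(u,v)$, so $b(\gamma)_x := o_{f(x)} - o_{f(\gamma^{-1}x)}$ defines a $1$-cocycle for the representation once one truncates to make it square-integrable, and one then invokes property $(\mathrm{FH})$ (equivalently $(\mathrm{T})$) to get a coboundary, i.e.\ an essentially invariant ``vertex at infinity plus $\ell^2$-correction''. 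Finally, your closing step (``tracing this through'') also needs a concrete selection: from the invariant field $x\mapsto\xi_x$ one must extract, e.g., the finite nonempty level set $\{e : |\xi_x(e)| \geq c\}$ for a suitable threshold $c$ and take the convex hull/barycenter in the tree to obtain a $\Gamma$-invariant vertex- or edge-valued map, which ergodicity then forces to be essentially constant. As written, this last step is a gesture rather than an argument.
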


\begin{remark}
Hjorth~\cite{Hjo99} pointed out that the hypothesis that the action of \(\Gamma\curvearrowright (X,\mu)\) be ergodic is not necessary. The weaker assumption that \(\mathcal{R}(\Gamma\curvearrowright X)\) is not smooth suffices.
\end{remark}

Also, we shall use the next result, which is contained also in~{\cite[Theorem~6.4.4]{BekdeLVal}.

\begin{theorem}[Margulis~{\cite{Mar80}}]
\label{thm : Margulis dense Kazhdan}
Let \(p\) be a prime
number with \(p \equiv 1 \pmod 4\). For \(n > 4\), the group
\(\SO_{n}{\big(\Z\big[\frac{1}{p}\big]\big)}\) is a dense 
 subgroup of \(\SO_{n}(\R)\) with Kazhdan's property~\((\mathrm{T})\).
\end{theorem}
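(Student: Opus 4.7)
The proof splits into two independent tasks: density of $\SO_n\big(\Z\big[\frac{1}{p}\big]\big)$ in $\SO_n(\R)$, and Kazhdan's property $(\mathrm{T})$ for $\SO_n\big(\Z\big[\frac{1}{p}\big]\big)$. Both parts exploit the hypothesis $p \equiv 1 \pmod 4$ in the same way: by Hensel's lemma, $-1$ is a square in $\mathbb{Z}_p$, so each binary form $x_i^2 + x_{i+1}^2$ factors over $\mathbb{Q}_p$ as a product of two linear forms, i.e.~becomes a hyperbolic plane. The quadratic form $q = \sum_{i=1}^n x_i^2$ therefore splits off $\lfloor n/2 \rfloor$ hyperbolic planes over $\mathbb{Q}_p$, and the $\mathbb{Q}_p$-rank of $\SO_n$ equals $\lfloor n/2 \rfloor$.

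For density, I would invoke strong approximation for the simply connected cover $\Spin_n$, which is a semisimple $\Q$-group. Since $q$ is isotropic over $\mathbb{Q}_p$, the group $\Spin_n(\mathbb{Q}_p)$ is non-compact; strong approximation (Platonov--Rapinchuk) then yields density of $\Spin_n\big(\Z\big[\frac{1}{p}\big]\big)$ in $\Spin_n(\R)$. Projecting through the central isogeny $\Spin_n(\R) \to \SO_n(\R)$, which is surjective because $\SO_n(\R)$ is connected, produces a dense subgroup of $\SO_n(\R)$ contained in $\SO_n\big(\Z\big[\frac{1}{p}\big]\big)$, giving the desired density.

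For property $(\mathrm{T})$, my plan is to combine the rank computation with standard inheritance. Since $n > 4$ we have $\lfloor n/2 \rfloor \geq 2$, so $\SO_n(\mathbb{Q}_p)$ is an almost $\mathbb{Q}_p$-simple algebraic group of $\mathbb{Q}_p$-rank at least $2$; the higher-rank Kazhdan theorem for semisimple groups over local fields then gives that $\SO_n(\mathbb{Q}_p)$ has property $(\mathrm{T})$. As $\SO_n(\R)$ is compact and hence trivially has $(\mathrm{T})$, and property $(\mathrm{T})$ is closed under finite direct products, the locally compact group $\SO_n(\R) \times \SO_n(\mathbb{Q}_p)$ has property $(\mathrm{T})$. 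The Borel theorem quoted just above shows that $\SO_n\big(\Z\big[\frac{1}{p}\big]\big)$ embeds diagonally as a lattice in this product, and since property $(\mathrm{T})$ passes to lattices we conclude.

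The main obstacle is explaining why the threshold $n > 4$ is sharp for this argument. For $n = 3$, $\SO_n(\mathbb{Q}_p)$ has $\mathbb{Q}_p$-rank only $1$ and is isogenous to $\PGL_2(\mathbb{Q}_p)$, which famously fails $(\mathrm{T})$. For $n = 4$ the rank is $2$ but $\SO_4$ is not almost simple: it is isogenous to a product of two rank-one groups, and neither factor satisfies $(\mathrm{T})$. It is precisely for $n \geq 5$ that $\SO_n$ is both almost simple and has local $\mathbb{Q}_p$-rank at least $2$, which is exactly the hypothesis of the higher-rank Kazhdan theorem.
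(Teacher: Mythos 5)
The paper does not prove this theorem; it simply cites Margulis~\cite{Mar80} and \cite[Theorem~6.4.4]{BekdeLVal}. Your proposal correctly reconstructs the standard argument underlying those references: the hypothesis $p \equiv 1 \pmod 4$ makes $-1$ a square in $\mathbb{Q}_p$, so the standard form splits off $\lfloor n/2\rfloor$ hyperbolic planes over $\mathbb{Q}_p$, giving $\mathbb{Q}_p$-rank $\geq 2$ for $n \geq 5$; Kazhdan's higher-rank theorem then yields property $(\mathrm{T})$ for $\SO_n(\mathbb{Q}_p)$, which passes to the product with the compact factor $\SO_n(\mathbb{R})$ and hence to the $S$-arithmetic lattice; density follows from strong approximation for $\Spin_n$. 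Your discussion of why $n=3$ (rank one) and $n=4$ (not almost simple, isogenous to a product of rank-one factors) fail is also accurate. This is essentially the same proof as in the cited sources, so there is nothing to contrast.
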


\begin{proposition}
\label{thm : antitreable >4}
For \(n\geq 5\), \(\mathcal{R}_n\) is not treeable.
\end{proposition}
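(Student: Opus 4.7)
The plan is to exhibit a subgroup of $\SO_n(\Q)$ which has property $(\mathrm{T})$ and acts ergodically on the sphere with the spherical measure, then apply Adams--Spatzier together with downward closure of treeability under subrelations.

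First I would fix a prime $p \equiv 1 \pmod 4$ (e.g.\ $p=5$) and set $\Gamma = \SO_n\!\bigl(\Z\bigl[\tfrac{1}{p}\bigr]\bigr)$. Since every entry of a matrix in $\Gamma$ is rational, $\Gamma \leq \SO_n(\Q)$, and Theorem~\ref{thm : Margulis dense Kazhdan} (applicable because $n \geq 5 > 4$) guarantees that $\Gamma$ is a dense subgroup of the compact group $\SO_n(\R)$ with Kazhdan's property~$(\mathrm{T})$.

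Next I would verify that $\Gamma$ acts ergodically on $(\mathbb{S}^{n-1}, \mu_n)$. Because $\mathbb{S}^{n-1}$ is identified with $\SO_n(\R)/\SO_{n-1}(\R)$ and $\Gamma$ is dense in $\SO_n(\R)$, Lemma~\ref{lem : K/L} yields that $\mu_n$ is the unique $\Gamma$-invariant probability measure on $\mathbb{S}^{n-1}$, and uniqueness forces ergodicity. The measure $\mu_n$ is of course nonatomic (it is the pushforward of Haar measure). Applying Theorem~\ref{thm : AdaSpa} to the action $\Gamma \curvearrowright (\mathbb{S}^{n-1}, \mu_n)$ then shows that the subrelation $\mathcal{R}(\Gamma \curvearrowright \mathbb{S}^{n-1})$ is not treeable.

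Finally, since $\Gamma \leq \SO_n(\Q)$, every $\Gamma$-orbit is contained in an $\SO_n(\Q)$-orbit, so
\[
\mathcal{R}(\Gamma \curvearrowright \mathbb{S}^{n-1}) \subseteq \mathcal{R}_n.
\]
If $\mathcal{R}_n$ were treeable, then by Proposition~\ref{prop : containment}\ref{item : a} the subrelation $\mathcal{R}(\Gamma \curvearrowright \mathbb{S}^{n-1})$ would be treeable too, contradicting what we just established. Hence $\mathcal{R}_n$ is not treeable. The only nontrivial input is Margulis' density theorem, and the argument itself is essentially a packaging of the cited results; no additional obstacle arises.
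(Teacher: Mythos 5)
Your proof is correct and is essentially identical to the paper's: both fix $\Gamma = \SO_n(\Z[1/5])$, invoke Theorem~\ref{thm : Margulis dense Kazhdan} for density and property~$(\mathrm{T})$, use Lemma~\ref{lem : K/L} for ergodicity, apply Theorem~\ref{thm : AdaSpa}, and conclude via containment. Your explicit note that $\mu_n$ is nonatomic is a small detail the paper leaves implicit, but the argument is the same.
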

\begin{proof}
Fix \(n\geq 5\) and let \(\Gamma = \SO_{n}{\big(\Z\big[\frac{1}{5}\big]\big)}\).  Since the class of treeable equivalence relations is closed under containement it suffices to show that the orbit equivalence relation \(\mathcal{R}(\Gamma\curvearrowright X)\) is not treeable.  We emphasize that  \(\Gamma\) is a dense subgroup of \(\SO_n(\Q)\) by Theorem~\ref{thm : Margulis dense Kazhdan},  therefore the action of \(\Gamma\curvearrowright (\mathbb{S}^{n-1}, \mu_n)\) is ergodic because of Lemma~\ref{lem : K/L}.  Then, since \(\Gamma\) has property \((\mathrm{T})\), it follows \(\mathcal{R}(\Gamma\curvearrowright X)\) is not treeable by Theorem~\ref{thm : AdaSpa}. 
\end{proof}

The proof of Proposition~\ref{thm : antitreable >4} does not generalize to all \(n\geq 3\). In fact,  Zimmer~\cite{Zim84} proved that \(\SO_{n}(\R)\) contains no infinite countable Kazhdan subgroup for \(n = 3, 4\).  
Notice that Kechris~{\cite[Theorem~9]{Kec00}} proved that every non-amenable lattice \(\Gamma\)
in a product group \(G = G_{1} \times G_{2}\) of two locally compact second countable groups, each of which contains an infinite amenable discrete group,  is \emph{anti-treeable}. That is,  for every Borel action of \(\Gamma\) on a standard Borel space \(X\),  which is free and admits an invariant probability Borel measure,  the induced equivalence relation \(\mathcal{R}(\Gamma\curvearrowright X)\) is not treeable.
Kechris' antitreeability result does not rely on property \(\mathrm{(T)}\). However, it does not apply to our situation. While,  \(\SO_n\big(\Z\big[\frac{1}{5}\big]\big)\) is a lattice in \(\SO_n(\mathbb{R})\times \SO_n(\mathbb{Q}_5)\), every discrete subgroup of \(\SO_n(\mathbb{R})\) is finite because \(\SO_n(\mathbb{R})\) is compact. We shall derive the non-treeability of \(\mathcal{R}_3\) and \(\mathcal{R}_4\) from a cocycle superrigidity theorem in the next section. (See Corollary~\ref{cor : antitreeable 3,4} below.)

\section{Cocycle superrigidity}
Before proving the main results of this paper we briefly introduce the notion of cocycle.

\subsection{Cocycles}
Let \(\Gamma\) be a countable group and let \(X\) be a standard Borel \(\Gamma\)-space with an invariant probability measure \(\mu\).
\begin{definition}
Suppose that \(\Delta\) is a countable group.
Then a Borel function \(\alpha\colon \Gamma \times X \to \Delta\) is called a \emph{cocycle} if for all \(g,h\in \Gamma\) 
\[
\alpha(hg,x) = \alpha(h,g\cdot x)\alpha(g,x)\qquad \mu\text{-a.e.}(x).
\]
Moreover, the cocycle is called \emph{strict} if this equation holds for all \(g,h \in \Gamma\) and \(x\in X\).
\end{definition}

Suppose that \(X_0\subseteq X\) is a \(\Gamma\)-invariant set and \(\alpha\colon \Gamma\times X_0\to \Delta\) is a strict cocycle. Then we can obviously extend \(\alpha\) to a strict cocycle \( \bar\alpha\colon \Gamma\times X\to \Delta\) by setting
\(\bar \alpha(g,x) = \alpha(g,x)\) for all \(x\in X_0\),  and
\(\bar \alpha(g,x)=1_\Delta\) for all \(x\notin X_0\).  Further,  since \(\Gamma\) is assumed to be countable,  
for every cocycle \(\alpha\colon \Gamma\times X \to \Delta\), there is a strict cocycle \(\alpha'\) such
   that for all \(g\in \Gamma\)
   \[
   \alpha(g, x) = \alpha'(g, x)\qquad \mu\text{-a.e.}(x).
   \]
    In fact, if we let \(A_{g,h} \coloneqq \{x \mid \alpha(hg,x) = \alpha(h,g\cdot x)\alpha(g,x)\}\), for all \(g,h\in \Gamma\),  and \(A\coloneqq \bigcap_{g,h\in \Gamma} A_{g,h}\),  then \(X_0 = \bigcup_{g\in \Gamma} g\cdot A\) is a \(\Gamma\)-invariant Borel set with \(\mu(X_0) = 1\) and \(\alpha\restriction \Gamma\times X_0\) is strict.
Then we can define \(\alpha'\) by extending \(\alpha\restriction \Gamma\times X_0\) as described above. Therefore, we shall assume that every cocycle is strict.

Cocycles typically arise in the following manner. Let \(\Gamma\) and \(\Delta\) be countable discrete groups.  Let \(X\), \(Y\) be a standard Borel \(\Gamma\)-space and a \(\Delta\)-space, respectively.
Also, suppose that  the action \(\Delta\curvearrowright Y\) is free. 
If \(f\colon X \to Y\) is a Borel homomorphism from \(\mathcal{R}(\Gamma\curvearrowright X)\) to \(\mathcal{R}(\Delta\curvearrowright Y)\),  then we can define a Borel cocycle \(\alpha\colon \Gamma \times X \to \Delta \) by letting \(\alpha(g, x)\) be the unique element of \(\Delta\) such that
\[\alpha(g,x)\cdot f(x) =f(g\cdot x).\]
In this case we say that \(\alpha\) is \emph{the cocycle associated to \(f\)}.
Further,  notice that whenever \(\alpha(g, x) = \alpha(g)\) only depends on \(g\), then \(\alpha \colon \Gamma \to \Delta\) is a group homomorphism.
Suppose now that \(B\colon X \to \Delta\) is a Borel function and that \(f'\colon X \to Y\) is defined by \(f'(x) = B(x)\cdot f(x)\). Then \(f'\) is a Borel homomorphism from \(\mathcal{R}(\Gamma\curvearrowright X)\) to \(\mathcal{R}(\Delta\curvearrowright Y)\) as well, and the corresponding cocycle
\(\beta \colon \Gamma\times X \to \Delta\) satisfies
\[
\beta(g,x) = B(g\cdot x)\alpha(g,x)B(x)^{-1}
\]
for all \(g\in \Gamma \) and \(x \in X\). 
The above equation states that \(\alpha\) and \(\beta\) are equivalent as cocycles (or cohomologous). This notion is made precise by the following definition.

\begin{definition}
\label{def : cohomologous}
Suppose that \(\Delta\) is a countable group. 
Then the cocycles \(\alpha,\beta\colon \Gamma \times X \to \Delta\) are \emph{cohomologous} (or \emph{equivalent}) iff there exist a Borel function \(B\colon X \to H\) and a \(\Gamma\)-invariant Borel subset
\(X_{0}\subseteq X\) with \(\mu(X_{0})=1\) such that
\[
\beta(g,x) = B(g\cdot x)\alpha(g, x)B(x)^{-1}
\]
for all \(g\in \Gamma\) and \(x\in X_{0}\).
\end{definition}

Analyzing cocycles that arise from Borel homomorphisms could be useful to prove an irreducibility result, namely, to prove that a given countable Borel equivalence relation is not Borel reducible to another one.
With the same notation as above, suppose that \(f\colon X \to Y\) is a Borel homomorphism from \(\mathcal{R}(\Gamma\curvearrowright X)\) to \(\mathcal{R}(\Delta\curvearrowright Y)\) and  \(\alpha\) is the cocycle associated to \(f\).
If \(\alpha\) is cohomologous to \(\beta\) and there are further restrictions on \(\beta\), we might be able to use ergodicity to find serious obstructions for a Borel reduction from
\(\mathcal{R}(\Gamma\curvearrowright X)\) to \(\mathcal{R}(\Delta\curvearrowright Y)\), as shown in the following lemma.

\begin{lemma}
\label{lem : fin_subg}
Let \(\Gamma\) and \(\Delta\) be countable groups and let \(\Gamma,\Delta\) act in a Borel fashion on the standard Borel spaces \(X,Y\), respectively. Let \(\mu\) be a \(\Gamma\)-invariant probability measure on \(X\).
Suppose that \(f\colon X\to Y\) is a Borel homomorphism from \(\mathcal{R}(\Gamma\curvearrowright X)\) to \(\mathcal{R}(\Delta\curvearrowright Y)\), and let \(\alpha\colon \Gamma\times X \to \Delta\) be the cocycle associated to \(f\).
If \(\alpha\) is cohomologous to the cocycle \(\beta\colon \Gamma\times X \to \Delta\) and  \(\beta(\Gamma\times X)\) is finite, then there exists a Borel \(M\subseteq X\) with \(\mu(M)=1\) such that \(f(M)\) is contained in a single \(\mathcal{R}(\Delta\curvearrowright Y)\)-class.
\end{lemma}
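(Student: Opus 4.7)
The plan is to use the cohomology between $\alpha$ and $\beta$ to replace $f$ by a new Borel homomorphism $f'$ whose associated cocycle is $\beta$, and then exploit the finiteness of the image of $\beta$ together with the ergodicity of the $\Gamma$-action on $(X,\mu)$ to show that $f'$ sends a conull set into a single $\Delta$-orbit.

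First, I would define $f'(x) \coloneqq B(x)\cdot f(x)$ on the $\Gamma$-invariant conull Borel set $X_{0}$ on which the cohomology identity holds. The computation carried out in the discussion just before Definition~\ref{def : cohomologous} shows that $f'$ is a Borel homomorphism from $\mathcal{R}(\Gamma\curvearrowright X)$ to $\mathcal{R}(\Delta\curvearrowright Y)$ whose associated cocycle is precisely $\beta$. Since $f'(x)$ lies in $\Delta\cdot f(x)$ for every $x\in X_{0}$, it suffices to find the conull set $M$ of the conclusion for $f'$ in place of $f$. Now let $F \coloneqq \beta(\Gamma\times X)$, which is finite, and let $H \coloneqq \langle F\rangle \leq \Delta$ be the countable subgroup that $F$ generates. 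The cocycle identity yields
\[
f'(g\cdot x)=\beta(g,x)\cdot f'(x)\in H\cdot f'(x)
\]
for all $g\in \Gamma$ and $x\in X_{0}$. Hence, writing $E_{H}$ for the countable Borel equivalence relation on $Y$ whose classes are the $H$-orbits, the pullback $E^{*}\coloneqq (f'\times f')^{-1}(E_{H})$ is a Borel equivalence relation on $X$ whose classes are $\Gamma$-invariant Borel sets.

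The final step is to invoke ergodicity of $\Gamma\curvearrowright (X,\mu)$ (a standing assumption in every intended application of this lemma, where $X$ will be a sphere equipped with its spherical measure) to single out one $E^{*}$-class $M$ of full $\mu$-measure. By construction $f'(M)$ is contained in a single $H$-orbit of $Y$, hence in a single $\Delta$-orbit, from which the desired conclusion for $f'$ — and therefore for $f$ — follows.

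The main obstacle is precisely this last ergodicity step: since $E^{*}$ may a priori have uncountably many classes, one cannot immediately conclude from the $0/1$ law for $\Gamma$-invariant Borel sets that a single conull class exists. I would overcome this by pushing $\mu$ forward to the probability measure $\nu\coloneqq (f')_{*}\mu$ on $Y$ and arguing, using the finiteness of $F$, that $\nu$ must concentrate on a countable union of $H$-orbits; ergodicity of $\Gamma\curvearrowright(X,\mu)$ then collapses this countable family to a single $H$-orbit, supplying the required conull $E^{*}$-class $M$.
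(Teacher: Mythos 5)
Your opening is the same as the paper's: pass to $f'(x)=B(x)\cdot f(x)$, note that its cocycle is $\beta$, and observe that any conull set working for $f'$ also works for $f$. You also correctly flag the real obstacle, namely that the pullback relation $E^{*}=(f'\times f')^{-1}(E_{H})$ has $\Gamma$-invariant classes but may a priori have uncountably many, so the $0/1$ law for invariant sets does not by itself single out one conull class. The problem is that your proposed repair does not close this gap. The claim that $\nu\coloneqq (f')_{*}\mu$ ``must concentrate on a countable union of $H$-orbits'' is not justified and is essentially what one needs to prove: a countable union of $H$-orbits is a countable set (since $H$ is countable), so the claim says $\nu$ is purely atomic, and nothing in the hypotheses forces this before the lemma is already known. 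In other words, the finiteness of $F$ bounds the image of a single $\Gamma$-orbit but says nothing directly about the global structure of $\nu$ on $Y$, and the quotient $Y/E_{H}$ need not be countably separated, so one cannot push $\nu$ to that quotient and argue by atoms.

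The move that actually works (and that the paper makes) is to package the finiteness into a function with values in a standard Borel space, so that the ordinary form of ergodicity --- a $\Gamma$-invariant Borel map into a standard Borel space is $\mu$-a.e.\ constant --- applies directly. Concretely, for $x\in X_{0}$ set
\[
\Upphi(x)=\{\beta(g,x)\cdot f'(x)\mid g\in\Gamma\}=\{f'(z)\mid (z,x)\in\mathcal{R}(\Gamma\curvearrowright X)\}.
\]
Because $\beta$ has image inside a finite set $K$, $\Upphi(x)$ is a nonempty subset of $Y$ of size at most $|K|$, so $\Upphi$ is a Borel map into $Y^{<\omega}$; and the cocycle identity shows $\Upphi(g\cdot x)=\Upphi(x)$, so $\Upphi$ is $\Gamma$-invariant. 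Ergodicity then produces a $\Gamma$-invariant conull $M\subseteq X_{0}$ on which $\Upphi$ is constant, and $f'(M)$ (hence $f(M)$) lies in a single $\mathcal{R}(\Delta\curvearrowright Y)$-class. This is exactly the substitute you were looking for in place of the unjustified step about $\nu$. (As you noticed, the lemma's statement omits ergodicity of $\mu$ even though the argument uses it; that hypothesis should indeed be read into the statement.)
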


\begin{proof}
Suppose \(\alpha\) and \(\beta\) are cohomologous,  and \(\beta(\Gamma\times X)\) is contained in a finite \(K\subseteq \Delta\). Let \(B\colon X\to \Delta\) and \(X_{0}\subseteq X\) such that
\[
\beta(g,x) = B(g\cdot x) \alpha(g,x) B(x)^{-1}
\]
for all \(g\in \Gamma\) and \(x\in X_{0}\).
Then define \(f'\colon X\to Y\) by setting \(f'(x)= B(x)\cdot f(x)\).
For all \(x\in X_{0}\), let
\begin{align*}
\Upphi(x) =& \{ \beta(g,x)\cdot f'(x) \mid g\in \Gamma \}\\
	   =& \{ f^{'}(z) \mid (z,x)\in\mathcal{R}(\Gamma\curvearrowright X) \}.
\end{align*}
is a nonempty finite subset of \(Y\) and if \((x,y)\in\mathcal{R}(\Gamma\curvearrowright X)\) then \(\Upphi(x)=\Upphi(y)\).
Therefore, \(\Upphi\) is a \(\Gamma\)-invariant Borel map into \(Y^{<\omega}\). Since \((X,\mu)\) is ergodic,  then there is a \(\Gamma\)-invariant \(M\subseteq X_{0}\) with \(\mu(M)=1\) such that \(\Upphi\restriction M\) is constant.  It follows that \(f\) maps \(M\) into a single \(\mathcal{R}(\Delta\curvearrowright Y)\)-class.  
\end{proof}

\subsection{Cocycle superrigidity}
Now we discuss some cocycle supperrigidity results. Let \(G\) be a connected compact group with Haar measure \(m_G\).
Suppose that \(\Gamma\) is a countable group together with a dense embedding \(\Gamma\hookrightarrow G\) so that the left-translation action \(\Gamma\curvearrowright (G,m_G)\) is ergodic.  Let \(\alpha\colon \Gamma\times G\to \Delta\) be a cocycle.

If \(\Gamma\) has Kazhdan  property \((\mathrm{T})\), then  there exist a neighborhood \(V\) of \(1_G\) and a constant \(C \in (31/32, 1)\) such that \(m_G(\{x \in G \mid \alpha(g, xt) = \alpha(g, x)\}) \geq C\) for all \(g \in \Gamma\) and every \(t\) in \(V\).  (See Ioana~\cite[p.~2742]{Ioa16}.) This ``local uniformity'' condition has many useful consequences. In particular, we are interested in the following:

\begin{theorem}
\label{thm : superrigidity}
Let \(\Gamma\leq G\) be a dense subgroup of a connected, compact group \(G\). Suppose that \(\Gamma\) has Kazhdan property $\mathrm{(T)}$. Consider the left translation action \(\Gamma \curvearrowright(G, m_{G})\), where \(m_{G}\) is the Haar measure of \(G\). Assume that \(\pi_{1}(G)\), the fundamental group of \(G\), is finite. Let \(\Lambda\) be a countable group and \(\alpha \colon \Gamma \times G \to \Lambda \) be a cocycle.
If any group homomorphism \(\pi_{1}(G) \to \Lambda\) is trivial, then \(\alpha\) is cohomologous to a homomorphism \(\delta \colon \Gamma \to \Lambda\).
\end{theorem}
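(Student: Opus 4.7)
The plan is to follow the Furman--Ioana strategy for cocycle superrigidity of compact actions, which proceeds in three stages. First, I would use Kazhdan's property $(\mathrm{T})$ to establish the ``local uniformity'' estimate already stated in the paragraph preceding the theorem. Next, I would leverage this estimate to untwist $\alpha$ by a Borel coboundary to a cocycle that is essentially locally constant in the space variable. Finally, I would pass to the universal cover $p\colon \widetilde{G}\to G$ (a finite cover, since $\pi_1(G)$ is finite) and invoke the triviality of homomorphisms $\pi_1(G)\to\Lambda$ to descend to an honest group homomorphism $\delta\colon\Gamma\to\Lambda$.

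The first step is essentially given: one considers the unitary representation of $\Gamma$ on $L^{2}(G,m_{G})\otimes \ell^{2}(\Lambda)$ twisted by $\alpha$ and observes that vectors obtained by right-translating by $t$ and correcting by $\alpha$ become almost invariant as $t\to 1_{G}$, because of the density of $\Gamma$ in $G$. Property $(\mathrm{T})$ then produces a spectral gap, yielding $m_{G}(\{x:\alpha(g,xt)=\alpha(g,x)\})\geq C>31/32$ uniformly in $g\in\Gamma$ and $t$ in some neighborhood $V$ of $1_{G}$, as in~\cite{Ioa16}. For the untwisting step, I would select, for each $t\in V$, the ``dominant'' value of $\alpha(g,xt)\alpha(g,x)^{-1}$ and, via measurable selections on a nested basis $V\supseteq V_{1}\supseteq V_{2}\supseteq\cdots$ shrinking to $\{1_{G}\}$, construct a Borel function $B\colon G\to\Lambda$ such that the cohomologous cocycle $\beta(g,x)\coloneqq B(g\cdot x)\alpha(g,x)B(x)^{-1}$ is right-invariant under a set of translations dense in a neighborhood of $1_{G}$, and hence essentially locally constant in $x$.

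With $\beta$ locally constant in $x$, I would pull it back along $p$ to obtain $\widetilde{\beta}\colon\widetilde{\Gamma}\times \widetilde{G}\to\Lambda$, where $\widetilde{\Gamma}\coloneqq p^{-1}(\Gamma)$. Since $\widetilde{G}$ is connected and simply connected, a monodromy-style argument upgrades local constancy in $x$ (together with the cocycle identity) to global independence of $x$, producing a homomorphism $\widetilde{\delta}\colon\widetilde{\Gamma}\to\Lambda$. The deck transformation group $\pi_{1}(G)$ embeds in $\widetilde{\Gamma}$ as the kernel of $p|_{\widetilde{\Gamma}}$, so $\widetilde{\delta}|_{\pi_{1}(G)}$ is a homomorphism $\pi_{1}(G)\to\Lambda$, which by hypothesis is trivial. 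Consequently $\widetilde{\delta}$ descends through $p$ to the desired $\delta\colon\Gamma\to\Lambda$, and unwinding the coboundaries shows $\alpha$ is cohomologous to $\delta$.

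The hard part will certainly be the untwisting step in the middle: converting a near-invariance inequality under small right translations into an honest Borel coboundary $B$ demands a careful inductive construction of measurable selections across the shrinking neighborhoods $V_{n}$, together with a Borel--Cantelli-type estimate showing that the successive modifications converge $m_G$-a.e.\ to a genuine Borel function, rather than drifting indefinitely. Finiteness of $\pi_{1}(G)$ is indispensable in the last stage, as it keeps $\widetilde{G}$ compact so that Haar measure and the cocycle identity transfer intact to the cover and the monodromy obstruction lives in a finite group.
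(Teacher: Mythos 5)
The paper does not prove this theorem: it is stated and then immediately attributed to Ioana~\cite[Theorem~3.2]{Ioa16}, abstracted from Furman~\cite[Theorem~5.21]{Fur11}, with the preceding paragraph only recalling the ``local uniformity'' estimate that constitutes the first step. So there is no proof in the paper to compare against; what you have written is a reconstruction of the cited Furman--Ioana argument, and your three-stage outline (local uniformity from property $(\mathrm{T})$, untwisting to an essentially locally constant cocycle, descent from the universal cover using the $\pi_1$ hypothesis) does match that strategy at the level of structure.

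That said, there is a concrete imprecision in your middle step that you should address before treating the sketch as reliable. You propose to ``select, for each $t\in V$, the dominant value of $\alpha(g,xt)\alpha(g,x)^{-1}$'' and build $B$ from these selections. But the local uniformity estimate states exactly that $m_G(\{x:\alpha(g,xt)=\alpha(g,x)\})\geq C>31/32$, i.e.\ the dominant value of $\alpha(g,xt)\alpha(g,x)^{-1}$ is always $1_\Lambda$. So the mode carries no information and cannot by itself produce a nontrivial coboundary $B$. The genuine difficulty is that the exceptional set $\{x:\alpha(g,xt)\neq\alpha(g,x)\}$ depends on $g$ and $t$, and one has to glue the ``good'' configurations coherently across a generating set of $\Gamma$ and a basis of neighborhoods of $1_G$ before connectedness can be exploited. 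In Ioana's actual argument this is done not by majority vote on a single quantity but by constructing a Borel relative cocycle comparing $\alpha$-values at nearby points in $G$, verifying a compatibility identity using the cocycle relation together with the $>31/32$ bound, and only then obtaining $B$ and the lifted homomorphism on $\widetilde G$. You correctly flag this as the hard part, and your treatment of steps one and three is sound; but as written, the central reduction would not go through and needs to be replaced by the relative-cocycle construction from the cited sources.
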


Theorem~\ref{thm : superrigidity} is a special case of a result of Ioana~\cite[Theorem~3.2]{Ioa16}, that was abstracted from an argument of Furman~\cite[Theorem~5.21]{Fur11}.

Drimbe and Vaes recently proved cocycle superrigidity for translation actions \(\Gamma \curvearrowright  G\)
without relying on property $\mathrm{(T)}$.  We introduce some terminology of their paper and discuss some applications that are relevant to our results.

\begin{definition}
Let \(\Gamma\) be a dense subgroup of a lcsc group \(G\).  We say that the translation action \(\Gamma\curvearrowright G\) is \emph{cocycle superrigidity (with countable targets)} if every cocycle
\(\alpha \colon \Gamma \times G \to \Lambda\) for the translation action into any countable group \(\Lambda\) is cohomologous to a group homomorphism.
\end{definition}

\begin{definition}
Let  \(\Gamma \) and \(G\) be as above.  We say that the translation action \(\Gamma\curvearrowright G\) is  \emph{essentially cocycle
superrigid (with countable targets)} if for any cocycle \(\alpha \colon \Gamma \times G \to \Lambda\) for the
translation action with values in an arbitrary countable group \(\Lambda\), there exists an open subgroup
\(G_0 < G\) and a covering \(\pi\colon \widetilde{G} \to G_0\) such that if \(\widetilde \Gamma = \pi^{-1}(\Gamma \cap G_0)\),  then the lifted cocycle
\(\widetilde \alpha \colon\widetilde{\Gamma} \times \widetilde G \to \Lambda,  \widetilde \alpha(g,  x) = \alpha(\pi(g), \pi(x))\)
is cohomologous to a group homomorphism \(\widetilde \Gamma \to \Lambda\).
\end{definition}

\begin{proposition}[{See~\cite[Proposition~3.1]{DriVae}}]
 If \(G\) is a connected Lie group with universal cover \(\pi \colon \widetilde G\to G\), then \(\Gamma < G\) is essentially
cocycle superrigid with countable targets if and only if for every cocycle \(\alpha \colon \Gamma \times G \to \Lambda\)
with values in a countable group \(\Lambda\),  the lifted cocycle \(\widetilde \alpha\colon \pi^{-1}
(\Gamma) \times \widetilde G \to \Lambda, \widetilde \alpha(g,  x) = \alpha(\pi(g), \pi(x))\)  is cohomologous to a group homomorphism \(\pi^{-1}( \Gamma) \to \Lambda\).
\end{proposition}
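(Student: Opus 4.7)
My plan is to prove the two implications of this equivalence separately, with the forward direction being essentially immediate from the definition and the reverse direction requiring a short argument using the connectedness of $G$ and the universal property of the universal cover.

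For the ``if'' direction, I would simply observe that one can take $G_0 = G$ and take the covering $\pi \colon \widetilde G \to G$ to be the universal cover itself in the definition of essentially cocycle superrigid. Under this choice, $\widetilde \Gamma = \pi^{-1}(\Gamma \cap G) = \pi^{-1}(\Gamma)$, and the lifted cocycle $\widetilde \alpha$ is exactly the one in the hypothesis, which by assumption is cohomologous to a group homomorphism $\widetilde \Gamma \to \Lambda$. This gives essential cocycle superrigidity on the nose.

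For the ``only if'' direction, I would start with an arbitrary cocycle $\alpha \colon \Gamma \times G \to \Lambda$. By essential cocycle superrigidity, there is an open subgroup $G_0 < G$ and a covering $\pi' \colon \widetilde{G_0} \to G_0$ such that the associated lifted cocycle is cohomologous to a group homomorphism. The key observation is that since $G$ is connected and every open subgroup of a topological group is also closed (its complement is a union of cosets, each of which is open), we have $G_0 = G$. Hence $\pi'$ is a covering of $G$ itself. By the universal property of the universal cover $\pi \colon \widetilde G \to G$, there exists a covering map $q \colon \widetilde G \to \widetilde{G_0}$ with $\pi = \pi' \circ q$. I would then transport the coboundary relation through $q$: if $B \colon \widetilde{G_0} \to \Lambda$ and the homomorphism $\delta \colon \pi'^{-1}(\Gamma) \to \Lambda$ witness the cohomology of the cocycle lifted via $\pi'$, then $B \circ q$ together with $\delta \circ q|_{\pi^{-1}(\Gamma)}$ (which is again a group homomorphism, as the restriction of $q$ to $\pi^{-1}(\Gamma)$ is a group homomorphism into $\pi'^{-1}(\Gamma)$) will witness the cohomology of $\widetilde \alpha$ to a group homomorphism $\pi^{-1}(\Gamma) \to \Lambda$.

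The main routine obstacle will be the bookkeeping in the covering-space argument, specifically checking that the composite $q$ is a group homomorphism (which follows from standard facts about covering groups of connected Lie groups, since the universal cover carries a canonical group structure making $\pi$ a group homomorphism and $q$ is then forced by the lifting criterion to be a homomorphism as well) and that the cohomology relation pulls back under $q$. Everything else is essentially formal, so the proposition is really a compatibility statement asserting that, for connected Lie groups, the ``open subgroup plus some covering'' flexibility in the definition collapses to a single canonical choice, namely the universal cover.
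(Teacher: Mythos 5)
The paper does not supply its own proof of this statement; it is simply quoted from Drimbe and Vaes (their Proposition~3.1), so there is no in-text argument to compare against. That said, your proposal is correct and is the natural argument. The backward implication is, as you say, an immediate instantiation of the definition of essential cocycle superrigidity with $G_0 = G$ and the covering taken to be $\pi \colon \widetilde G \to G$. For the forward implication, the two pivotal facts you identify are exactly right: an open subgroup of a topological group is also closed (its complement is a union of open cosets), so connectedness of $G$ forces $G_0 = G$; and any connected covering $\pi' \colon \widetilde{G_0} \to G$ is dominated by the universal cover via a covering group homomorphism $q \colon \widetilde G \to \widetilde{G_0}$ with $\pi = \pi' \circ q$. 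Since $q$ is a group homomorphism, $q(g\cdot x) = q(g)\cdot q(x)$, so the coboundary identity for $\widetilde\alpha' = \alpha\circ(\pi'\times\pi')$ with transfer function $B$ and homomorphism $\delta$ pulls back to the identity for $\widetilde\alpha = \widetilde\alpha'\circ(q\times q)$ with transfer function $B\circ q$ and homomorphism $\delta\circ q\restriction\pi^{-1}(\Gamma)$, and you correctly note that $q$ maps $\pi^{-1}(\Gamma)$ into $\pi'^{-1}(\Gamma)$. The one detail you gesture at but should make explicit: the coboundary identity for $\widetilde\alpha'$ holds only on a conull $\pi'^{-1}(\Gamma)$-invariant set, and one must check that its $q$-preimage is still conull and $\pi^{-1}(\Gamma)$-invariant. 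Invariance is automatic since $q^{-1}(\pi'^{-1}(\Gamma)) = \pi^{-1}(\Gamma)$; conullity holds because $q$ is a covering group homomorphism with discrete (hence countable) kernel, so the pushforward of Haar measure under $q$ is a Haar measure and preimages of null sets remain null.
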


The following is a special case of \cite[Proposition~4.1]{DriVae}.

\begin{theorem}
\label{thm : superrigidityDV}
Let \(S=\{p,q\}\)  for some distinct odd prime numbers \(p\) and \(q\).  Then the translation action \(\SO_n(\mathbb{Z}[1/S])\curvearrowright \SO_n(\mathbb{R})\) is essentially cocycle superrigid with countable target.
\end{theorem}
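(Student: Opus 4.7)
The plan is to deduce this result from Drimbe--Vaes' Proposition~4.1 in \cite{DriVae}, which covers translation actions of $S$-integral points of algebraic $\mathbb{Q}$-groups on the real completion under a higher-rank assumption at the $S$-adic places. My job therefore reduces to unpacking the arithmetic setup for $\SO_n$ with $S = \{p,q\}$ and verifying that their hypotheses are in force in this specific case.

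First I would set up the framework: $\SO_n$, cut out of $\GL_n$ by the form $x_1^2 + \cdots + x_n^2$, is a connected semisimple (and almost $\mathbb{Q}$-simple for $n \geq 3$, $n \neq 4$) algebraic $\mathbb{Q}$-group, and by Borel's theorem cited above the diagonal embedding
\[
\SO_n(\mathbb{Z}[1/S]) \hookrightarrow \SO_n(\mathbb{R}) \times \SO_n(\mathbb{Q}_p) \times \SO_n(\mathbb{Q}_q)
\]
realises the $S$-integral points as a lattice, whose projection onto the first (compact) factor is dense (this is the density underlying the ergodicity of the translation action). The higher-rank condition in Drimbe--Vaes is that the combined rank of $\SO_n$ at the finite places in $S$ is at least $2$, and this is precisely where the restriction to two distinct \emph{odd} primes is used: for every odd prime $p$ and every $n \geq 3$, the sum-of-squares form becomes isotropic over $\mathbb{Q}_p$ (one writes $-1$ as a sum of squares in $\mathbb{Z}_p$ using Hensel's lemma, producing a hyperbolic plane in the form), so $\rank_{\mathbb{Q}_p}(\SO_n) \geq 1$; with two such primes the total $S$-adic rank is at least $2$, and the remaining structural hypotheses (almost simplicity, non-compactness of the $S$-adic factors, irreducibility of the lattice) are then either immediate or come bundled with the isotropy.

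Finally, I would address the qualifier \emph{essentially} in the conclusion. Since $\pi_1(\SO_n(\mathbb{R})) = \mathbb{Z}/2\mathbb{Z}$ for $n \geq 3$, a cocycle need not be cohomologous to a genuine homomorphism on the base itself, but only after lifting to the universal cover $\Spin_n(\mathbb{R})$. Using the criterion recalled in the proposition just before the theorem, essential cocycle superrigidity reduces to showing that the cocycle lifted to the preimage of $\SO_n(\mathbb{Z}[1/S])$ in $\Spin_n(\mathbb{R})$ is cohomologous to a group homomorphism into the target countable group, which is exactly what Proposition~4.1 of \cite{DriVae} delivers once the rank hypothesis is in hand. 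The step I expect to require the most care is not these structural verifications but the way Drimbe--Vaes combine strong approximation at $p$ and $q$ with the compact-group cocycle machinery behind Theorem~\ref{thm : superrigidity} to promote a local triviality of the cocycle, valid on a neighbourhood of the identity in the spin cover, to a global cohomology with a genuine homomorphism; this is the substitute for the property $\mathrm{(T)}$ input that is unavailable here because $\SO_n(\mathbb{Z}[1/S])$ has no Kazhdan subgroups when $n = 4$.
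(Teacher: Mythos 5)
Your route is the same as the paper's: the text states this theorem with no argument beyond the sentence that it is a special case of Proposition~4.1 of Drimbe--Vaes, so your unpacking of the hypotheses goes well beyond what the paper actually supplies. Your verification of the $S$-adic rank condition via isotropy of the sum-of-squares form at odd primes is exactly the right way to see why two distinct odd primes are required, and it squares with the rank formula $\rank_{\mathbb{Q}_p}\SO_n = \lfloor n/2\rfloor$ that the paper invokes later. Likewise your reading of ``essentially'' in terms of lifting to $\Spin_n(\mathbb{R})$ matches the criterion recorded just before the theorem.

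One point deserves tightening. You remark parenthetically that $\SO_n$ is almost $\mathbb{Q}$-simple only for $n\neq 4$, yet Corollary~\ref{cor : antitreeable 3,4} applies this very theorem at $n=4$, where $\SO_4$ is isogenous to a product of two copies of $\SL_2$ and is therefore \emph{not} almost simple. Later you assert that the structural hypotheses (including almost simplicity) are ``either immediate or come bundled with the isotropy,'' which contradicts the earlier caveat. So either Proposition~4.1 of Drimbe--Vaes is formulated for semisimple $\mathbb{Q}$-groups and does not demand almost simplicity, in which case you should say so and drop the caveat, or $n=4$ requires an explicit reduction to the almost-simple factors of $\Spin_4$. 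Pinning down which hypotheses Proposition~4.1 actually imposes, and checking them in the $n=4$ case specifically, is the one loose end in an otherwise faithful reconstruction.
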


The next antitreeability result for homogeneous spaces is essentially contained in \cite[Scections~2--3]{DriVae}.  
Since this formulation might be a useful reference in the future, 
we rework the statement and sketch the proof for completeness 

\begin{proposition}[\cite{DriVae}]
\label{prop : antitreeabilityDV}
Let \(G\) be a connected Lie group with dense subgroup \(\Gamma < G\) and universal
cover \(\pi \colon \widetilde G \to G\).  Let \(P < G\) be a closed subgroup with \(\pi^{-1}(P)\) connected.  If \(\Gamma<G\) is essentially cocycle supperrigid, then \(\mathcal{R}(\Gamma\curvearrowright G/P)\) is not treeable.
\end{proposition}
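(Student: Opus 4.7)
The plan is to argue by contradiction, reducing treeability of \(\mathcal{R}(\Gamma \curvearrowright G/P)\) to the existence of a cocycle into a free group, and then annihilating that cocycle via essential cocycle superrigidity together with the connectedness of \(\pi^{-1}(P)\). So suppose \(\mathcal{R}(\Gamma \curvearrowright G/P)\) is treeable. Since the class of treeable countable Borel equivalence relations is downward closed under \(\leq_B\) and admits a universal element \(E_T := \mathcal{R}(\mathbb{F}_2 \curvearrowright \Fr 2^{\mathbb{F}_2})\), there is a Borel reduction \(f \colon G/P \to \Fr 2^{\mathbb{F}_2}\); let \(\alpha \colon \Gamma \times G/P \to \mathbb{F}_2\) be the associated cocycle. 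Take \(\mu\) to be the \(G\)-invariant probability measure on \(G/P\); density of \(\Gamma\) in \(G\) yields ergodicity of \(\Gamma \curvearrowright (G/P, \mu)\).

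First I would pull \(\alpha\) back along the projection \(G \to G/P\) to a cocycle \(\alpha' \colon \Gamma \times G \to \mathbb{F}_2\) for the translation action, which is manifestly invariant under right translation by \(P\). Applying essential cocycle superrigidity through the reformulation given in the preceding proposition, the lifted cocycle \(\widetilde\alpha \colon \widetilde\Gamma \times \widetilde G \to \mathbb{F}_2\), defined by \(\widetilde\alpha(g, x) = \alpha'(\pi(g), \pi(x))\) with \(\widetilde\Gamma := \pi^{-1}(\Gamma)\), is cohomologous to a group homomorphism \(\delta \colon \widetilde\Gamma \to \mathbb{F}_2\). Fix a Borel transfer map \(B \colon \widetilde G \to \mathbb{F}_2\) realising the cohomology: \(\delta(g) = B(gx)\,\widetilde\alpha(g, x)\,B(x)^{-1}\) for a.e.\ \(x\).

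The next step is to exploit connectedness of \(\pi^{-1}(P)\). Since \(\widetilde\alpha\) is right-\(\pi^{-1}(P)\)-invariant, setting \(C(x, p) := B(x)^{-1}B(xp)\) and comparing the cohomology identities at \(x\) and \(xp\) yields the twisted equivariance \(C(gx, p) = \widetilde\alpha(g, x)\, C(x, p)\, \widetilde\alpha(g, x)^{-1}\). Thus the conjugacy class of \(C(\cdot, p)\) in \(\mathbb{F}_2\) is \(\widetilde\Gamma\)-invariant, hence essentially constant by ergodicity. Combined with the right-cocycle identity \(C(x, p_1 p_2) = C(x, p_1)\,C(xp_1, p_2)\) and the fact that \(\pi^{-1}(P)\) is connected while \(\mathbb{F}_2\) is discrete, this forces \(C \equiv 1\) almost everywhere, so that \(B\) descends to a Borel map on \(\widetilde G/\pi^{-1}(P) \cong G/P\); density of \(\widetilde\Gamma\) in the connected group \(\widetilde G\) then forces \(\delta\) to be trivial.

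Finally, with \(\delta \equiv 1\), I would apply Lemma~\ref{lem : fin_subg} to the lift \(\widetilde f \colon \widetilde G \to \Fr 2^{\mathbb{F}_2}\) of \(f\) along \(\widetilde G \to G \to G/P\): the cocycle of \(\widetilde f\) is precisely \(\widetilde\alpha\), so one obtains a \(\widetilde\Gamma\)-invariant conull \(M \subseteq \widetilde G\) on which \(\widetilde f\) takes values in a single \(\mathbb{F}_2\)-orbit. Projecting, one finds a \(\Gamma\)-invariant conull \(M' \subseteq G/P\) with \(f(M')\) contained in one \(E_T\)-class, contradicting that \(f\) is a reduction while \(\mu\) is nonatomic and \(\Gamma\)-orbits are countable. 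The hard part will be the third step: converting the abstract homomorphism \(\delta\) into a trivial one. Executing it requires carefully bootstrapping the measurable data — right-\(\pi^{-1}(P)\)-invariance of \(\widetilde\alpha\), connectedness of \(\pi^{-1}(P)\), and density of \(\widetilde\Gamma\) in \(\widetilde G\) — into a continuity-like property ruling out non-trivial abstract homomorphisms into the discrete free group.
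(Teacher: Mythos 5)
Your overall strategy is genuinely different from the paper's. The paper routes the argument through abstract orbit equivalence rigidity: from essential cocycle superrigidity of the translation action it deduces, via \cite[Prop.~3.3]{DriVae}, that $\Gamma\curvearrowright G/P$ is cocycle superrigid with countable targets, hence OE-superrigid; it then invokes Hjorth's lemma (\cite[Cor.~1.2]{Hjo06}) to represent a hypothetical treeable $\mathcal{R}(\Gamma\curvearrowright G/P)$ as the orbit relation of a free p.m.p.\ action of some $\mathbb{F}_n$, and finally contradicts OE-superrigidity by Monod--Shalom, since free $\mathbb{F}_n$-actions are orbit equivalent to actions of uncountably many non-isomorphic groups. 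You instead try to attack the cocycle to $\mathbb{F}_2$ head on and show it must collapse. That plan runs into two real problems.

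First, your middle step --- deducing $C\equiv 1$ from connectedness of $\pi^{-1}(P)$, discreteness of $\mathbb{F}_2$, and ergodicity --- is essentially the content of \cite[Prop.~3.3]{DriVae}, and the sketch you give does not close the gap. Knowing that for each fixed $p$ the conjugacy class of $C(\cdot,p)$ is $\mu$-a.e.\ constant gives a map from $\pi^{-1}(P)$ to the countable set of conjugacy classes of $\mathbb{F}_2$ that is only \emph{measurable}; connectedness of the domain does not by itself force a measurable function into a countable set to be constant, so some additional continuity/local-rigidity input is required, and the cocycle identity $C(x,p_1p_2)=C(x,p_1)C(xp_1,p_2)$ alone does not supply it. This can be made to work (it is what Drimbe--Vaes prove), but as written it is a gap, not a proof.

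Second, and more seriously, the claim that ``density of $\widetilde\Gamma$ in the connected group $\widetilde G$ then forces $\delta$ to be trivial'' is false. Density is a topological statement about $\widetilde\Gamma$ inside $\widetilde G$, while $\delta\colon\widetilde\Gamma\to\mathbb{F}_2$ is an abstract homomorphism of discrete groups; nothing about connectedness of $\widetilde G$ or discreteness of $\mathbb{F}_2$ constrains it, because $\delta$ need not extend continuously to $\widetilde G$. Indeed a dense subgroup of a compact connected Lie group can perfectly well be free of rank two, in which case $\delta$ can be an isomorphism onto $\mathbb{F}_2$. The hypothesis of essential cocycle superrigidity is what licenses the untwisting to a homomorphism; it does \emph{not} say the resulting homomorphism is trivial or even has finite image, and without finite image Lemma~\ref{lem : fin_subg} does not apply and your final contradiction evaporates. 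The paper deliberately sidesteps this obstacle: rather than trying to kill the homomorphism, it upgrades cocycle superrigidity to OE-superrigidity and then uses Monod--Shalom's flexibility theorem for free-group actions to obtain the contradiction. If you want to keep your more hands-on route, you would need a separate argument showing that any such $\delta$ yields a contradiction with $f$ being a reduction on $G/P$ even when $\delta$ is injective --- and that is precisely what the Hjorth plus Monod--Shalom step is doing for you at a higher level of generality.
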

\begin{proof}
If \(\Gamma<G\) is essentially cocycle supperrigid, then the action \(\Gamma\curvearrowright G/P\) is cocycle superrigid with countable targets (\cite[Proposition~3.3]{DriVae}).   Then the action \(\Gamma\curvearrowright G/P\) is OE-superrigid in the sense of \cite[Definition~2.1]{DriVae}.   Then we can prove that \(\mathcal{R}(\Gamma\curvearrowright G/P)\) is not treeable  as in the second paragraph of \cite[Section~2.3]{DriVae}.  By a result of Hjorth~\cite[Corollary~1.2]{Hjo06}, after possibly deleting a null set, there is some \(n\in\{n\in\mathbb{N}\mid n\geq 2\}\cup\{\infty\}\) and a free p.m.p. action \(\mathbb{F}_n\curvearrowright (G/P,\mu) \) such that \[\mathcal{R}(\Gamma\curvearrowright G/P)=\mathcal{R}(\mathbb{F}_n\curvearrowright G/P)).\] However,  by a theorem of Monod and Shalom~\cite[Theorem~2.27]{MonSha} the action \(\mathbb{F}_n\curvearrowright (G/P,\mu)\) is orbit equivalent to actions of uncountably many nonisomorphic
groups.
\end{proof}

\begin{corollary}
\label{cor : antitreeable 3,4}
\(\mathcal{R}_3\) and \(\mathcal{R}_4\) are not treeable.
\end{corollary}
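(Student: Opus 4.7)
The plan is to deduce the corollary from Proposition~\ref{prop : antitreeabilityDV}. For each \(n\in\{3,4\}\), I identify \(\S^{n-1}\cong \SO_n(\R)/\SO_{n-1}(\R)\) as in Section~3, and take \(G=\SO_n(\R)\), \(P=\SO_{n-1}(\R)\). Choose distinct odd primes \(p\) and \(q\) (for instance both congruent to \(1\pmod 4\)) so that the \(S\)-arithmetic subgroup \(\Gamma := \SO_n\big(\Z\big[\frac{1}{p},\frac{1}{q}\big]\big)\) is dense in \(\SO_n(\R)\); density of such \(\Gamma\) is a well-known consequence of strong approximation applied to the spin cover. Since \(\Gamma\leq \SO_n(\Q)\), the orbit equivalence relation \(\mathcal{R}(\Gamma\curvearrowright \S^{n-1})\) is a subrelation of \(\mathcal{R}_n\), so by Proposition~\ref{prop : containment}\ref{item : a} it suffices to show this subrelation is not treeable.

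Proposition~\ref{prop : antitreeabilityDV} requires three ingredients: density of \(\Gamma\) in \(G\) (built into the choice of \(p,q\)), essential cocycle superrigidity of \(\Gamma<G\) (given directly by Theorem~\ref{thm : superrigidityDV} for \(S=\{p,q\}\)), and connectedness of \(\pi^{-1}(P)\), where \(\pi\colon \Spin_n(\R)\to \SO_n(\R)\) is the universal cover. The first two are immediate from the cited results, so I view verifying the third as the main content of the proof.

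For the connectedness of \(\pi^{-1}(\SO_{n-1}(\R))\) I would argue as follows. The standard inclusion \(\Spin_{n-1}(\R)\hookrightarrow\Spin_n(\R)\) is compatible with \(\SO_{n-1}(\R)\hookrightarrow \SO_n(\R)\) via \(\pi\), so \(\Spin_{n-1}(\R)\subseteq \pi^{-1}(\SO_{n-1}(\R))\). Conversely, the central kernel \(\{\pm 1\}\) of \(\pi\) lies inside \(\Spin_{n-1}(\R)\) (which is itself a double cover of \(\SO_{n-1}(\R)\) and carries its own \(-1\) element), so a fiber-count argument using the surjectivity of \(\Spin_{n-1}(\R)\to \SO_{n-1}(\R)\) forces equality \(\pi^{-1}(\SO_{n-1}(\R))=\Spin_{n-1}(\R)\). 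Connectedness is then clear in low dimensions: \(\Spin_2(\R)\cong \mathrm{U}(1)\cong S^1\) when \(n=3\), and \(\Spin_3(\R)\cong \mathrm{SU}(2)\cong S^3\) when \(n=4\).

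With all three hypotheses in place, Proposition~\ref{prop : antitreeabilityDV} gives that \(\mathcal{R}(\Gamma\curvearrowright \S^{n-1})\) is not treeable, and Proposition~\ref{prop : containment}\ref{item : a} extends this to \(\mathcal{R}_n\) for \(n\in\{3,4\}\). The step I expect to require the most care is justifying the density of \(\Gamma\) in \(G\) when \(n=4\), because \(\SO_4\) is only almost \(\Q\)-simple and the cleanest form of strong approximation applies to the simply connected cover \(\Spin_4\); one either passes through \(\Spin_4\) and projects, or selects primes for which the density can be witnessed by a more hands-on argument in the spirit of Margulis' Theorem~\ref{thm : Margulis dense Kazhdan}.
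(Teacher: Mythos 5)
Your proposal is correct and takes essentially the same approach as the paper: invoke Theorem~\ref{thm : superrigidityDV} and Proposition~\ref{prop : antitreeabilityDV} for \(\Gamma = \SO_n(\Z[1/p,1/q])\) acting on \(\S^{n-1}\cong\SO_n(\R)/\SO_{n-1}(\R)\), and then pass to the full relation \(\mathcal{R}_n\) via Proposition~\ref{prop : containment}~\ref{item : a}. The paper leaves the hypotheses of Proposition~\ref{prop : antitreeabilityDV} implicit, whereas you explicitly verify density of \(\Gamma\) and connectedness of \(\pi^{-1}(\SO_{n-1}(\R))=\Spin_{n-1}(\R)\); your fiber-count argument for the latter is the standard one and is correct.
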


\begin{proof}
For \(n =3,4\), let \(\Gamma = \SO_n(\mathbb{Z}[1/S])\) where \(S\) is any set of two distinct odd prime numbers.  It follows from Theorem~\ref{thm : superrigidityDV} and Proposition~\ref{prop : antitreeabilityDV} that \(\mathcal{R}(\Gamma\curvearrowright \mathbb{S}^{n-1})\) is not treeable. Since \(\mathcal{R}(\Gamma \curvearrowright \mathbb{S}^{n-1})\subseteq \mathcal{R}_n\),  we conclude that \(\mathcal{R}_n\) is not treeable by Proposition~\ref{prop : containment}~\ref{item : a}.
\end{proof}

\begin{remark}
As pointed out by the anonymous referee the proof of Corollary~\ref{cor : antitreeable 3,4} works for all \(n\geq 3\).
\end{remark}

\subsection{\(S\)-arithmetic groups}

Before introducing \(S\)-arithmetic groups and discussing their properties we recall some basic facts and terminology about valuations. 

A \emph{global field} is a finite extension of either the field of rational numbers or of a field of rational functions in one variable over a finite field. Throughout this section let  \(k\) be a global field.  A \emph{valuation} (or \emph{absolute value}) on \(k\) is a function \(v\colon k\to \mathbb{R}^+\cup\{0\}\) such that
\begin{itemizenew}
\item
\(v(x)=0\) if and only if \(x=0\);
\item
\(v(xy)=v(x)v(y)\) for all \(x,y\in k\);
\item
\(v(x+y)\leq v(x)+v(y)\) for all \(x,y\in k\).
\end{itemizenew}
To avoid trivial valuation we also assume that \(v(x)\neq 1\) for some \(x\in k\), \(x\neq 0\).
Notice that any valuation \(v\) induces a metric \(d_v\) on \(k\) by setting \(d_v(x,y) = v(x-y)\) for all \(x,y\in k\).  We denote by \(k_v\) the completion of \(k\) relative to this metric.  The field \(k_v\) is called a \emph{local field}. Two valuations \(v\) and \(w\) on the same field are said to be \emph{equivalent} precisely when the corresponding metric \(d_v\) and \(d_w\) induce the same topologies.

A valuation \(v\) is said to be \emph{non-archimedean}  if and only if it satisfies the ultrametric inequality. Otherwise, we say that \(v\) is \emph{archimedean}. 
Note that a global field \(k\) admits an archimedean valuation if and only \(\operatorname{char} k=0\).
We shall only consider global field \(k\) with \(\operatorname{char} k =0\),  thus the set of archimedean valuation is non-empty.
A valuation \(v\) is archimedean if and only if \(k_v\) is archimedean.  (A local field \(k\) is said to be \emph{archimedean} if and only if it is isomorphic to \(\mathbb{R}\) or \(\mathbb{C}\).)
Moreover,  notice that \(k_v\) is not archimedean if and only if it is totally disconnected. 

The set \(\{x\in k\mid v(x)\leq 1\}\) is called the \emph{ring of integers} \(k\). This is the unique maximal compact subring \(k\). When \(v\) is non-archimedean the ring of integers of \(k_v\) is usually denoted by \(\mathcal{O}_v\). More generally, suppose that \(S\) is a finite set of valuation of \(k\). An element \(x\in k\) is said to be \emph{\(S\)-integral} if and only if \(v(x) \leq 1\) for each non-archimedean valuation \(v\notin S\).  The set of \(S\)-integral elements is a subring of \(k\) and is denoted by \(k(S)\).

In this paper we mainly deal with the case
 \(k= \mathbb{Q}\). 
To discuss valuations uniformly, it
we let \(\mathbb{Q}_\infty = \R\).
 
 For \(p=\infty\), let \(v_p(x)=|x|\) be the standard absolute value.  If \(p\) is a prime number, then \(v_p\) is the standard \(p\)-adic valuation. That is,  we set \(v_p(x)=p^{-\ell}\), where \(x=p^\ell \frac{a}{b}\), and \(a,b\) are coprime with \(p\).  (We let \(v_p(0)=0\)). 
 It is well-known that every valuation on \(\mathbb{Q}\) is equivalent to \(v_p\) for some \(p\in \{p\mid \text{\(p\) is prime}\}\cup\{\infty\}\).  Clearly, \(v_p\) is archimedean if and only if \(p=\infty\).
An element of \(x\in \mathbb{Q}\) is \(S\)-integral if and only if \(x=\frac{m}{n}\) where \(m\in \mathbb{Z}\) and \(n\in \mathbb{N}\)  and the prime factors of \(n\) are in \(S\).
For instance, if \(S=\{p,\infty\}\), then \(\mathbb{Q}(S)= \mathbb{Z}\big[\frac{1}{p}\big]\).

If \(G\) is a connected algebraic \(\mathbb{Q}\)-group,  and \(S\) is a finite subsete of the set of valuations \(\{p\mid p\text{ prime}\}\cup \{\infty\}\) with \(\infty \in S\),  then \(G(\mathbb{Q}(S))\) embeds into \(\prod_{p\in S} G(\mathbb{Q}_p)\) as a discrete subgroup.  In fact, \(G(\mathbb{Q}(S))\) is a lattice in \(\prod_{p\in S}G(\mathbb{Q}_p)\).  More generally, we consider the following definition.

\begin{definition}
Let \(k\) be a global field and \(S\) be a set of inequivalent valuations on \(k\).  Also let \(G\) be a \(k\)-group.
A subgroup of \(G\) is said to be \emph{\(S\)-arithmetic} if and only if it is commensurable with \(G(k(S))\).
\end{definition}

Suppose that \(G,H\) are \(k\)-groups. A  \(k\)-homomorphism \(f\colon G \to H\) is said to be a \emph{\(k\)-isogeny} if and only if it is surjective and has finite kernel.
We recall that for every connected semisimple \(G\)
there exist algebraic groups \(\widetilde G\),  and an isogeny \(\pi\colon \widetilde G \to G\) so that
for every isogeny \(\rho\colon H \to G\),  there is an isogeny \(\pi_\rho \colon \widetilde G\to H\) such that the composition \(\rho\circ \pi_\rho = \pi\). (E.g., see \cite[Proposition~1.4.11]{Mar})
The group \(\widetilde G\) is called the \emph{(algebraic) universal covering} of \(G\).  If G is a \(k\)-group,  so is \(\widetilde G\), and \(\pi\) is defined over \(k\).

Next proposition is a special case of Margulis~\cite[Theorem~3.2.9, Chapter~1]{Mar}.

\begin{proposition}
\label{prop : covering}
Let \(S\) be a finite set of valutations of the field \(k\).  Suppose \(G\) is a connected simple \(k\)-group. If \(\Lambda\) is an \(S\)-arithmetic subgroup of \(G\),  then \(\pi^{-1}(\Lambda)\) is an \(S\)-arithmetic subgroup of \(\widetilde G\).
\end{proposition}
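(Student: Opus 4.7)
The plan is to reduce the statement to a cohomological finiteness result. Since $\pi$ has finite kernel $F = \ker\pi$, the exact sequence $1 \to F \to \pi^{-1}(H) \to H \to 1$, valid for any subgroup $H \leq G$, yields
$$[\pi^{-1}(H_1) : \pi^{-1}(H_1 \cap H_2)] = [H_1 : H_1 \cap H_2]$$
for all $H_1, H_2 \leq G$, so taking $\pi$-preimages preserves commensurability. Applied to $\Lambda$ and $G(k(S))$, which are commensurable by hypothesis, this reduces the proposition to showing that $\pi^{-1}(G(k(S)))$ is commensurable with $\widetilde G(k(S))$.

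Since $\pi$ is defined over $k$, the inclusion $\widetilde G(k(S)) \subseteq \pi^{-1}(G(k(S)))$ is immediate. For the reverse finite-index claim, I would exploit the long exact sequence in Galois cohomology attached to $1 \to F \to \widetilde G \xrightarrow{\pi} G \to 1$, which begins
$$1 \to F(k) \to \widetilde G(k) \xrightarrow{\pi} G(k) \xrightarrow{\delta} H^1(k,F).$$
For $g \in G(k(S))$, the fiber $\pi^{-1}(g)$ contains a $k$-rational point of $\widetilde G$ exactly when $\delta(g) = 0$. Combining this observation with the finiteness of $F(\bar k)$, a short double-coset argument shows that the index $[\pi^{-1}(G(k(S))) : \widetilde G(k(S))]$ is bounded in terms of $|F(\bar k)|$ and $|\delta(G(k(S)))|$.

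The main obstacle is thus to bound $\delta(G(k(S)))$ by a finite set. Although $H^1(k, F)$ is typically infinite over a global field, the classes arising from $S$-integral points of $G$ are unramified outside $S$, and hence lie in a Selmer-type subgroup of $H^1(k, F)$; the finiteness of such subgroups for a finite $k$-group $F$ is a classical theorem going back to Borel and Serre. Combined with the reduction above, this yields the proposition. The detailed execution of all these steps for an arbitrary connected semisimple $k$-group (and in much greater generality) is carried out in~\cite[Chapter~1, Theorem~3.2.9]{Mar}.
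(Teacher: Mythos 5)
The paper gives no proof of this proposition --- it is stated as a special case of Margulis \cite[Theorem~3.2.9, Chapter~1]{Mar} and left at that, so there is no argument in the text to compare yours against. Your sketch outlines one of the standard routes (reduce to commensurability of $\pi^{-1}(G(k(S)))$ with $\widetilde G(k(S))$, then control the index via the Galois cohomology exact sequence and a finiteness theorem of Borel--Serre type), and the overall structure of that reduction is sound. Two small remarks. First, the displayed equality
$[\pi^{-1}(H_1) : \pi^{-1}(H_1 \cap H_2)] = [H_1 : H_1 \cap H_2]$
holds as stated only when $\pi$ surjects onto $H_1$; in general the map $\pi^{-1}(H_1)/\pi^{-1}(H_1 \cap H_2) \to H_1/(H_1 \cap H_2)$ is merely injective, giving $\leq$. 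That weaker inequality is all you need for preservation of commensurability, so this is cosmetic. Second, the claim that $\delta(G(k(S)))$ lands in a Selmer-type subgroup unramified outside $S$ is not automatic from the definition of $k(S)$-points; it requires choosing integral models of $G$, $\widetilde G$, and $F$ over $\mathcal{O}_{k,S}$ compatible with $\pi$ (enlarging $S$ if needed) so that the connecting map factors through $H^1(\mathcal{O}_{k,S}, F)$, and then invoking the Borel--Serre finiteness of that group. You gesture at this and then defer the full execution to Margulis, which is honest, but it does mean the proposal is ultimately a sketch resting on the same citation the paper itself uses rather than a self-contained alternative proof.
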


The following is known as Margulis' normal subgroup theorem for \(S\)-arithmetic groups. We let
\[\rank_S G = \sum_{p\in S\cup\{\infty\}} \rank_{k_p} G.\]

\begin{theorem}[Margulis, {\cite[Theorem~A, Chapter~VIII]{Mar}}]
\label{thm : nst}
Suppose that \(G\) is a connected almost \(k\)-simple \(k\)-group.
Let \(\Gamma\) be an \(S\)-arithmetic subgroup of \(G\) and let \(N\) be a normal subgroup of \(\Gamma\). Suppose \(\rank_S G \geq 2\) and that either \(G\) is connected or \(S\) is finite. Then either \(N\) 
lies in the centre of \(G\) or the quotient \(\Gamma/N\) is finite (in which case \(N\) has finite index in \(\Gamma\)).
\end{theorem}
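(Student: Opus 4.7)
The plan is to carry out Margulis' two-dichotomy strategy, showing that under the stated rank and simplicity hypotheses the quotient $\Gamma/N$ is simultaneously amenable and Kazhdan, whence finite. Throughout, I work with a normal subgroup $N \trianglelefteq \Gamma$ that does not lie in the center of $G$; the goal is to show $[\Gamma : N] < \infty$. By Proposition~\ref{prop : covering} I may pass to the simply connected cover $\widetilde G$ and its $S$-arithmetic preimage, so that the ambient algebraic group is simply connected and $\Gamma$ embeds diagonally as a lattice in $\prod_{v \in S} G(k_v)$.

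The first main step is to establish that $\Gamma/N$ has Kazhdan's property~$(\mathrm{T})$. Since $\rank_S G \ge 2$, at least one of the local factors $G(k_v)$ either has local rank $\ge 2$ or the product contains a pair of non-compact factors whose combined rank is at least $2$. By the Kazhdan--Wang theorem (property~$(\mathrm{T})$ for higher-rank semisimple groups over local fields, together with property~$(\mathrm{T})$ being inherited by lattices), the $S$-arithmetic group $\Gamma$ inherits property~$(\mathrm{T})$ from $\prod_{v \in S} G(k_v)$. Because property~$(\mathrm{T})$ passes to arbitrary quotients, $\Gamma/N$ has property~$(\mathrm{T})$ as well.

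The second and harder step is amenability of $\Gamma/N$. Following Margulis, I would fix a minimal parabolic $k_v$-subgroup $P_v < G(k_v)$ for each $v \in S$ and let $B = \prod_v G(k_v)/P_v$ be the associated Furstenberg boundary. The diagonal action $\Gamma \curvearrowright B$ is amenable in the sense of Zimmer, and any $\Gamma$-equivariant measurable quotient of $B$ is a $\Gamma$-boundary. Given the normal subgroup $N$, consider the space of $N$-invariant means on $L^\infty(\Gamma/N)$; transporting this through a boundary map for the $\Gamma/N$-action, one argues via Margulis' factor theorem that any measurable $\Gamma$-equivariant quotient of $B$ corresponds to a product of parabolics containing the image of $N$. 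The hypothesis that $N$ is not central, combined with almost-$k$-simplicity of $G$ and the Zariski density of projections (strong approximation in the $S$-arithmetic setting), forces the boundary quotient to be trivial, which translates into amenability of the $\Gamma/N$-action on a point, i.e.\ amenability of $\Gamma/N$.

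Finally, a countable discrete group that is both amenable and has property~$(\mathrm{T})$ must be finite: any amenable group admits invariant means (hence the regular representation almost has invariant vectors), and property~$(\mathrm{T})$ then forces an actual invariant unit vector in $\ell^2(\Gamma/N)$, which is only possible when $\Gamma/N$ is finite. Combining the two previous steps yields the dichotomy of the theorem. I expect the amenability step — specifically the application of the factor theorem to rule out nontrivial measurable quotients of the Furstenberg boundary when $N$ is noncentral — to be by far the main obstacle; it is exactly here that one needs the rank hypothesis $\rank_S G \ge 2$ and the almost-$k$-simplicity of $G$, and where Margulis' deep boundary/superrigidity machinery enters.
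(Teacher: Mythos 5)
The paper does not prove this theorem; it is quoted verbatim as Margulis' normal subgroup theorem for \(S\)-arithmetic groups, with a citation to Margulis' book. So there is no in-paper argument to compare against, and your sketch must be judged on its own terms as a reconstruction of Margulis' proof.

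Your outline correctly identifies Margulis' two-pronged dichotomy (amenability of \(\Gamma/N\) via the Furstenberg boundary and the factor theorem, plus a Kazhdan-type argument, and then the elementary fact that a countable group which is both amenable and Kazhdan is finite). However, the property~\((\mathrm{T})\) step as you have written it is wrong, and it is not a cosmetic issue. You assert that because \(\rank_S G \geq 2\), the product \(\prod_{v\in S} G(k_v)\) has property~\((\mathrm{T})\) and hence \(\Gamma\) does too. But \(\rank_S G\) is a \emph{sum} of local ranks, and the hypothesis \(\rank_S G \geq 2\) is satisfied when every local factor has rank \(1\) (for instance \(G=\SL_2\) with \(S = \{p\}\), giving \(\Gamma = \SL_2(\Z[1/p]) < \SL_2(\R)\times \SL_2(\Q_p)\), or a Hilbert modular group \(\SL_2(\mathcal{O}_K)\) for \(K\) real quadratic). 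None of these groups, nor their ambient products, have property~\((\mathrm{T})\); rank-\(1\) simple groups over local fields are not Kazhdan, a product is Kazhdan only if every noncompact factor is, and lattices are Kazhdan iff the ambient group is. Yet the normal subgroup theorem does apply to them.

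The fix — and the real content of Margulis' Kazhdan step — is that one establishes property~\((\mathrm{T})\) \emph{for the quotient} \(\Gamma/N\) directly, without first proving it for \(\Gamma\). This uses the noncentrality of \(N\), Zariski density of the projections of \(\Gamma\) (and hence of \(N\)) into each local factor, and a representation-theoretic argument tailored to the lattice-in-a-product situation; it is not the trivial observation that quotients of Kazhdan groups are Kazhdan, since \(\Gamma\) itself may fail to be Kazhdan. As written, your proposal would prove the theorem only under the stronger hypothesis that \(\rank_{k_v} G \geq 2\) for some \(v\) (or more generally that every noncompact local factor has rank \(\geq 2\)), which incidentally is the situation in the applications elsewhere in this paper, but falls short of the general statement you are trying to prove.
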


We state a particular case of the famous Margulis' superrigidity theorem for \(S\)-arithmetic groups (\cite[Theorem~5.14, Chapter VII]{Mar}). 

\begin{theorem}[Margulis]
\label{Margulis : superrigidity}
Suppose that for each \(p \in S\),  \(G_p\) is a simply connected,  semisimple \(\mathbb{Q}_p\)-group such that \(G_p(\mathbb{Q}_p)\) has no anisotropic factors.  Suppose also that \(\Gamma\) is an irreducible lattice in \(G = \prod_{p\in S\cup\{\infty\}}G_p(\mathbb{Q}_p)\) and \(\rank_S G \geq 2\).  Let \(k\) be  \(\mathbb{Q}_q\) for some prime number \(q\) and \(H\) be a connected almost \(k\)-simple \(k\)-group.

Suppose that \(\rho\colon \Gamma \to H(k)\) is a homomorphism with \(\rho(\Gamma)\) Zariski dense in \(H\).

\begin{enumerate-(a)}
\item
\label{superrigidity : (a)}
If \(q\notin S\), then \(\rho(\Gamma)\) is relatively compact in \(H(k)\).
\item
\label{superrigidity : (b)}
If \(q\in S\) and \(\rho(\Gamma)\) is not relatively compact in \(H(k)\), then there is a uniquely determined \(k\)-epimorphism \(\hat \rho \colon  G\to H(k)\) and a uniquely determined homomorphism \(\nu\colon \Gamma \to Z(H)\) such that
\(\rho(g) = \nu(g)\cdot \hat\rho(g)\), for all \(g\in \Gamma\).
\end{enumerate-(a)}
\end{theorem}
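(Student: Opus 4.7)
The plan is to follow Margulis' boundary-theoretic strategy; the main content is part (b), and (a) will follow from the same machinery by contradiction. Fix minimal parabolic subgroups \(P < G\) and \(Q < H(k)\). Since the action \(G\curvearrowright G/P\) is amenable, Furstenberg's lemma produces a measurable \(\Gamma\)-equivariant map \(\phi\colon G/P \to M^1(H(k)/Q)\) into the space of probability measures on \(H(k)/Q\). The hypothesis that \(\rho(\Gamma)\) is not relatively compact in \(H(k)\) is used to rule out the degenerate case where \(\phi\) takes values on measures with large (parabolic) stabilizers; a careful analysis of \(H(k)\)-stabilizers of probability measures on \(H(k)/Q\) then forces \(\phi\) to factor almost everywhere through the Dirac embedding \(H(k)/Q \hookrightarrow M^1(H(k)/Q)\), yielding a \(\Gamma\)-equivariant measurable map \(\phi\colon G/P \to H(k)/Q\).

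The hardest step, and the technical heart of the proof, is to promote \(\phi\) from a merely measurable map to a \(k\)-rational one. Using Moore's ergodicity theorem applied to the action of suitable horospherical/unipotent subgroups on \(G/P\) (this is where the hypotheses \(\rank_S G \geq 2\), simple connectedness, and absence of anisotropic factors are essential), one shows that \(\phi\) is invariant along enough unipotent directions. Combined with \(\Gamma\)-equivariance and the Borel density theorem (which says the Zariski closure of \(\Gamma\) in \(G\) is the full group), this forces \(\phi\) to agree almost everywhere with a \(k\)-rational \(G\)-equivariant morphism \(G/P \to H(k)/Q\). This rationalization step is where Zariski density, ergodicity, and the structure theory of parabolic subgroups over local fields all interact, and it is the main obstacle.

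Once the boundary map is rational and \(G\)-equivariant, the structure theory of semisimple groups lifts it to a \(k\)-morphism \(\hat\rho\colon G \to H(k)\). The almost \(k\)-simplicity of \(H\) together with Zariski density of \(\rho(\Gamma)\) forces \(\hat\rho\) to be a \(k\)-epimorphism, while the discrepancy \(\nu(g) \coloneqq \rho(g)\hat\rho(g)^{-1}\) must commute Zariski-densely with \(H\) and hence land in \(Z(H)\), giving the decomposition \(\rho = \nu\cdot\hat\rho\) of (b). Uniqueness of \(\hat\rho\) and \(\nu\) follows because any two candidates differ by a \(k\)-morphism \(G\to H\) vanishing on the Zariski-dense set \(\rho(\Gamma)\).

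For (a), suppose for contradiction that \(q \notin S\) yet \(\rho(\Gamma)\) is not relatively compact in \(H(k)\). The procedure of the previous paragraphs still applies (none of its inputs required \(q \in S\)) and produces a continuous \(k\)-epimorphism \(\hat\rho\colon G = \prod_{p\in S\cup\{\infty\}} G_p(\mathbb{Q}_p) \to H(\mathbb{Q}_q)\). But when \(p \neq q\) there is no nontrivial continuous field embedding \(\mathbb{Q}_p \hookrightarrow \mathbb{Q}_q\), and hence no nontrivial continuous homomorphism of the semisimple group \(G_p(\mathbb{Q}_p)\) into the \(\mathbb{Q}_q\)-analytic group \(H(\mathbb{Q}_q)\); each factor of \(G\) must therefore map trivially under \(\hat\rho\), contradicting the surjectivity onto the nontrivial almost \(k\)-simple group \(H\). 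Thus \(\rho(\Gamma)\) is relatively compact, proving (a).
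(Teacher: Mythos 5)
The paper does not prove this theorem. It is stated as a special case of Margulis' superrigidity theorem for $S$-arithmetic groups and cited directly to \cite[Theorem~5.14, Chapter VII]{Mar}; the paper uses it as a black box, so there is no in-paper proof against which to compare your sketch.

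Judged on its own, your outline correctly describes the architecture of Margulis' argument: amenability of $G\curvearrowright G/P$ yields a Furstenberg boundary map into the probability measures on $H(k)/Q$, unboundedness of $\rho(\Gamma)$ forces it through the Dirac embedding, and the resulting equivariant map is then promoted to a $k$-rational morphism and extended to $\hat\rho$ with the central correction $\nu$. But the proposal is a road map rather than a proof. The rationalization paragraph names the desired conclusion without establishing it; that step is the bulk of Chapter~VII of Margulis' monograph, and the phrase ``Moore ergodicity, Borel density, and parabolic structure theory interact'' is a placeholder for the delicate argument involving contracting automorphisms and measurable maps into algebraic varieties, not a substitute for it. Separately, in part~(a), the step from ``no continuous field embedding $\mathbb{Q}_p\hookrightarrow\mathbb{Q}_q$'' to ``no nontrivial continuous homomorphism $G_p(\mathbb{Q}_p)\to H(\mathbb{Q}_q)$'' is not a formal consequence: it needs the Borel--Tits theorem on abstract homomorphisms of isotropic semisimple groups, applied after first arranging Zariski density of the image, and this should be invoked explicitly. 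Since the paper defers the proof entirely to Margulis, omitting those details is defensible here, but your proposal should be read as a summary of the known proof rather than an independent one.
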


\begin{remark}
In the sequel we apply Theorem~\ref{Margulis : superrigidity} for \(G_p = \widetilde {\SO_n} = \Spin_n\).
Recall that, for \(n=3\) or \(n\geq 5\), 
 the algebraic group \(\SO_n\) is almost \(k\)-simple for \(k=\mathbb{Q}_p\), and any prime \(p\).
(E.g., see Margulis~\cite[Chapter~IX, Remark~1.7(vi)]{Mar}.)
It follows that \(\Spin_n\) is almost \(k\)-simple as well.
(If \(G\) is connected and \(\mathbb{Q}_p\)-simple and \(N\subseteq \widetilde G\) is a normal \(\mathbb{Q}_p\)-subgroup then \(\pi(N)\) is a normal \(\mathbb{Q}_p\)-subgroup of \(G\) which is eihter finite or the whole \(G\). Since \(\pi\) has finite kernel,  in the first case \(N\) is finite, while in the second case \(N\) has finite index in \(\widetilde G\).  However, since \(\widetilde G\) is connected, \(\widetilde G=\widetilde G^0 \) which is known to be the minimal closed subgroup of finite index in \(\widetilde G\).)
So,  the hypothesis that \(G_p(\mathbb{Q}_p)\) has no anisotropic factors is always satisfied in our situation.
\end{remark}

Also, in the proof of Theorem~\ref{thm : irreducibility free} we shall use the following well-known result.  (E.g., see~\cite[Corollary~1.3.5]{BekdeLVal}.)

\begin{proposition}
\label{prop : (T) to amenable}
Let \(G\) be a topological group with Kazhdan Property \((\mathrm{T})\),  and let \(H\) be a locally compact amenable group.  If \( \rho \colon G \to H\) is a continuous homomorphism then \(\rho(G)\) is relatively compact.
\end{proposition}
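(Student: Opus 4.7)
The plan is to argue via the closure $L = \overline{\rho(G)}$ inside $H$, and show that $L$ must be compact by combining amenability (inherited from $H$) with Kazhdan's property $(\mathrm{T})$ (inherited from $G$).

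First I would observe that $L$, being a closed subgroup of the locally compact amenable group $H$, is itself locally compact and amenable; amenability is well-known to pass to closed subgroups of locally compact groups. Next I would show that $L$ has property $(\mathrm{T})$. Here one uses the standard fact that property $(\mathrm{T})$ is preserved under continuous homomorphisms with dense image: if $\pi\colon L \to U(\mathcal{H})$ is a unitary representation that almost admits invariant vectors, then $\pi\circ\rho\colon G \to U(\mathcal{H})$ also almost admits invariant vectors (by continuity, a Kazhdan pair for $G$ is mapped into a set whose invariance behavior controls $\pi$ on the dense subgroup $\rho(G)$, hence on $L$ by continuity of $\pi$). Since $G$ has $(\mathrm{T})$, $\pi\circ\rho$ has a non-zero invariant vector, and this vector is then invariant under $\rho(G)$, and therefore invariant under $L$.

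Now $L$ is a locally compact group that is simultaneously amenable and Kazhdan. The final step is the classical conclusion that such a group must be compact. The argument: amenability of $L$ is equivalent to the trivial representation $1_L$ being weakly contained in the regular representation $\lambda_L$ on $L^2(L,m_L)$. Property $(\mathrm{T})$ says that $1_L$ is isolated in the Fell topology on the unitary dual, so weak containment of $1_L$ in $\lambda_L$ upgrades to the statement that $1_L$ is an actual subrepresentation of $\lambda_L$. Thus $L^2(L,m_L)$ contains a non-zero left-invariant vector, which forces $m_L(L)<\infty$, i.e., $L$ is compact.

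Therefore $\rho(G)\subseteq L$ with $L$ compact, so $\rho(G)$ is relatively compact in $H$. The main obstacle in writing a fully rigorous proof is really the last step: one has to invoke both the Hulanicki--Reiter characterization of amenability (weak containment of the trivial representation in the regular representation) and the definition of $(\mathrm{T})$ in terms of isolation of $1_L$ in the Fell topology. Everything else—passage of amenability to closed subgroups and passage of $(\mathrm{T})$ across dense-image homomorphisms—is routine and can be cited from Bekka--de la Harpe--Valette.
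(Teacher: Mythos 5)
Your proof is correct and is the standard argument for this fact. The paper does not supply its own proof but cites it as Corollary~1.3.5 of Bekka--de la Harpe--Valette, where exactly the chain you describe is used: amenability passes to the closed subgroup $L=\overline{\rho(G)}$, property $(\mathrm{T})$ passes to $L$ through the dense-image homomorphism $\rho$, and a locally compact group that is simultaneously amenable and Kazhdan must be compact because $1_L \prec \lambda_L$ upgrades to $1_L \leq \lambda_L$, forcing the Haar measure to be finite.
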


\subsection{Proof of Theorem~\ref{thm : irreducibility free}}
Recall that we identify \(\mathbb{S}^{n-1}\) with the coset space \(\SO_n(\R)/\SO_{n-1}(\R)\). If \(x\in \SO_n(\R)\), we denote by \([x]\) the coset \(x\SO_{n-1}(\R)\) for readability.
Let \(n\geq 5\) and \(m<n\).
Suppose that \(f\colon \mathbb{S}^{n-1} \to \mathbb{S}^{m-1}\) is a Borel map such that \([x]\mathbin{\mathcal{R}_n}[y] \iff f([x])\mathbin{\mathcal{R}^*_m}f([y])\) for all \([x],[y]\in \mathbb{S}^{n-1}\).

Put \(\Gamma = \SO_{n}\big(\Z\big[\frac{1}{5}\big]\big)\),  \(\mu=\mu_n\), and \(\Delta = \SO_m(\mathbb{Q})\).
 Clearly, \(f\) is also a countable-to-one homomorphism from \(\mathcal{R}(\Gamma\curvearrowright \mathbb{S}^{n-1})\) into \(\mathcal{R}^*_m\).
 Since \(\mathcal{R}^*_m\) is free, let \(\omega \colon \Gamma \times \mathbb{S}^{n-1} \to \Delta\) be the Borel cocycle associated to \(f\).
 
 Let \(G \coloneqq \SO_n(\R)\) and denote by \(m_G\) the Haar measure on \(G\).
 Since
 the action of \(\Gamma\) on \(S^{n-1}=\SO_n(\mathbb{R})/\SO_{n-1}(\mathbb{R})\)  is a quotient of the left-translation action of \(\Gamma\) on \(\SO_n(\mathbb{R})\), the cocycle
 \(\omega\) induces a Borel cocycle \(\alpha\colon\Gamma \times G \to \SO_{m}(\Q)\)  for the left-translation action \(\Gamma\curvearrowright (G,m_{G})\), which is defined by setting \(\alpha(g,x)=\omega(g, [x])\).

As usual let \(\widetilde G\) be the universal covering group of \(G\), and let \(p \colon \widetilde G\to  G\) the \(2\)-\(1\) covering map. Also let \(\widetilde \Gamma = p^{-1}(\Gamma)\) be the inverse image of \(\Gamma\). We can lift \(\alpha\) to a cocycle \(\widetilde \alpha \colon \widetilde \Gamma \times \widetilde G \to \Delta\) by setting \(\widetilde \alpha(g,x) = \alpha(p(g), p(x))\). Now it is easily checked that the hypothesis of Theorem~\ref{thm : superrigidity} are satisfied.  In particular, \(\widetilde \Gamma\) has property \(\mathrm{(T)}\) --- note that this property does not hold whenever \(n=4\). Since \(\pi_{1}(\widetilde G)=\{e\}\) is trivial,
 the only group homomorphism \(\pi_{1}(\widetilde G) \to \Delta\) is obviously the trivial one.  Moreover, since the covering map \(p\) is open and surjective,  it follows that \(\widetilde \Gamma < \widetilde G\) is dense.  Therefore, by Theorem~\ref{thm : superrigidity} there are a group homomrphism \(\varphi\colon \widetilde\Gamma \to \Delta\) and a Borel map \(B\colon \widetilde G \to \Delta\) such that
\[
\varphi(g) = B(g\cdot x) \alpha (g, x)  B(x)^{-1}
\]
for all \(g\in \widetilde \Gamma\) and almost every \(x\in \widetilde G\).

Recall that \(\widetilde G = \Spin_n\) whose center \(Z(\widetilde G)\) is finite. (E.g., see~\cite[page~40]{Var04}) Let \(K = \ker \varphi\).  Because of Proposition~\ref{prop : covering}, \(\widetilde{\Gamma}\) is \(S\)-arithmetic in \(\widetilde G\) for \(S=\{\infty, 5\}\). Moreover, isogenous groups have the same \(S\)-rank,  so \(\rank_S \widetilde G \geq 2\).
(This observation follows from~\cite[Corollary~1.4.6~(a), Chapter~1]{Mar} and the fact that \(\rank_{\mathbb{Q}_p}\SO_n = \lfloor \frac{n}{2}\rfloor\) when \(p\equiv 1\) mod \(4\).)
By Margulis' normal subgroup theorem for \(S\)-arithmetic groups (see Theorem~\ref{thm : nst}) it follows that either:
\begin{enumerate-(1)}
\item
\([\widetilde \Gamma : K]<\infty\), or else
\item
\label{case : 2}
\(\varphi\colon \widetilde \Gamma \to \Delta\) is a \emph{virtual embedding}; i.e., \(K\) is finite.
\end{enumerate-(1)}

Now we analyze the two cases separately.
In case \([\widetilde \Gamma : K]<\infty\), then
there is a \(K\)-invariant measurable \(Z\subseteq \widetilde G\) with \(\widetilde m_{G} (Z)>0\) such that \(K\) acts ergodically on \((Z, \mu_{Z})\), where \(\mu_{Z}(A) = \widetilde m_{G}(A)/\widetilde m_{G}(Z)\). Then consider the cocycle
\[
\widetilde \alpha \restriction (K\times Z)\colon K\times Z \to \Delta.
\]

Next, for all \(x\in X\)  we can define the ``adjusted'' Borel homomorphism by \(f'(x) = B(x) \cdot f(x)\) so that \(\varphi\) is the cocycle associated to \(f'\restriction Z\),  and \(f'\restriction Z\) is a weak Borel reduction from \(\mathcal{R}(K\curvearrowright Z)\) to \(\mathcal{R}^*_m\).
It follows that \(f'(k\cdot x)=\varphi(k)\cdot f'(x) = f'(x)\) for all \(k\in K\) and \(x\in Z\). This shows that \(f'\colon Z \to Y\) is a \(K\)-invariant map. Since \(K\) acts ergodically on \((Z, \mu_{Z})\), it follows that \(f'\) is constant on a measure \(1\) set, which is a contradiction.

On the other hand,  the following lemma shows that there is no virtual embedding \(\varphi\colon \widetilde \Gamma \to \SO_m(\mathbb{Q})\). So, we can exclude case~\ref{case : 2}.

\begin{lemma}
\label{lem : Margulis}
If  \(\rho\colon \widetilde \Gamma \to \SO_m(\mathbb{Q})\) is
group homomorphism, then \(\rho(\widetilde \Gamma)\) is finite.
\end{lemma}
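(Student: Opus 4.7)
The strategy is to combine Margulis' normal subgroup theorem with his $S$-arithmetic superrigidity. First apply Theorem~\ref{thm : nst} to $\ker \rho \trianglelefteq \widetilde{\Gamma}$: either $\ker \rho$ is central in $\widetilde G$ (hence finite, since $Z(\Spin_n)$ is finite) or $[\widetilde \Gamma : \ker \rho] < \infty$. In the latter case $\rho(\widetilde \Gamma)$ is immediately finite, so we may assume $\rho$ has finite kernel and seek a contradiction from the assumption that $L := \rho(\widetilde \Gamma)$ is infinite.

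Since $\widetilde \Gamma$ is finitely generated, $L$ is contained in $\SO_m\bigl(\Z\bigl[\tfrac{1}{N}\bigr]\bigr)$ for some integer $N$. By the theorem of Borel quoted in Section~2.2, $\SO_m\bigl(\Z\bigl[\tfrac{1}{N}\bigr]\bigr)$ embeds diagonally as a lattice---in particular as a discrete subgroup---of $\SO_m(\R) \times \prod_{p \mid N} \SO_m(\Q_p)$. It therefore suffices to prove that $L$ has relatively compact image in each factor: discreteness of the lattice then forces $L$ to be finite.

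Relative compactness in $\SO_m(\R)$ is automatic since $\SO_m(\R)$ itself is compact. For each prime $p \mid N$ with $p \neq 5$ we invoke Theorem~\ref{Margulis : superrigidity}\ref{superrigidity : (a)}: after the standard reduction to an almost $\Q_p$-simple Zariski closure (passing to a finite-index subgroup of $\widetilde \Gamma$ to guarantee connectedness, then projecting to a simple factor of the semisimple quotient), the clause $q = p \notin S$ gives that the image in that factor is relatively compact, and hence $L$ is relatively compact in $\SO_m(\Q_p)$.

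The crucial case is $p = 5 \in S$. If $L$ were not relatively compact in $\SO_m(\Q_5)$, then Theorem~\ref{Margulis : superrigidity}\ref{superrigidity : (b)} would produce a continuous $\Q_5$-epimorphism $\hat\rho \colon G = \Spin_n(\R) \times \Spin_n(\Q_5) \to H(\Q_5)$, where $H \subseteq \SO_m$ is the (Zariski-connected, almost $\Q_5$-simple) closure of $L$. But $\Spin_n(\R)$ is connected while $H(\Q_5)$ is totally disconnected in the Hausdorff topology, so $\hat\rho \restriction \Spin_n(\R)$ is trivial; hence $\hat\rho$ descends to a continuous epimorphism $\Spin_n(\Q_5) \to H(\Q_5)$. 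Almost $\Q_5$-simplicity of $\Spin_n$ forces the kernel to be finite central, so $\dim H = \dim \Spin_n = \binom{n}{2}$, whereas $H \subseteq \SO_m$ yields $\dim H \leq \binom{m}{2} < \binom{n}{2}$ since $m < n$, a contradiction. The hard part is exactly this dimension count at $p = 5$, which crucially exploits $m < n$; the remaining difficulty is the routine bookkeeping needed to ensure the Zariski closure is connected and almost simple before applying Theorem~\ref{Margulis : superrigidity}.
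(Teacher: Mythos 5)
Your overall architecture matches the paper's: use finite generation to land $L = \rho(\widetilde\Gamma)$ inside some $\SO_m\bigl(\Z\bigl[\tfrac1N\bigr]\bigr)$, which is discrete in $\SO_m(\R)\times\prod_{p\mid N}\SO_m(\Q_p)$, and then show relative compactness in each nonarchimedean factor so that discreteness forces finiteness. The opening appeal to the normal subgroup theorem is harmless but entirely superfluous — you never use the conclusion that $\ker\rho$ is finite in what follows — and the $p=5$ dimension count is essentially the same argument the paper gives (every $\Q_5$-epimorphism $\Spin_n\to H$ with $H\subseteq\SO_m$ is ruled out because $m<n$).

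The genuine gap is at the primes $p\neq 5$. You write that after ``projecting to a simple factor of the semisimple quotient'' and applying Theorem~\ref{Margulis : superrigidity}\ref{superrigidity : (a)}, the image in that factor is relatively compact ``and hence $L$ is relatively compact in $\SO_m(\Q_p)$.'' That ``hence'' does not follow. Margulis' superrigidity, as you correctly note, applies only after passing to the semisimple quotient $L_q = M_q/R(M_q)$ of the Zariski closure $M_q$ of $\rho_q(\widetilde\Gamma)$. What superrigidity then gives you is that the image of $\widetilde\Gamma$ in $L_q(\Q_p)$ is relatively compact. But the preimage of a compact set under the projection $M_q(\Q_p)\to L_q(\Q_p)$ need not be compact, because the solvable radical $R(M_q)(\Q_p)$ can be noncompact; so relative compactness downstairs does not automatically pull back to relative compactness of $\rho_q(\widetilde\Gamma)$ in $M_q(\Q_p)\subseteq\SO_m(\Q_p)$. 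The paper closes this gap with an extra argument you are missing: the preimage $\sigma^{-1}(\overline{K_q})$ of the (compact) closure of the image is an extension of a compact group by a solvable group, hence amenable, and then Proposition~\ref{prop : (T) to amenable} — using that $\widetilde\Gamma$ has property $(\mathrm{T})$ — forces $\rho_q(\widetilde\Gamma)$ itself to be relatively compact. Without invoking property $(\mathrm{T})$ and amenability at this point, your argument at $p\neq 5$ is incomplete.
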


\begin{proof}
Since \(\widetilde \Gamma\) is finitely generated, there are only finitely many prime numbers \(p_1,\dotsc, p_n\) that appear in the denominators in the entries of the matrices in \(\rho(\widetilde{\Gamma})\).  So,
it suffices to show that the power of each prime appearing in the denominators of the matrix entries in  \(\rho(\gamma)\in \rho(\widetilde\Gamma)\) is uniformly bounded over \(\gamma \in \widetilde\Gamma\).
This will yield some \(N\in \mathbb{N}\) so that
\[
\rho(\widetilde \Gamma) \subseteq \big\{(a_{ij}) \in \SO_m(\Q) \mid a_{ij} \text{ has denominator less than }N\big \}.
\]
It will follow that \(\rho(\widetilde\Gamma)\) is discrete in \(\SO_n(\mathbb{R})\),  therefore \(\rho(\widetilde\Gamma)\) is finite because \(\SO_n(\mathbb{R})\) is compact.

For every \(q\in \{p_1,\dotsc,p_n\}\), let \(\SO_m(\Q)\to \SO_m(\Q_q)\) be the canonical embedding so that we can view \(\rho\) as a map \(\rho_q\colon \widetilde\Gamma\to \SO_m(\Q_q)\). 

If \(q \neq 5\), then let \(\rho_q\colon \widetilde \Gamma \to \SO_m(\mathbb{Q}_q)\).
Clearly, \(\rho_q(\widetilde \Gamma)\) need not be Zariski closed in \(\SO_m(\Q_q)\) so we cannot apply Theorem~\ref{Margulis : superrigidity} directly. 
However, the Zariski closure of \(\rho_q(\widetilde \Gamma)\) is an algebraic group over \(\mathbb{Q}_p\),  so let
\(M_q\) be the Zariski closure of \(\rho_q(\widetilde \Gamma)\).
Here notice that \(M_q\) need not be semisimple,  so we consider the semisimple group defined as \(L_p = M_q/ R(M_q)\) where \(R(M_q)\) is the \emph{(solvable) radical} of \(M_q\),  and is defined to be the maximal normal connected solvable subgroup of \(M_q\).
Clearly \(L_q\) is a semisimple  \(\mathbb{Q}_q\)-group and let  \(\sigma\colon M_q \to L_q\) be the quotient map.

Now the map \(\sigma\circ\rho_q\) is a group homomorphism with Zariski dense image.  Therefore, if we let \(K_q = \sigma\circ \rho_q( \widetilde \Gamma)\) we have that \(K_q\) is relatively compact  in \(L_q\) by Theorem~\ref{Margulis : superrigidity}.
It follows that \(\sigma^{-1}(\overline{K_q})\) is an extension of a compact group by a solvable group, so it is amenable.  Therefore, since \(\widetilde \Gamma\) has property \((\mathrm{T})\), we have that \(\rho_q(\widetilde \Gamma)\) is relatively compact because of Proposition~\ref{prop : (T) to amenable}.

Otherwise, if \(q = 5\), the statement follows by Theorem~\ref{Margulis : superrigidity}--\ref{superrigidity : (b)}, the fact that \(Z(\SO_m)\) is finite,  and the fact that every \(\mathbb{Q}_5\)-homomorphism
\(\widetilde G\to \SO_m\) is trivial because \(m<n\) and \(\Spin_n\) is almost \(\mathbb{Q}_5\)-simple. 
\end{proof}

This concludes the proof of Theorem~\ref{thm : irreducibility free}.

\section{The non-free part of \(\SO_n(\mathbb{Q})\curvearrowright \mathbb{S}^{n-1}\)}

In this section we show that whenever \(3\leq m<n\) the range of every Borel reduction from \(\mathcal{R}_n\) to \(\mathcal{R}_m\) is almost contained in the free part.  In fact,  we prove the following stronger statement.

\begin{proposition}
\label{lem : nonfree}
Suppose that \(3\leq m < n\) and that \(f\colon \mathbb{S}^{n-1}\to \mathbb{S}^{m-1}\) is a weak Borel reduction from \(\mathcal{R}_n\) to \(\mathcal{R}_m\). Then, there is a Borel \(\SO_n(\mathbb{Q})\)-invariant \(Y\subseteq \mathbb{S}^{n-1}\) with \(\mu(Y)=1\) such that \(f(Y)\) is contained in the free part. 
\end{proposition}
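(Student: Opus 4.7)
The plan is to show $\mu_n(f^{-1}(Z)) = 0$ for $Z \coloneqq \mathbb{S}^{m-1}\setminus \Fr \mathbb{S}^{m-1}$: then $Y \coloneqq \mathbb{S}^{n-1}\setminus \SO_n(\Q)\cdot f^{-1}(Z)$ is Borel, $\SO_n(\Q)$-invariant, of full $\mu_n$-measure by ergodicity of $\SO_n(\Q)\curvearrowright (\mathbb{S}^{n-1},\mu_n)$ (Lemma~\ref{lem : K/L}), and satisfies $f(Y)\subseteq \Fr \mathbb{S}^{m-1}$ by construction. I therefore assume for contradiction that $\mu_n(f^{-1}(Z))>0$, and aim to produce a weak Borel reduction from $\mathcal{R}_n$ into an equivalence relation on a sphere of dimension $k\leq m-2<n$ that is, in one guise or another, ruled out by Theorem~\ref{thm : irreducibility free}.

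The first step is to stratify $Z$. A point $y\in \mathbb{S}^{m-1}$ lies in $Z$ exactly when it belongs to the $1$-eigenspace of some non-identity $g\in\SO_m(\Q)$, and such eigenspaces are precisely the proper rational linear subspaces of $\R^m$ of codimension at least $2$. For $y\in Z$ let $V(y)$ denote the intersection of all rational linear subspaces of $\R^m$ containing $y$ (itself rational), and put $Z_k \coloneqq \{y\in Z : \dim V(y)=k\}$. Since $V(gy)=gV(y)$, every $Z_k$ is Borel and $\SO_m(\Q)$-invariant, so $f^{-1}(Z_k)$ is $\SO_n(\Q)$-invariant and ergodicity selects a single $k\leq m-2$ with $\mu_n(f^{-1}(Z_k))=1$. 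When $m=3$ only $k=1$ occurs, and $Z_1$ is the countable set of rational unit directions in $\mathbb{S}^2$; since $f$ is countable-to-one and $\mu_n$ is non-atomic, $f$ cannot take values in a countable set on a conull subset, contradiction.

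For $m\geq 4$, I apply ergodicity a second time to the $\SO_n(\Q)$-invariant countable-valued map $x\mapsto [V(f(x))]$ (the $\SO_m(\Q)$-orbit of $V(f(x))$ in the countable set of rational $k$-planes) to pin down a single orbit $[V_0]$. By Borel uniformisation pick $g_x\in \SO_m(\Q)$ measurably with $g_xV(f(x))=V_0$ and set $\tilde f(x) \coloneqq g_x\cdot f(x)\in V_0\cap\mathbb{S}^{m-1}$. Let $H = \{g\in\SO_m(\Q): gV_0=V_0\}$ be the setwise stabiliser of $V_0$ and $N=\{g\in H: g|_{V_0}=\id\}\trianglelefteq H$ its pointwise stabiliser. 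A minimality argument shows that for $y\in V_0\cap Z_k$ the $\SO_m(\Q)$-stabiliser of $y$ is contained in $H$ and coincides with $N$: if some $g\in H\setminus N$ fixed $y$, its $1$-eigenspace in $V_0$ would be a proper rational subspace of $V_0$ containing $y$, contradicting $V(y)=V_0$. Hence $H/N$ acts freely on the image of $\tilde f$, and $\tilde f$ becomes a weak Borel reduction from $\mathcal{R}_n$ to the free part of $\mathcal{R}(H/N\curvearrowright V_0\cap\mathbb{S}^{m-1})$. After a rational linear identification $V_0\cong\Q^k$ this last relation is Borel reducible to an $\mathcal{R}_k^*$-type orbit equivalence relation, and one concludes via Theorem~\ref{thm : irreducibility free} when $k\geq 3$, or via the fact that for $k=2$ the group $H/N$ is virtually abelian (so the target is hyperfinite by Theorem~\ref{thm : GaoJac}) while $\mathcal{R}_n$ is not hyperfinite (by Proposition~\ref{prop : amenable} applied to a free subgroup of $\SO_n(\Q)$, cf.\ Proposition~\ref{prop : nonhyperfinite}).

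The principal technical obstacle is the last identification with $\mathcal{R}_k^*$: the positive definite form that $V_0$ inherits from $\R^m$ is generally not rationally equivalent to the standard form on $\Q^k$, so $H/N$ is not conjugate inside $\SO_k(\R)$ to $\SO_k(\Q)$. The cleanest way around this I see is to observe that the proof of Theorem~\ref{thm : irreducibility free} uses only density of a suitable $S$-arithmetic subgroup of the real rotation group of $(V_0,q)$, Kazhdan property~$(\mathrm{T})$ for its universal cover, and Margulis' superrigidity for $S$-arithmetic groups: all three inputs hold uniformly for anisotropic $\Q$-forms of $\SO_k$ in the relevant range, so that the argument applies with $H/N$ playing the role of $\SO_k(\Q)$.
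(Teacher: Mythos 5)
Your setup is parallel to the paper's: both proofs stratify the non\hyp{}free part by the dimension of a minimal ``rational'' subspace containing $f(x)$, use ergodicity to fix a single dimension $k$ and (after saturation/uniformization) a single subspace $V_0$, handle $k=1$ (equivalently $m=3$) by a countability argument, and then observe that the relevant stabilizer quotient acts freely. Your choice of rational subspaces rather than the paper's $\overline{\Q}$\hyp{}based ones is harmless here, and your pointwise/setwise stabilizer quotient $H/N$ plays the same role as the paper's projectivized stabilizer inside $\PSO_m(\Q)$.

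The real divergence is the endgame, and this is where your argument has genuine gaps. You try to identify the target with a sphere relation in dimension $k$ and then invoke (the proof of) Theorem~\ref{thm : irreducibility free}. You correctly flag that the inherited form on $V_0$ need not be the standard rational form, but your proposed fix misidentifies what actually needs re\hyp{}verification: density of $\SO_n(\Z[\tfrac1p])$ and property $(\mathrm{T})$ for its cover are properties of the \emph{source} group and hold unchanged; what genuinely changes is the \emph{target}, and one would need to redo the Margulis\hyp{}superrigidity case analysis of Lemma~\ref{lem : Margulis} with $H/N$ (a subgroup of the orthogonal group of a possibly non\hyp{}standard positive\hyp{}definite rational form, arising as a setwise\hyp{}stabilizer quotient rather than a full rational point group) in place of $\SO_m(\Q)$. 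That can be done, but it is not an immediate corollary of the stated theorem, and your sketch does not do it. Separately, your $k=2$ argument is not valid as written: a \emph{weak} Borel reduction from a non\hyp{}hyperfinite relation to a hyperfinite one is not automatically contradictory; what one needs is $F$\hyp{}ergodicity of the source with respect to hyperfinite (or treeable) $F$, which is available via Theorem~\ref{thm : AdaSpa} but is a different statement than the one you cite.

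The paper sidesteps both issues with one tool: instead of transporting the problem to a lower\hyp{}dimensional sphere relation, it forms the cocycle $\alpha\colon\Gamma\times X_1\to H$ directly and applies the cocycle reduction theorem (Theorem~\ref{thm : Thomas}), which applies to \emph{any} subgroup $H\le G(\Q)$ of an algebraic $\Q$\hyp{}group $G$ with $\dim G<\tfrac{n(n-1)}{2}$, concluding that $\alpha$ is cohomologous to a cocycle with finite image, and then finishes with Lemma~\ref{lem : fin_subg}. This dimension\hyp{}only hypothesis makes the structure of the form on $V_0$, the exact shape of $H/N$, and the low\hyp{}dimensional cases $k\le 2$ irrelevant, and is precisely the ingredient your write\hyp{}up is missing.
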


To prove Proposition~\ref{lem : nonfree} we use ideas from Coskey~\cite{Cos10} and the unpublished work of Thomas~\cite{Tho02unp}. Moreover, we will make use of the following cocycle reduction theorem, whose proof is discussed in Section~\ref{Section : added proof}.

\begin{theorem}
\label{thm : Thomas}
Let \(n \geq 5\) and let \(p\) be a prime number such that \(p\equiv 1 \mod 4\). 
Let \(X\) be a standard Borel \(\SO_n\big(\Z\big[\frac{1}{p}\big]\big)\)-space with
an invariant ergodic probability measure. 
Suppose that \(G\) is an algebraic \(\mathbb{Q}\)-group
such that \(\dim G < \frac{n(n-1)}{2}\) and that \(H \leq G(\mathbb{Q})\). Then for every Borel cocycle
\(\alpha \colon \SO_n\big(\Z\big[\frac{1}{p}\big]\big) \times X \to H\), there exists a cohomologous cocycle \(\beta\) such that \(\beta \big( \SO_n\big(\Z\big[\frac{1}{p}\big]\big)\times X\big)\) is contained in a finite subgroup of \(H\).
\end{theorem}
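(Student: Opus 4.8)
The plan is to deduce Theorem~\ref{thm : Thomas} from the two superrigidity inputs already developed, namely Margulis' normal subgroup theorem (Theorem~\ref{thm : nst}) together with the $S$-arithmetic superrigidity theorem (Theorem~\ref{Margulis : superrigidity}), exactly as in the argument used to prove Lemma~\ref{lem : Margulis} and the final lines of the proof of Theorem~\ref{thm : irreducibility free}. First I would set $\Gamma = \SO_n(\Z[\frac1p])$ and $\Delta = H \leq G(\Q)$, and invoke a cocycle superrigidity result for the $S$-arithmetic group $\Gamma$ acting on $(X,\mu)$: since $\rank_S \SO_n \geq 2$ for $n\geq 5$ and $\Gamma$ has property $(\mathrm{T})$, the cocycle $\alpha$ is cohomologous to one essentially coming from a homomorphism on a finite-index subgroup, or more precisely one reduces $\alpha$ to take values in (a lift of) a homomorphism $\widetilde\Gamma \to \Delta$ by Theorem~\ref{thm : superrigidity} applied to $\widetilde G = \Spin_n$. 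This is the step that passes from an arbitrary Borel cocycle to an algebraic object; it is the analogue of the passage $\omega \rightsquigarrow \varphi$ in Section~4.

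Next I would analyze the resulting homomorphism $\varphi\colon \widetilde\Gamma \to H \leq G(\Q)$ by dichotomizing via Margulis' normal subgroup theorem (Theorem~\ref{thm : nst}) on $K = \ker\varphi$: either $[\widetilde\Gamma : K] < \infty$, in which case $\varphi(\widetilde\Gamma)$ is finite and we are immediately done, or $K$ is finite, i.e. $\varphi$ is a virtual embedding. In the latter case the goal is to show this cannot happen for dimension reasons, which is where the hypothesis $\dim G < \frac{n(n-1)}{2} = \dim \SO_n$ enters. The argument should mirror Lemma~\ref{lem : Margulis}: for each prime $q$ occurring in the denominators of entries of $\varphi(\widetilde\Gamma)$, compose with the embedding $G(\Q) \hookrightarrow G(\Q_q)$, pass to the Zariski closure $M_q$, kill its solvable radical to obtain a semisimple $\Q_q$-group $L_q$, and apply Theorem~\ref{Margulis : superrigidity}. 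When $q = p$ one uses part~\ref{superrigidity : (b)} together with the fact that any nontrivial $\Q_p$-homomorphism $\Spin_n \to L_q$ would force $\dim L_q \geq \dim \SO_n > \dim G$, which is impossible; when $q \neq p$ one uses part~\ref{superrigidity : (a)} to get relative compactness, hence amenability of the relevant extension, hence (by property $(\mathrm{T})$ and Proposition~\ref{prop : (T) to amenable}) relative compactness of $\varphi_q(\widetilde\Gamma)$. Since $\SO_m(\R)$ — or here $G(\R)$ restricted appropriately — need not be compact, one has to be slightly more careful than in Lemma~\ref{lem : Margulis}: I would instead argue that the image is relatively compact in $G(\Q_q)$ for every $q$ including $q=\infty$, which bounds the denominators uniformly and makes $\varphi(\widetilde\Gamma)$ discrete in a product of local groups, hence a lattice, hence — being also contained in a compact set — finite.

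Finally, having shown $\varphi(\widetilde\Gamma)$ is finite in all cases, I would transfer this back down: the cocycle $\widetilde\alpha$ is cohomologous to a cocycle taking values in the finite group $\varphi(\widetilde\Gamma)$, and descending from $\widetilde\Gamma \curvearrowright \widetilde X$ to $\Gamma \curvearrowright X$ (using that $\ker p$ is finite, so this only enlarges the target by a finite factor) yields a cocycle $\beta$ cohomologous to $\alpha$ with $\beta(\Gamma \times X)$ contained in a finite subgroup of $H$, as required.

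I expect the main obstacle to be the dimension bookkeeping in the virtual-embedding case: one must rule out a nontrivial $\Q_p$-homomorphism from $\Spin_n$ into the semisimple quotient $L_p$ of the Zariski closure of the image, purely from $\dim G < \frac{n(n-1)}{2}$. The point is that such a homomorphism, being an isogeny onto its image by almost-simplicity of $\Spin_n$, would produce inside $G$ a $\Q_p$-subgroup of dimension $\dim\SO_n$, contradicting the hypothesis — but one has to be careful that $L_p$ is a quotient of a Zariski closure sitting inside $G(\Q_p)$, so $\dim L_p \leq \dim M_p \leq \dim G < \dim \SO_n$, and that almost-simplicity of $\Spin_n$ forces any nontrivial algebraic homomorphism out of it to have image of full dimension. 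A secondary subtlety, as noted above, is that unlike in Lemma~\ref{lem : Margulis} we cannot simply quote compactness of $G(\R)$; instead one runs the relative-compactness argument at the archimedean place as well, which is legitimate since property $(\mathrm{T})$ and Proposition~\ref{prop : (T) to amenable} apply uniformly across all completions.
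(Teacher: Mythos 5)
There is a genuine gap at the very first step, and it is fatal to the whole plan. You invoke Theorem~\ref{thm : superrigidity} to conclude that the cocycle $\alpha\colon\Gamma\times X\to H$ is cohomologous to a (lifted) homomorphism $\widetilde\Gamma\to\Delta$. But Theorem~\ref{thm : superrigidity} is a statement \emph{only} about the left-translation action $\Gamma\curvearrowright(G,m_G)$ of a dense Kazhdan subgroup on a connected compact group $G$ (this is the Furman--Ioana ``local spectral gap'' result); nothing of the kind holds for an \emph{arbitrary} ergodic $\Gamma$-space $X$, which is what Theorem~\ref{thm : Thomas} hypothesizes. Indeed, ``cohomologous to a group homomorphism with countable target'' is a very strong property that is simply false for a general ergodic p.m.p. action — it is precisely why the conclusion of Theorem~\ref{thm : Thomas} is the weaker ``cohomologous to a cocycle with values in a finite subgroup,'' not ``cohomologous to a homomorphism.'' The analogy you draw with the passage $\omega\rightsquigarrow\varphi$ in Section~4 does not carry over: there $X=\SO_n(\R)/\SO_{n-1}(\R)$, so the cocycle lifts to a cocycle for the genuine translation action on the compact group $\Spin_n(\R)$, and Theorem~\ref{thm : superrigidity} applies; for a general $X$ there is no compact group to lift to, and the Borel space $\widetilde X$ you refer to in your last paragraph does not exist.

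The paper's actual route (Section~\ref{Section : added proof}) is different in kind, not merely in detail: one must use Zimmer's cocycle superrigidity theorem for $S$-arithmetic groups (\cite[Theorem~10.1.6]{Zim}), which does apply to arbitrary ergodic actions but produces a cocycle cohomologous to one controlled by a $k$-rational map into a simple algebraic group over a single local field $k$, not a homomorphism into a countable group. To get from there to a finite-valued cocycle one needs the full Adams--Kechris machinery of inducing cocycles from the lattice $\Gamma_S$ to the ambient group $G_S$, passing to finite extensions of $X$ and running a minimality argument so that the Zariski closure assumptions are met, killing solvable radicals and decomposing into $k$-simple factors, applying Zimmer superrigidity place-by-place, and then assembling via amenability, property $(\mathrm{T})$, and the cocycle reduction lemma \cite[Theorem~9.1.1]{Zim}. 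This is what Theorem~\ref{lem : Thomas++} and the sketch of Theorem~\ref{thm : Thomas++} carry out. Your dimension bookkeeping in the virtual-embedding case and the worry about non-compactness of $G(\R)$ are reasonable heuristics that resurface in that machinery, but they cannot be reached without first replacing your step one by the Zimmer/Adams--Kechris/Thomas route.
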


Before discussing the proof of Proposition~\ref{lem : nonfree} notice that if \(x \in \mathbb{S}^{m-1}\setminus \Fr \mathbb{S}^{m-1}\),  then there exists a nontrivial element \(M \in \SO_m(\mathbb{Q})\) such that \(M\cdot x = x\).  Namely,  \(x\) belongs to an eigenspace of \(M\). Since \(M\) has rational entries, we can easily find a basis for that eigenspace, whose vectors have rational coordinates.  This motivates the following definition. Denote by \(\overline{\mathbb{Q}}\) the algebraic closure of the fields of rational numbers.

\begin{definition}
The subspace \(V \subseteq \mathbb{R}^m\) is said to be a \emph{\(\overline{\mathbb{Q}}\)-based subspace} if and only if
there exists a (possibly empty) collection of vectors \(w_1, \dotsc, w_t \in (\overline{\mathbb{Q}}\cap\mathbb{R})^d \) such that
\(V = \spn \{w_1,\dotsc,w_t\}\). 
\end{definition}

\begin{proposition}
\begin{enumerate}
\item
	The intersection of \(\overline{\mathbb{Q}}\)-based subspace of \(\mathbb{R}^m\) is a \(\overline{\mathbb{Q}}\)-based subspace. 
\item
	For each \(y\in \mathbb{S}^{m-1}\),  there exists a unique minimal \(\overline{\mathbb{Q}}\)-based subspace \(V_y\) such that \(y\in V_y\). 
\end{enumerate}
\end{proposition}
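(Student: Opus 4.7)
The plan is to set $k := \overline{\Q} \cap \R$ (the field of real algebraic numbers) and reduce everything to the following standard descent principle: an $\R$-subspace $V \subseteq \R^m$ is $\overline{\Q}$-based if and only if $V$ is cut out by a finite system of homogeneous linear equations with coefficients in $k$. The forward direction uses that if $V = \spn_{\R}\{w_1, \dotsc, w_t\}$ with $w_i \in k^m$, then $V_k := \spn_k\{w_1, \dotsc, w_t\} \subseteq k^m$ satisfies $\dim_k V_k = \dim_{\R} V$, because the rank of the matrix with columns $w_i$ is invariant under the extension $k \subseteq \R$ (being determined by vanishing of subdeterminants). Choosing $k$-linear functionals on $k^m$ that cut out $V_k$ then yields equations that cut out $V$ in $\R^m$. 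The reverse direction is Gaussian elimination over $k$, which produces a basis of the solution set lying in $k^m$.

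Given this reformulation, part (1) is immediate: an intersection of $\overline{\Q}$-based subspaces is the common zero set of the union of the corresponding systems of $k$-linear equations. Since $\dim \R^m = m$ is finite, this infinite system is equivalent to a finite subsystem (for instance, iteratively dropping redundant equations), so the intersection is itself the solution set of a finite $k$-linear system, hence $\overline{\Q}$-based.

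For part (2), the collection $\mathcal{V}_y$ of $\overline{\Q}$-based subspaces of $\R^m$ containing $y$ is nonempty, since $\R^m$ itself is $\overline{\Q}$-based (spanned by the standard basis). Pick $V \in \mathcal{V}_y$ of minimal dimension; this is possible because dimensions of elements of $\mathcal{V}_y$ are nonnegative integers. For any $W \in \mathcal{V}_y$, part (1) gives $V \cap W \in \mathcal{V}_y$, and since $V \cap W \subseteq V$, minimality forces $\dim(V \cap W) = \dim V$ and hence $V = V \cap W \subseteq W$. Therefore $V_y := V$ is the unique minimal element of $\mathcal{V}_y$.

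The only substantive (and very mild) obstacle is the descent lemma in the first paragraph, but this is classical: rank is preserved under field extension because it is characterised by vanishing of minors, a polynomial condition that is meaningful over both $k$ and $\R$. Everything else is a dimension count.
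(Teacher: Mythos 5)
Your proof is correct, and it takes a genuinely different route for part (1). The paper works directly with the spanning description: given bases of $V$ and $W$ in $(\overline{\Q}\cap\R)^m$, it forms the augmented matrix $(A \mid B)$, observes that vectors in $V\cap W$ correspond to elements of the nullspace of $(A\mid B)$, and extracts a $\overline{\Q}$-based basis of $V\cap W$ from a basis of that nullspace. You instead dualize: you first establish that being $\overline{\Q}$-based is equivalent to being the solution set of a homogeneous $k$-linear system with $k = \overline{\Q}\cap\R$, using the standard fact that rank is invariant under field extension (characterized by vanishing of minors). With this reformulation in hand, part (1) becomes immediate, since intersections of solution sets correspond to unions of equation systems, and any such system in $m$ variables is equivalent to a finite subsystem. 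Your version is a bit more general (it handles arbitrary intersections, not just pairwise) and replaces an explicit nullspace computation by the cleaner rank-descent lemma; the paper's version is more elementary and self-contained. For part (2) the arguments are essentially the same idea --- the paper argues that two minimal elements $V$, $W$ force $V\cap W$ to contradict minimality unless $V=W$, while you pick $V$ of minimal dimension and use part (1) to show $V \subseteq W$ for every competitor $W$. Your phrasing has the slight advantage of making the existence of a minimal element explicit, whereas the paper's argument establishes only uniqueness and leaves existence (finite descent on dimension) implicit.
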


\begin{proof}
\begin{enumerate}
\item
Suppose that \(V, W\) are \(\overline{Q}\)-based linear subspaces of \(\mathbb{R}^m\) with bases \(v_1,\dotsc, v_\ell \in \) and \(w_1,\dotsc, w_k\), respectively.
Let \(A\) and \(B\)
be matrices whose columns are the basis vectors \(v_1,\dotsc, v_{\ell}\)  and \(w_1,\dotsc, w_k\).
Then,  let \(\big(A\mid B\big)\) be the augmented matrix whose columns are the ones of \(A\) and \(B\).
  Since any element \(v\in V\cap W\) must satisfy \(v = A x = B y \) for some \(x\in \mathbb{R}^\ell\) and \(y\in \mathbb{R}^k\),  it follows that \(\big(A\mid B\big) \binom{x}{-y} = 0\). Thus we can deduce a basis for \(V\cap W\) from any basis for the nullspace of \((A\mid B)\). Since \((A\mid B)\) have entries in \(\overline{Q}\), we can easily find a basis for \(V\cap W\) in \((\overline{Q}\cap \mathbb{R})^m\).
\item
If \(V, W\) are minimal subspace containing \(y\), then \(y\in V\cap W\) and this will contradict the minimality of \(V\) and \(W\) unless \(V=W\).\qedhere
\end{enumerate}
\end{proof}

\begin{proof}[Proof of Proposition~\ref{lem : nonfree}]

Let \(f\colon \mathbb{S}^{n-1}\to \mathbb{S}^{m-1}\) be a weak Borel reduction from \(\mathcal{R}_n\) to \(\mathcal{R}_m\).  Let \(\mu = \mu_n\) be the spherical measure on \(\mathbb{S}^{n-1}\).  Put \(Y\coloneqq f^{-1}(\Fr \mathbb{S}^{m-1})\) and assume that \(\mu(Y)<1\), towards contradiction.
Clearly \(Y\)  is a \(\SO_n(\Q)\)-invariant Borel subset of \(\mathbb{S}^{n-1}\).  Since  
the action  \(\SO_n(\Q)\curvearrowright (\mathbb{S}^{n-1},\mu)\) is ergodic,  \(Y\) must be either null or conull, therefore \(\mu(Y)=0\) by the assumption of contradiction.

First, suppose that \(m=3\).
Let \(X_1 = \mathbb{S}^{n-1}\smallsetminus Y\). It follows that \(\mu(X_1) = 1\) and \(f(X_1)\) is contained in \(\mathbb{S}^2\smallsetminus \Fr(\mathbb{S}^2)\), which is the non-free part of the action \(\SO_3(\Q)\curvearrowright \Fr(\mathbb{S}^2)\).  Since every nontrivial \(A\in \SO_3(\mathbb{Q})\)  has a rotation axis,  it is clear that \(A\)  fixes exactly two points of \(\mathbb{S}^{2}\), the poles of the rotation axis,  thus \(f(X_1)\) is a countable set.  But then, since \(f\) is countable-to-one and \(\mu\) is non-atomic,  we have \(\mu(X_1)=\mu(f^{-1}(f(X_1))=0\),  a contradiction.

Next, suppose that \(m\geq 4\) and consequently \(n\geq 5\). Then put \(\Gamma = \SO_n\big(\mathbb{Z}\big[ \frac{1}{p}\big]\big) \) for some prime number \(p\equiv 1 \mod 4\), and note that \(f\) is a weak Borel reduction from \(\mathcal{R}(\Gamma\curvearrowright X)\) to \(\mathcal{R}_m\). Also, we assume without loss of generality that \(Y = \mathbb{S}^{n-1}\),  or equivalently \(f(\mathbb{S}^{n-1})\subseteq \mathbb{S}^{m-1}\setminus \Fr \mathbb{S}^{m-1}\).
Let \(F(\mathbb{R}^m)\) be the standard Borel space of linear vector space of \(\mathbb{R}^m\) with the Effros Borel structure.
Now,  for each \(x \in \mathbb{S}^{n-1}\), let \(V_x \in F(\mathbb{R}^m)\) be the unique minimal \(\overline{\mathbb{Q}}\)-based subspace such that \(f(x) \in V_x\); and consider the Borel map \( \mathbb{S}^{n-1} \to F(\mathbb{R}^m),  x\mapsto V_x\).
Since there are only countably many possibilities for \(V_x\),  there exists a Borel subset \(X_0 \subseteq \mathbb{S}^{n-1}\) with \(\mu(X_0) > 0\) and a fixed \(\overline{\mathbb{Q}}\)-based subspace \(V\) such that \(V_x = V\) for all \(x \in X_0\).  Let \(X_1 = \Gamma \cdot X_0\) be the saturation of \(X_0\).    Since \(\mu\) is ergodic and \(X_1\) is \(\Gamma\)-invariant,  it follows that \(\mu(X_1) = 1\). 
Clearly, \(V_x\) need not be constant for all \(x\in X_1\), however we can define
a Borel function \(c\colon X_1 \to X_1\) such that \(c(x) \in \Gamma \cdot x \cap X_0\) for all \(x \in X_1\).  Then,  for every \(x\in X_1\),  note that \(f\circ c(x)\) is equivalent to \(f(x)\), so \(f\circ c\) is also a weak Borel reduction from \(\mathcal{R}(\Gamma\curvearrowright X_1)\) to \(\mathcal{R}_m\).
Then, by replacing \(f\) with \(f' =f\circ c\),  we can assume that \(V_x = V\) for all \(x\in X_1\). 

Denote by \(\mathbb{P}(V)\) the standard Borel space of one-dimensional vector subspace of \(V\).
Consider the map \(\varphi\colon X_1\to \mathbb{P}(V), x\mapsto \spn\{f(x)\}\).
Let \(\PSO_m (\mathbb{Q})_{\{V\}}\) be  the setwise stabilizer of \(V\) in \(\PSO_m (\mathbb{Q})\).  (Recall that \(\PSO_n= \SO_n/  Z(\SO_n)\) where \(Z(\SO_n)=\{\pm I_n\}\) for \(n\) even and \(Z(\SO_n)=\{I_n\}\) for \(n\) odd.) To simplify our notation  we put \(Z=Z(\SO_m)\). Also, put \(G= \PSO_m(\mathbb{Q})\) and let \(H\) be the group of
projective linear transformations induced on \(V\) by  \(\PSO_m (\mathbb{Q})_{\{V\}}\). It follows that

\[
\dim H\leq \dim G =\frac{m(m-1)}{2}<\frac{n(n-1)}{2}.
\]

We consider the obvious action of \(H\) on \(\mathbb{P}(V)\).  That is, for any coset \(h = gZ \in H\) and \(v\in V\) we let \(h	\cdot \spn\{ v\} = \spn\{g\cdot v\}\).

\begin{lemma}
The map \(\varphi\) is a countable-to-one Borel homomorphism from \(\mathcal{R}(\Gamma\curvearrowright X_1)\) to \(\mathcal{R}(H\curvearrowright \mathbb{P}(V))\).
\end{lemma}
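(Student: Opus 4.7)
The plan is to verify three things about $\varphi$: that it is Borel, that it is a homomorphism, and that it is countable-to-one. The Borel measurability is immediate from the definition, since $\varphi$ factors as the Borel map $f$ followed by the continuous quotient map $V\setminus\{0\}\to \mathbb{P}(V)$, $v\mapsto\spn\{v\}$; recall that $f(X_1)\subseteq V$ by construction.

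For the homomorphism property, the key observation is that on $X_1$ the $\overline{\mathbb{Q}}$-based subspace $V_x$ is constantly equal to $V$. Suppose $x\mathbin{\mathcal{R}(\Gamma\curvearrowright X_1)}y$. Then in particular $x\mathbin{\mathcal{R}_n}y$, and since $f$ is a weak reduction from $\mathcal{R}_n$ to $\mathcal{R}_m$, there is some $A\in \SO_m(\mathbb{Q})$ with $A\cdot f(x)=f(y)$. Because $A$ has rational entries, $A$ sends every $\overline{\mathbb{Q}}$-based subspace to a $\overline{\mathbb{Q}}$-based subspace, and it preserves minimality. Hence $A\cdot V = A\cdot V_x$ is the minimal $\overline{\mathbb{Q}}$-based subspace containing $A\cdot f(x)=f(y)$, namely $V_y=V$. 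Thus $A\in \SO_m(\mathbb{Q})_{\{V\}}$, and its coset $AZ\in H$ satisfies
\[
AZ\cdot \varphi(x) = AZ\cdot \spn\{f(x)\} = \spn\{A\cdot f(x)\} = \spn\{f(y)\} = \varphi(y),
\]
so $\varphi(x)\mathbin{\mathcal{R}(H\curvearrowright \mathbb{P}(V))}\varphi(y)$.

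Finally, $\varphi$ is countable-to-one because for each line $\ell\in \mathbb{P}(V)$ the intersection $\ell\cap \mathbb{S}^{m-1}$ consists of at most two points, and $f$ itself is countable-to-one; therefore
\[
\varphi^{-1}(\ell) \subseteq f^{-1}(\ell\cap \mathbb{S}^{m-1})
\]
is a finite union of countable sets, hence countable. The only subtle point is the verification that $A\cdot V=V$, which rests squarely on the rationality of $A$ and the uniqueness/minimality of the associated $\overline{\mathbb{Q}}$-based subspace; once that is in hand the rest of the argument is routine.
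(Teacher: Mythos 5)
Your proof is correct and follows essentially the same approach as the paper: the Borel and countable-to-one parts are the routine observations the paper leaves implicit, and the homomorphism argument hinges, as in the paper, on the observation that a rational matrix carries $\overline{\mathbb{Q}}$-based subspaces to $\overline{\mathbb{Q}}$-based subspaces so that minimality forces $A\cdot V=V$. The only cosmetic difference is that you argue directly that $A$ maps the minimal $\overline{\mathbb{Q}}$-based subspace of $f(x)$ onto that of $f(y)$, whereas the paper intersects $V$ with $hV$ and invokes minimality; both are equivalent.
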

\begin{proof}
It is easily checked that \(\varphi\) is Borel and countable-to-one. To see that \(\varphi\) is a homomorphism
suppose that \(x,y\in X_1\) and \(\Gamma\cdot x = \Gamma \cdot y\). Then there exists \(g\in \SO_m(\mathbb{Q})\) such that \(g\cdot f(x)=f(y)\).  If \(h= gZ\in \PSO_m(\mathbb{Q})\),  we have \(h\cdot \varphi(x)=\varphi(y)\). To see that \(h\in H\),  note that \(hV\) is also a \(\mathbb{Q}\)-based vector space.  Therefore, \(f(y)\in V\cap hV\), which implies that \(hV = V\) because of minimality of \(V\).
\end{proof}

Now we consider the restriction of the action of \(H\) on
\(Y_1= H\cdot \varphi(X_1)\), i.e., the saturation of the image of \(\varphi\).

\begin{lemma}
The action of \(H\) on \(Y_1\)  is free.
\end{lemma}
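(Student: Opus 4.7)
The plan is to show that the stabilizer in $H$ of any point of $Y_1$ is trivial. Since $Y_1 = H \cdot \varphi(X_1)$ and stabilizers along an $H$-orbit are conjugate (and being trivial is preserved under conjugation), it suffices to show that for every $x \in X_1$, no nontrivial element of $H$ fixes $\varphi(x) = \spn\{f(x)\}$. So I fix $x \in X_1$ and $h \in H$ with $h \cdot \spn\{f(x)\} = \spn\{f(x)\}$, and aim to conclude $h = 1_H$.

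First I would lift $h$ through $\PSO_m(\mathbb{Q})_{\{V\}} \twoheadrightarrow H$ and then through $\SO_m(\mathbb{Q}) \twoheadrightarrow \PSO_m(\mathbb{Q})$ to obtain a representative $\tilde g \in \SO_m(\mathbb{Q})$ with $\tilde g V = V$. The stabilization hypothesis translates to $\tilde g \cdot f(x) = \lambda f(x)$ for some scalar $\lambda$, and since $\tilde g$ preserves the Euclidean norm and $f(x)$ is a unit vector, $\lambda = \pm 1$. Next I would consider the eigenspace $E_\lambda := \ker(\tilde g - \lambda I) \subseteq \mathbb{R}^m$; because $\tilde g$ has rational entries, $E_\lambda$ is cut out by linear equations over $\mathbb{Q}$ and is therefore a $\mathbb{Q}$-based (in particular, $\overline{\mathbb{Q}}$-based) subspace of $\mathbb{R}^m$.

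The key step is then to intersect: $W := E_\lambda \cap V$ is $\overline{\mathbb{Q}}$-based (being the intersection of two $\overline{\mathbb{Q}}$-based subspaces, by the proposition above), and it contains $f(x)$. By minimality of $V = V_x$ as the smallest $\overline{\mathbb{Q}}$-based subspace containing $f(x)$, this forces $V \subseteq W$, hence $W = V$. In other words $\tilde g$ acts on $V$ as multiplication by $\lambda \in \{\pm 1\}$, which is a scalar. Consequently the induced projective transformation of $V$ is trivial, i.e., $h = 1_H$, as desired.

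The main conceptual point, and the only spot that requires insight, is pairing the minimality of $V$ with the rationality of eigenspaces of $\tilde g$: the stabilization of a single line $\spn\{f(x)\}$ by $\tilde g$ a priori gives information only along that line, but minimality of $V$ propagates this information to all of $V$, upgrading ``$\tilde g$ scales $f(x)$'' to ``$\tilde g$ acts as a scalar on $V$.'' Everything else, the choice of lifts and the conjugation argument that extends freeness from $\varphi(X_1)$ to the full saturation $Y_1$, is routine bookkeeping.
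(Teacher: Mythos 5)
Your proof is correct and follows essentially the same approach as the paper's: you use the fact that the line $\spn\{f(x)\}$ lies in an eigenspace of a rational matrix representative, note that such an eigenspace is $\overline{\mathbb{Q}}$-based, and then invoke minimality of $V$ to promote scaling of a single line to scaling of all of $V$, killing the induced element of $\PGL(V)$. The intersection $E_\lambda \cap V$ you form is logically equivalent to the paper's direct use of the eigenspace $W$ (since $V \subseteq W$ follows at once from $f(x)\in W$ and minimality). The one genuine point of care you add, and which the paper leaves implicit, is the reduction from $Y_1 = H\cdot\varphi(X_1)$ to $\varphi(X_1)$ by conjugacy of stabilizers; the paper instead tacitly relies on the observation that for every $\spn\{y\}\in Y_1$ one still has $V_y = V$ because $H$ stabilizes $V$ setwise, so minimality applies directly to any point of $Y_1$. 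Either route is fine and they amount to the same thing.
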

\begin{proof}
If \(h=gZ\in H\) and \(h\cdot \spn\{x\} = \spn\{ x\}\),  then \(x\) is contained in the eigenspace \(W\) of \(g\) corresponding to some eigenvalue \(\lambda \in \mathbb{Q}\). By minimality of \(V\), we must have that \(V\subseteq W\) and so \(h(v)=\lambda v\) for all \(v\in V\).  Since \(\lambda = \pm 1\),  it follows that \(h\) is the identity in \(\PGL(V)\), therefore \(h\in 1_H\).
\end{proof}

Since \(H\) acts on \(Y_1\) freely,  we can define a cocycle \(\alpha\colon \Gamma \times X_1 \to H\) corresponding to the homomorphism \(\varphi\).  By Theorem~\ref{thm : Thomas},  the cocycle \(\alpha\) must be cohomologous to a cocycle \(\Gamma \times X_1 \to H\) with finite image.  Using the ergodicity of \(\SO_n\big(\Z\big[\frac{1}{p}\big]\big)\curvearrowright X_1\) and Lemma~\ref{lem : fin_subg},  it follows that
a measure \(1\) subset of \(X_1\) is mapped into a single equivalence class.
Then, since \(\mu\) is nonatomic,
 uncountably many elments of \(X_1\) are mapped into the same equivalence class, a contradiction.
\end{proof}

\begin{proof}[Proof of Theorem~\ref{thm : main}]
For \(m=2\),  Theorem~\ref{thm : main} follows easily from Theorem~\ref{thm : not treeable} and the fact that \(\mathcal{R}_2\) is hyperfinite and \(\mathcal{R}_n\) is not hyperfinite for \(n>m\). For \(m>2\), we combine the results of our last two sections. By Proposition~\ref{lem : nonfree} any Borel reduction from \(\mathcal{R}_n\) to \(\mathcal{R}_m\) is almost contained in the free part.  This leads to a contradiction arguing as in the proof of Theorem~\ref{thm : irreducibility free}.
 \end{proof}

\section{A cocycle reduction theorem}
\label{Section : added proof}

In this section we discuss the following cocycle reduction result for \(S\)-arithmetic groups.  It is the natural generalization of a result of Thomas \cite[Theorem~2.3]{Tho02} to the context of \(S\)-arithmetic groups.

\begin{theorem}
\label{thm : Thomas++}
Let \(S\) be a finite set of prime numbers. Let \(G\) be a connected semisimple \(\mathbb{Q}\)-group containing no connected normal \(\mathbb{Q}_p\)-subgroup of \(\mathbb{Q}_p\)-rank \(1\) for all \(p\in S\).
Suppose moreover that \(S\)-\(\rank G \geq 2\).
Denote by \(G_S =\prod_{p\in S\cup\{\infty\}} G(\mathbb{Q}_p)\).
Suppose that \(X\) is a standard Borel \(\Gamma\)-space with an invariant ergodic probability measure. 
Suppose that \(H\) is an algebraic \(\mathbb{Q}\)-group
such that \(\dim H < \dim G\) and that \(\Lambda < H(\mathbb{Q})\). Then for every Borel cocycle
\(\alpha \colon \Gamma \times X \to \Lambda\), there exists an equivalent cocycle \(\beta\) such that \(\beta(\Gamma\times X)\) is contained in a finite subgroup of \(\Lambda\).
\end{theorem}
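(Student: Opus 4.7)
The plan is to adapt Thomas's strategy for arithmetic lattices in real semisimple Lie groups \cite{Tho02} to the $S$-arithmetic setting, with Zimmer's cocycle superrigidity and Margulis' superrigidity (Theorem~\ref{Margulis : superrigidity}) as the main tools. First, since $\Lambda \subseteq H(\mathbb{Q})$, I would use the diagonal embedding $H(\mathbb{Q}) \hookrightarrow H_S := \prod_{p\in S\cup\{\infty\}} H(\mathbb{Q}_p)$ to regard $\alpha$ as a Borel cocycle into the lcsc group $H_S$, with $\Gamma$ (an $S$-arithmetic subgroup of $G$) sitting diagonally as a lattice in $G_S$. Because $S$-$\rank G \geq 2$ and by hypothesis $G$ has no connected normal $\mathbb{Q}_p$-subgroup of $\mathbb{Q}_p$-rank $1$ for any $p \in S$, the higher-rank $S$-arithmetic version of Zimmer's cocycle superrigidity applies, yielding a cohomologous cocycle $\beta(\gamma,x) = \rho(\gamma)\cdot u(\gamma,x)$, where $\rho\colon \Gamma \to H_S$ is the restriction of a continuous homomorphism on $G_S$ and $u$ takes values in a compact subgroup $K \leq H_S$.

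Next I would analyze $\rho$ place by place via Margulis' superrigidity, after lifting to the simply connected cover (Proposition~\ref{prop : covering}). For each $p \in S \cup \{\infty\}$, the projection $\rho_p\colon \Gamma \to H(\mathbb{Q}_p)$ either has relatively compact image or, modulo a bounded central twist, extends to a $\mathbb{Q}_p$-epimorphism $\hat\rho_p\colon \widetilde G \to H'_p$ for some Zariski-closed $H'_p \leq H$. The dimension hypothesis $\dim H < \dim G$, together with the semisimplicity of $G$, forces each such $\hat\rho_p$ to have a nontrivial connected kernel $N_p$; intersecting these over all $p$ produces a nontrivial connected normal $\mathbb{Q}$-subgroup $N \leq G$ on which $\rho$ is trivial up to compact corrections, and on which $\Gamma$ still has positive $S$-rank by the no-rank-$1$ hypothesis.

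Combining this with Margulis' normal subgroup theorem (Theorem~\ref{thm : nst}) applied to $\widetilde \Gamma$, one concludes that $\rho(\Gamma)$ is finite modulo a compact subgroup of $H_S$. Since $\beta$ must take values in $\Lambda \subseteq H(\mathbb{Q})$, and $H(\mathbb{Q})$ meets any compact subset of $H_S$ in a finite set (a standard consequence of the diagonal embedding and the rationality of $\Lambda$), the image $\beta(\Gamma \times X)$ lies in a finite subset of $\Lambda$. Applying Lemma~\ref{lem : fin_subg} together with ergodicity of $\Gamma \curvearrowright X$, one can then further modify $\beta$ on a conull $\Gamma$-invariant set so that its image is contained in a single finite subgroup of $\Lambda$, as required.

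The main obstacle will be the simultaneous handling of the archimedean and non-archimedean projections: applying Margulis naively place by place produces compact corrections that need not assemble into a single global coboundary, and reconciling them with the rationality of $\Lambda$ requires a careful interplay between the global lattice structure of $\Gamma$ in $G_S$ and the local superrigidity conclusions. Moreover, the precise $S$-arithmetic form of Zimmer's cocycle superrigidity needed here is typically stated for irreducible lattices in simple groups, so some technical work will be required to extract it under the semisimple no-rank-$1$-factor hypothesis of the theorem.
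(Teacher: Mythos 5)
Your plan tracks the right high-level tools (Zimmer's $S$-arithmetic cocycle superrigidity, Margulis superrigidity, Margulis' normal subgroup theorem, the finiteness lemma), but the final step contains a genuine error. You assert that $H(\mathbb{Q})$ meets any compact subset of $H_S = \prod_{p\in S\cup\{\infty\}} H(\mathbb{Q}_p)$ in a finite set. That is false for a finite $S$: the diagonal image of $H(\mathbb{Q})$ is dense (not discrete) in each non-archimedean factor $H(\mathbb{Q}_p)$, so compactness in $H_S$ does not force finiteness. Concretely, for $H = \SL_2$ and $S = \{p\}$, the infinite group $\SO_2\big(\mathbb{Z}\big[\tfrac{1}{q}\big]\big)$ (for any prime $q \neq p$) is contained in $\SO_2(\mathbb{R}) \times \SL_2(\mathbb{Z}_p)$, a compact subset of $H_S$, but is infinite. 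The crux is that $\Lambda < H(\mathbb{Q})$ may involve denominators at primes \emph{outside} $S$, and your argument provides no control at those places. Your intersection-of-kernels move in the second paragraph is also shaky: the kernels $N_p$ are $\mathbb{Q}_p$-subgroups of the simply connected cover, and a $\mathbb{Q}$-simple group may split into several $\mathbb{Q}_p$-simple factors, so the intersection need not be a nontrivial connected normal $\mathbb{Q}$-subgroup (and if $\widetilde G$ is $\mathbb{Q}_p$-simple, a nontrivial kernel already forces $\hat\rho_p$ trivial, so intersecting buys nothing).

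The paper handles this by a different two-stage structure. It first proves Theorem~\ref{lem : Thomas++}: a cocycle of $G_S \curvearrowright X$ into $T(k)$, where $k$ is an \emph{arbitrary} local field and $T$ a $k$-simple group with $\dim T < \dim G$, is cohomologous to one with precompact image. The proof of that lemma uses the Adams--Kechris minimality machinery (pass to a finite extension of $X$ and the smallest $k$-subgroup of $T$ to which the cocycle can be reduced), decomposes the resulting group through its solvable radical and $k$-simple quotients, applies Zimmer's $S$-arithmetic cocycle superrigidity factor by factor, and finishes by amenability plus the Kazhdan property of $G_S$. Only then does it feed this local-field lemma into Thomas's argument from \cite{Tho02}, which runs it over \emph{all} primes $q$ occurring in denominators of elements of $\Lambda$ (not just $q\in S$), together with the induced $G_S$-action on $X\times(G_S/\Gamma_S)$, to deduce that the cocycle can be taken into a finite subgroup of $\Lambda$. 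Without the "arbitrary local field" version of the lemma, and without the minimality/amenability steps, the argument does not close.
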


\begin{example}
\begin{enumerate}
\item
Whenever \(S=\emptyset\) and \(G=\SL_n\) for \(n\geq 3\), then \(\Gamma_S = \SL_n(\mathbb{Z})\) is an arithemtic group and we can recover \cite[Theorem~2.3]{Tho02}.
\item
When \(G=\SO_n\) for \(n\geq 5\) and \(S=\{p\}\) for some prime \(p\equiv 1 \mod 4\) we recover Theorem~\ref{thm : Thomas}. Note that \(\rank_{\mathbb{R}} \SO_n = 0\) because \(\SO_n(\mathbb{R})\) is compact and \(\rank_{\mathbb{Q}_p} \SO_n = \lfloor 
\frac{n}{2}
\rfloor\geq 2\).
\end{enumerate}
\end{example}

As for \cite[Theorem~2.3]{Tho02},  we can prove Theorem~\ref{thm : Thomas++} with the cocycle techniques introduced by Adams and Kechris~\cite{AdaKec}. We sketch a proof below.

Let \((X,\mu)\)be a standard Borel \(G\)-space with invariant measure \(\mu\).
A \emph{finite extension} for the action \(G\curvearrowright (X,\mu)\) is a standard Borel \(G\)-space \(\hat X\) together with and invariant measure \(\hat \mu\) and a finite-to-one map \(\pi\colon \hat X \to X\) such that:
\begin{enumerate-(i)}
\item
For all \(g \in G\) and for all \( x \in \hat X\), we have \(\pi(g \cdot x)= g\cdot \pi(x)\), 
\item
\(\pi_*(\mu)=\mu\).
\end{enumerate-(i)}

\begin{theorem}
\label{lem : Thomas++}
Let \(G_S\) be as in the statement of Theorem~\ref{thm : Thomas++}.
Let \(X\) be a standard Borel \(G_S\)-space with an invariant ergodic probability measure.  Let \(k\) be a local field and let \(T\) be a simple algebraic \(k\)-group such that \(\dim T < \dim G\). Then for every Borel cocycle \(\alpha\colon G_S \times X \to T(k)\), there exists an equivalent cocycle \(\alpha'\) such that \(\alpha'(G_S\times X)\) is contained in a compact subgroup of \(T(k)\).
\end{theorem}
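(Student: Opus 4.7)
The plan is to follow the strategy pioneered by Zimmer, and employed by Adams--Kechris and Thomas in analogous settings (cf.\ \cite[Theorem~2.3]{Tho02}): invoke Zimmer's cocycle superrigidity to reduce $\alpha$, up to cohomology, to a continuous group homomorphism, and then use the inequality $\dim T<\dim G$ together with the hypothesis that no normal $\mathbb{Q}_p$-subgroup of $G$ has $\mathbb{Q}_p$-rank $1$ to force that homomorphism to have relatively compact image.

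First I would perform the standard Zariski-closure reduction (see \cite[Chapter~3]{Zim}): after replacing $\alpha$ by an equivalent cocycle and restricting to a conull $G_S$-invariant Borel subset of $X$, we may assume the essential image of $\alpha$ lies in, and is Zariski dense in, some $k$-subgroup $L\leq T$. If $L$ is $k$-anisotropic then $L(k)$ is compact and there is nothing to prove, so assume $L$ is $k$-isotropic. Since $\rank_S G\geq 2$ and no normal $\mathbb{Q}_p$-subgroup of $G$ has $\mathbb{Q}_p$-rank $1$ for any $p\in S$, the $S$-adic form of Zimmer's cocycle superrigidity theorem (cf.\ \cite[Theorem~5.2.5]{Zim} and its extension by Margulis to $S$-adic groups) applies and produces a continuous homomorphism $\pi\colon G_S\to L(k)$ together with a compact subgroup $K$ of the centralizer of $\pi(G_S)$ in $L(k)$ such that $\alpha$ is cohomologous to a cocycle taking values in $\pi(G_S)\cdot K$. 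It therefore suffices to show that $\pi(G_S)$ is relatively compact in $T(k)$.

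To that end, I would analyze $\pi$ factor by factor, setting $\pi_p=\pi\restriction G(\mathbb{Q}_p)$ for each $p\in S\cup\{\infty\}$. When the local field $k$ has residue characteristic different from that of $\mathbb{Q}_p$, every neighborhood of the identity in $G(\mathbb{Q}_p)$ contains an open pro-$p$ subgroup whose image in $L(k)$ must lie in an arbitrarily small neighborhood of the identity; the classical ``no small subgroups'' dichotomy forces $\pi_p$ to have relatively compact image. In the compatible case, $\pi_p$ is a continuous homomorphism between the $\mathbb{Q}_p$-analytic groups $G(\mathbb{Q}_p)$ and $L(\mathbb{Q}_p)$, and by algebraic rigidity (Cartan in the archimedean case, the $p$-adic analogue of Borel--Tits otherwise) it is induced by a $\mathbb{Q}_p$-algebraic morphism $\hat\pi_p\colon G\to L$. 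Because $\dim L\leq\dim T<\dim G$ and each almost-simple factor of $G$ has $\mathbb{Q}_p$-rank $0$ or $\geq 2$, a dimension and rank count---combined with the Zariski density of $\pi(G_S)\cdot K$ in $L$---forces every simple factor of $G$ on which $\hat\pi_p$ is non-trivial to be $\mathbb{Q}_p$-anisotropic. Hence $\hat\pi_p(G)$ is $\mathbb{Q}_p$-anisotropic and $\pi_p(G(\mathbb{Q}_p))$ is relatively compact, so $\pi(G_S)=\prod_p \pi_p(G(\mathbb{Q}_p))$ is relatively compact as well.

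The main obstacle will be the careful formulation and application of the $S$-adic cocycle superrigidity step, since the target local field $k$ is allowed to be arbitrary and need not be $\mathbb{Q}_p$ for any $p\in S\cup\{\infty\}$; the necessary statement has to be assembled from the Margulis--Zimmer framework by working factor by factor on $G_S$. A secondary subtlety is that $G$ is only assumed semisimple rather than simple, so $\pi$ could \emph{a priori} be a nontrivial homomorphism on some strict subproduct of simple factors of $G_S$: the role of the no-rank-one-factor hypothesis is precisely to prevent such partial homomorphisms from having non-compact image once the dimension bound $\dim T<\dim G$ is imposed.
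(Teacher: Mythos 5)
Your overall strategy follows the same Adams--Kechris/Thomas template that the paper uses, but there is a genuine gap in the Zariski-closure reduction step. After that reduction you assume the essential image of $\alpha$ is Zariski dense in a $k$-subgroup $L\leq T$, and you split into the cases ``$L$ is $k$-anisotropic'' (where you declare $L(k)$ compact) and ``$L$ is $k$-isotropic'' (where you apply cocycle superrigidity). Neither branch is valid as stated, because $L$ need not be reductive. Even in a simple $T$, the Zariski closure of the image can be a parabolic, a unipotent radical, or any connected subgroup with nontrivial solvable radical. For such $L$, $\rank_k L = 0$ does not make $L(k)$ compact (e.g.\ $\mathbb{G}_a(k)=k$ is anisotropic but noncompact), and Zimmer's cocycle superrigidity theorem cannot be applied directly because it requires the target to be semisimple (or $k$-simple). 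So your dichotomy skips precisely the case that requires work.

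The paper's proof handles this by an extra layer that your proposal is missing: after pinning down a connected $k$-subgroup $H\subseteq T$ (via a minimality argument and a finite extension $\hat X$ of the $G_S$-space), it passes to $L=H/\Rad(H)$ and decomposes $L$ into $k$-simple factors $L_i$, applies Zimmer's Theorem~10.1.6 to each pushed-forward cocycle $\beta_i$, and obtains compactness in each $L_i(k)$. Pulling back, the relevant subgroup of $H(k)$ is then an extension of a compact group by the solvable $\Rad(H)(k)$, hence \emph{amenable} but not necessarily compact. The final --- and essential --- step is to use that $\widetilde G_S$ has Kazhdan's property~$(\mathrm{T})$ together with Zimmer's result (Theorem~9.1.1) that, for Kazhdan groups, a cocycle into an amenable group is cohomologous to one into a compact group. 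Your proposal never introduces the solvable radical, never encounters the amenable intermediary, and never invokes the property~$(\mathrm{T})$-to-compact reduction; consequently it does not actually establish relative compactness of the cocycle's image. A secondary omission is the lift to the universal cover $\widetilde G$, which the paper performs so that the cited forms of superrigidity (and the subsequent descent via \cite[Proposition~2.7]{AdaKec}) apply cleanly; your factor-by-factor analysis of $\pi$ also silently assumes that $L$ is algebraic-semisimple and that the homomorphism descends to a $k$-morphism even when $k$ is not among the $\mathbb{Q}_p$, which again is exactly what the cited $S$-adic Zimmer theorem packages for you.
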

\begin{proof}[Proof]
Let \(\widetilde G\) be the universal cover of \(G\) and \(\pi\colon \widetilde{G}\to G\) be the covering map. Clearly, \(\widetilde G\) is simply connected.  Arguing as in~\cite[Corollary~5.4]{Mar} we can prove that
\(\widetilde G_S= \prod_{p\in S\cup\{\infty\}} \widetilde G(\mathbb{Q}_p)\) has property \(\mathrm{(T)}\).

We lift \(\alpha\) to a cocycle \(\widetilde\alpha\colon \widetilde G_S\times X \to T(k)\) defined by \(\widetilde\alpha(g, x)=\alpha(\pi(g),x)\). In view of \cite[Proposition~2.7]{AdaKec} it suffices to show that \(\widetilde\alpha\) is equivalent to a cocycle taking value in a compact subgroup of  \(T(k)\).
Unfortunately, we cannot apply Zimmer's cocycle superrigidity theorem for \(S\)-arithmetic groups~\cite[Theorem~10.1.6]{Zim} directly because \(\widetilde \alpha\) may be equivalent to a cocycle taking value into a proper subgroup of \(T(k)\) of the form \(L(k)\) where \(L\) is a \(k\)-subgroup of \(T\). However we can argue by minimality as in \cite[Theorem~3.5]{AdaKec} and find a finite extension \(\hat X\) of \(X\), a connected \(k\)-subgroup \(H\subseteq T\) and a cocycle \(\beta\colon \widetilde G_S\times \hat X \to H(k)\subseteq T(k)\) such that \(\beta\) is cohomologous to the lift of \(\tilde \alpha\).  Note that \(H\) might not be semisimple, so, before applying \cite[Theorem~10.1.6]{Zim} let \(R\) be the solvable radical of \(H\),  then \(L= H/R\) is a connected semisimple \(k\)-group and the canonical projection \(\tau\colon H\to L\) is a \(k\)-epimorphism.

We can find connected \(k\)-simple \(k\)-groups \(L_1,\dotsc,L_d\) together with a canonical \(k\)-epimorphism \(\tau_*\colon L\to L_1\times\dotsb\times L_d\) with finite kernel.  For \(i=1,\dotsc,d\), denote by \(\tau_i\) the composition of \(\tau_*\) with the projection of \(L_1 \times \dotsb \times L_d\to L_i\).

\begin{center}
\begin{tikzcd}
H \arrow[r, "\tau"] & L \arrow[r,"\tau_*"] \arrow[rd, "\tau_i"]& L_1\times\dotsb\times L_d \arrow[d, ] \\
&& L_i
\end{tikzcd}
\end{center}

Next, let \(\beta_i\colon  G_S\times \hat X \to L_i(k)\) be the cocycle defined by \(\beta_i(g,x)=\tau_i(\tau(\beta(g,x)))\).  Arguing again by minimality as in \cite[Theorem~3.5]{AdaKec} we can assume that \(\beta_i\) is not taking value in
a proper \(k\)-subgroup of \(L_i\),
 so the hypothesis of \cite[Theorem~10.1.6]{Zim} are satisfied. We conclude that \(\beta_i(G_S\times \hat X)\) is contained in a compact subgroup \(K_i\) of \(L_i(k)\). Then \(\tau_*\circ \tau \circ \beta\) is equivalent to a cocycle taking values in \(K_1 \times \dotsb \times K_d\). 

By \cite[Proposition~2.4]{AdaKec},  there is \(\ell\in L_1\times \dotsb\times L_d\) and a cocycle \(\gamma\colon G_S\times \hat X \to L_1\times \dotsb\times L_l\) such that
\(K\coloneqq\gamma(G_S\times \hat X)\subseteq \ell(L_1\times \dotsb\times L_l)\ell^{-1}\) and
\(\gamma\) is cohomologous to \(\beta\).  Since \(K\) is compact, the group \(A=\tau^{-1}(\tau^{-1}_*(K))\) is ameanable and \(\beta\) is equivalent to a cocycle taking value in \(A\subseteq H(k)\).
Note that \(G_S\) is a Kazhdan group.  This follows from the assumption that
\(G\) contains no connected \(k_p\)-subgroup of \(k_p\)-rank \(1\) for \(p\in S\).
(e.g., see Margulis~\cite[Chapter~III, 5.4]{Mar}).  Then, it follows from \cite[Theorem~9.1.1]{Zim} that \(\beta\)
is equivalent to a cocycle taking values in a compact subgroup of \(H(k)\) as desired.
\end{proof}

\begin{proof}[Sketch of Theorem~\ref{thm : Thomas++}]
We can argue exactly as in the proof of \cite[Theorem~2.3]{Tho02} using Theorem~\ref{lem : Thomas++} in place of \cite[Theorem~6.1]{Tho02}. To this purpose it is essential to keep in mind that \(\Gamma_S\) is a lattice in \(G_S\), and for any standard Borel \(\Gamma\)-space \(X\), induces and action of \(G_S\) on \(Y=X\times (G_S/\Gamma_S)\).  Moreover, any  any strict cocycle \(\beta\colon \Gamma_S \times X \to H\) induces a corresponding cocycle \(\hat \beta\colon G_S \times Y \to H\).
\end{proof}

\section{Incomparable  CBERs via left-translation}

Adams and Kechris~\cite{AdaKec} found continuum many pairwise incomparable equivalence relations up to Borel reducibility. In particular, they analyzed the equivalence relations induced by the shift action of the groups \(\Gamma_S \coloneqq\SO_7\big( \Z\big[\frac{1}{S}\big] \big)\) on the free part of the space of the space \(2^{\Gamma_S}\) equipped with the product topology,  for different sets of prime numbers \(S\). Our proof of Theorem~\ref{thm : irreducibility free} implicitly gives a proof of another result of incomparability.

If \(S\) is a set of prime numbers,  then we let  \(\Gamma_{n,S} = \SO_{n}(\Z[\frac{1}{S}])\).  Whenever \(S=\{p\}\),  we write \(\Gamma_{n,p} = \Gamma_{n,\{p\}}\).  Also let \(\mathcal{R}_{n,S} = \mathcal{R}(\Gamma_{n,S}\curvearrowright \mathbb{S}^{n-1})\) and \(\mathcal{R}^*_{n,S}\) be the restriction of \(\mathcal{R}_{n,S}\) to the free part.

\begin{theorem}
For all \(n\geq 5\),  there are continuum many pair-wise inequivalent equivalence subrelation of \(\mathcal{R}_n\).
\end{theorem}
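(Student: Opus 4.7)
The plan is to produce continuum many pairwise incomparable subrelations of $\mathcal{R}_n$ indexed by an almost disjoint family of infinite sets of primes, imitating the incomparability argument implicit in Theorem~\ref{thm : irreducibility free}. Fix $p_0 = 5$ (or any prime $\equiv 1 \pmod 4$) and pick an almost disjoint family $\{S_i\}_{i \in 2^{\aleph_0}}$ of infinite sets of primes each containing $p_0$. For each $i$ set $\mathcal{R}^{*}_{n,S_i} = \mathcal{R}\big(\Gamma_{n,S_i} \curvearrowright \Fr \mathbb{S}^{n-1}\big)$, which is a subrelation of $\mathcal{R}_n$. By almost-disjointness, any two distinct $S_i, S_j$ satisfy $S_i \setminus S_j \neq \emptyset$ and $S_j \setminus S_i \neq \emptyset$, so it suffices to prove the following one-sided statement: if $S, T$ are infinite sets of primes containing $p_0$ and $S \setminus T \neq \emptyset$, then $\mathcal{R}^{*}_{n,S} \not\leq_B \mathcal{R}^{*}_{n,T}$.

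Assume towards contradiction that $f$ is such a reduction and fix $p \in S \setminus T$. Following the proof of Theorem~\ref{thm : irreducibility free}, the cocycle of $f$ lifts through the universal covering $\pi \colon \Spin_n \to \SO_n$ to a cocycle $\widetilde{\alpha} \colon \widetilde{\Gamma}_{n,S} \times \Spin_n(\R) \to \Gamma_{n,T}$ for the left-translation action. Since $p_0 \in S$, Theorem~\ref{thm : Margulis dense Kazhdan} guarantees that $\widetilde{\Gamma}_{n,S}$ is dense in $\Spin_n(\R)$ with Kazhdan's property $(\mathrm{T})$, and $\pi_1(\Spin_n)$ is trivial, so Theorem~\ref{thm : superrigidity} produces a group homomorphism $\varphi \colon \widetilde{\Gamma}_{n,S} \to \Gamma_{n,T}$ cohomologous to $\widetilde{\alpha}$. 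Because $\widetilde{\Gamma}_{n,S}$ is $S$-arithmetic in $\Spin_n$ with $S$-rank $\geq 2$, Margulis' normal subgroup theorem (Theorem~\ref{thm : nst}) yields one of two cases: either $[\widetilde{\Gamma}_{n,S} : \ker\varphi] < \infty$ or $\varphi$ is a virtual embedding. The former is handled exactly as in Theorem~\ref{thm : irreducibility free}, by passing to an ergodic component for $\ker\varphi$ and deriving that the adjusted map is essentially constant, contradicting countable-to-oneness.

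The main obstacle is the virtual-embedding case, where the dimension argument of Lemma~\ref{lem : Margulis} fails because the target $\SO_n$ has the same dimension as the source. The role of ``smaller dimension'' will be played by the prime $p \in S \setminus T$. I would apply Theorem~\ref{Margulis : superrigidity} prime-by-prime to the projections $\varphi_q \colon \widetilde{\Gamma}_{n,S} \to \SO_n(\mathbb{Q}_q)$ for the finitely many primes $q \in T$ appearing in denominators of $\varphi(\widetilde{\Gamma}_{n,S})$: when $q \notin S$, clause \ref{superrigidity : (a)} gives a relatively compact image; when $q \in S \cap T$ (finite by almost-disjointness), clause \ref{superrigidity : (b)} a priori permits an extension to a $\mathbb{Q}_q$-epimorphism, but the uniqueness of such extensions would force $\varphi$ to essentially coincide with the covering map $\pi$ restricted to $\widetilde{\Gamma}_{n,S}$, so $\varphi(\widetilde{\Gamma}_{n,S})$ would contain a finite-index subgroup of $\Gamma_{n,S}$—impossible since elements of $\Gamma_{n,S}$ may carry $p$ in their denominators whereas $\Gamma_{n,T} \subseteq \SO_n(\mathbb{Z}_p)$. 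Hence every $\varphi_q$ has relatively compact image; combined with the compactness of $\SO_n(\R)$ and the discreteness of $\Gamma_{n,T}$ as a lattice in $\SO_n(\R)\times \prod_{q \in T}\SO_n(\mathbb{Q}_q)$, this forces $\varphi(\widetilde{\Gamma}_{n,S})$ to be finite, contradicting the virtual-embedding assumption. The most delicate step to make rigorous will be the ``essentially coincides with $\pi$'' argument for $q \in S \cap T$, which requires careful book-keeping with the uniqueness clauses in Margulis' superrigidity.
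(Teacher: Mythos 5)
There is a genuine gap. Your cocycle argument starts from the cocycle $\widetilde{\alpha}\colon \widetilde{\Gamma}_{n,S}\times \Spin_n(\R)\to \Gamma_{n,T}$ and invokes Theorem~\ref{thm : superrigidity}, which requires the acting group to have Kazhdan's property~$(\mathrm{T})$. But for infinite $S$ (and your almost disjoint family must consist of infinite sets to have size continuum), the group $\Gamma_{n,S} = \SO_n\big(\Z\big[\frac{1}{S}\big]\big)$ is a strictly increasing union $\bigcup_k \SO_n\big(\Z\big[\frac{1}{p_1},\dotsc,\frac{1}{p_k}\big]\big)$ and hence is \emph{not} finitely generated; in particular $\widetilde{\Gamma}_{n,S}$ cannot have property~$(\mathrm{T})$. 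The claim that Theorem~\ref{thm : Margulis dense Kazhdan} ``guarantees'' this because $p_0\in S$ confuses density (inherited by overgroups) with property~$(\mathrm{T})$ (not inherited by overgroups), so the very first application of cocycle superrigidity is not justified. The same choice of acting group is what creates the downstream difficulty you flag with the primes $q\in S\cap T$ and case~\ref{superrigidity : (b)} of Margulis superrigidity, which you cannot close.

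The fix, and the paper's key move, is to \emph{not} work with $\Gamma_{n,S}$ at all: since $\mathcal{R}_{n,p}\subseteq\mathcal{R}_{n,S}$ for $p\in S\smallsetminus T$, a Borel reduction from $\mathcal{R}_{n,S}$ to $\mathcal{R}_{n,T}$ is automatically a countable-to-one Borel homomorphism from $\mathcal{R}_{n,p}$ to $\mathcal{R}_{n,T}$, i.e.\ a weak Borel reduction. Now the acting group is $\Gamma_{n,p}=\SO_n\big(\Z\big[\frac{1}{p}\big]\big)$, which is finitely generated and has property~$(\mathrm{T})$ by Theorem~\ref{thm : Margulis dense Kazhdan}, so the cocycle superrigidity machinery applies. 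Moreover, since $p\notin T$, every prime $q$ occurring in denominators of matrix entries of $\Gamma_{n,T}$ satisfies $q\neq p$, so $q\notin S_p=\{p,\infty\}$ and only clause~\ref{superrigidity : (a)} of Theorem~\ref{Margulis : superrigidity} is needed: each $q$-adic projection of the resulting homomorphism $\varphi\colon\pi^{-1}(\Gamma_{n,p})\to\Gamma_{n,T}$ has relatively compact image, exactly as in Lemma~\ref{lem : Margulis}, and one concludes that $\varphi(\pi^{-1}(\Gamma_{n,p}))$ is finite. This is cleaner than your proposal precisely because the ``essentially coincides with $\pi$'' step never arises. You would also still need the non-free-part case of Proposition~\ref{lem : nonfree} if you want the subrelations to be the unrestricted $\mathcal{R}_{n,S}$ (as the paper does), although your choice of restricting to the free part is a legitimate way to sidestep that.
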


\begin{proof}
Let \(\mathbb{P} =\{p\in \mathbb{N} \mid p\text{ is prime and }p\equiv 1 \mod 4 \}\). Let \(\mathcal{A} = \{S_x : x\in \mathfrak{c}\}\) be an almost disjoint family of subsets of \(\mathbb{P}\) of size the continuum.  I.e.,  if \(x\neq y\), then \(S_x\cap S_y \) is finite.
For distinct \(S,T\in\mathcal{A}\) and \(n\geq 5\),  we can prove that \(\mathcal{R}_{n,S}\) is not Borel reducible to \(\mathcal{R}_{n,T}\). To see this let
\(p\in S\smallsetminus T\).  Then any Borel reduction \(f\colon \mathbb{S}^{n-1}\to \mathbb{S}^{n-1}\) from \(\mathcal{R}_{n,S}\) to \(\mathcal{R}_{n,T}\) can be regarded as a weak Borel reduction from \(\mathcal{R}_{n,p}\) to \(\mathcal{R}_{n,T}\).  We are going to show that such weak Borel reduction cannot exist.

By ergodicity the image of \(f\) is almost contained in the free part or in the nonfree part.
In the first case,  we can argue as in the proof of Theorem~\ref{thm : irreducibility free}.
So let \(\alpha\colon \Gamma_{n,p}\times \mathbb{S}^{n-1} \to \Gamma_T\) be the associated cocycle.  Then, \(\alpha\) lifts to a cocycle \(\tilde \alpha\colon \pi^{-1}(\Gamma_{n,p}) \times \Spin_n(\mathbb{R}) \to \Gamma_{n,T}\) where \(\pi \colon \Spin_n \to \SO_n\) is the \(2\)--\(1\) covering map. Then the proof continues exactly as the one of Theorem~\ref{thm : irreducibility free}.  Note that \(\pi^{-1}(\Gamma_{n,p})\) has property \(\mathrm{(T)}\) and as in Lemma~\ref{lem : Margulis} every homomorphism from \(\pi^{-1}(\Gamma_{n,p})\) to \(\Gamma_{n,T}\) has finite image.

Otherwise,  suppose that the image of \(f\) is almost contained in the non-free part. Note that if \(f(x)\) is not in the free part of the action \(\Gamma_{n,T}\curvearrowright \mathbb{S}^{n-1}\), then there is \(X_0\subseteq \mathbb{S}^{n-1}\) with \(\mu(X_0)>0\) and there is a matrix \(M\in \Gamma_{n,T}\smallsetminus \{I,-I_n\}\) such that \(Mf(x)=f(x)\) for all \(x\in X_0\). (Here we can exclude \(-I_n\) if \(n\) is even because \(-I_n\) has no fixed points on \(\mathbb{S}^{n-1}\).)
 Therefore,  for every \(x\in X_0\), the point \(f(x)\) is contained in some proper \(\overline{\mathbb{Q}}\)-based vector space \(V_x\subseteq \mathbb{R}^{n}\).  Then the proof closely follows the one of Proposition~\ref{lem : nonfree}.  Let \(G, H\) be defined as in that proof.
 Since \(\dim V_x<n\),  it follows that \(\dim H< \dim G\).  Then,  by the same argument we conclude that a set of full measure is mapped into a single \(\mathcal{R}_{n,T}\)-class.
\end{proof}

\end{document}